\newtheorem{theorem}{Theorem}[section]
\newtheorem{lemma}[theorem]{Lemma}
\theoremstyle{definition}
\theoremstyle{remark}
\numberwithin{equation}{section}
\begin{document}

\title{KAM for Hamiltonian partial differential equations with weaker Spectral Asymptotics}
\author[Y. Li]{Yong Li}
\author[L. Xu]{Lu Xu}
\address{College of Mathematics, Jilin University,  Changchun 130012, People's Republic of China}
\email{liyong@jlu.edu.cn;luxujilin@yahoo.cn}
\thanks{This work supported by NSFC Grant No 10531050, National 973 Project of China No 2006CD805903,
SRFDP Grant No 20090061110001, and the 985 Project of Jilin
University.}

\subjclass[2010]{37K55;70H08;70K43}
\date{June 2011}
\keywords{KAM theory; infinite-dimensional Hamiltonian systems; weaker spectral asymptotics.}

\begin{abstract}
In this paper, we establish an abstract infinite dimensional KAM theorem dealing with normal frequencies in weaker
spectral asymptotics
\begin{eqnarray*}
\Omega_{i}(\xi)=i^d+o(i^{d})+o(i^{\delta}),
\end{eqnarray*}
where $d>0,~\delta<0$, which can be applied to a large class of Hamiltonian partial differential equations in high dimensions.
As a consequence, it is proved that there exist many invariant tori and thus quasi-periodic solutions for Schr\"{o}dinger equations, the Klein-Gordon equations with exponential nonlinearity and other equations of any spatial dimension.
\end{abstract}
\maketitle

\section{Introduction}
The KAM theory in infinite dimension requires the spectral asymptotics assumption that exhibits the asymptotic separation of the spectrum. However,
H. Weyl's theory shows that the spectrum of the eigenvalue problem
\begin{eqnarray*}
\triangle\phi=-\lambda\phi,~~on~\Omega
\end{eqnarray*}
and
\begin{eqnarray*}
\phi|_{\partial\Omega}\equiv0
\end{eqnarray*}
has the following asymptotic behavior as $k\longrightarrow\infty$,
\begin{eqnarray}\label{E1-1}
\lambda_k\sim C_m(\frac{k}{V})^{\frac{2}{m}},
\end{eqnarray}
where $\Omega$ is a bounded domain in $\textbf{R}^m$,
$V$ denotes the volume of $\Omega$, $C_m=(2\pi)^2B_m^{-\frac{2}{m}}$ is the Weyl constant with $B_m=$ volume of the unit $m$-ball.

When the dimension $m>2$, it is easy to see that $0<\frac{2}{m}<1$ from (\ref{E1-1}). As a result, the spectral separation disappears.
Thus, a natural question is how to deal with this kind of
weaker spectral asymptotics when we consider the existence of lower dimensional KAM tori which is shown for a class of nearly
integrable Hamiltonian systems of infinite dimension. This is correspondent how to prove that there exist many invariant tori
and thus quasi-periodic solutions for a class of Hamiltonian partial differential equations in higher dimension with weaker
spectral asymptotics
\begin{eqnarray*}
\Omega_{i}(\xi)=i^d+o(i^{d})+o(i^{\delta}),~~d>0.
\end{eqnarray*}

The KAM (Kolmogorov-Arnold-Moser) theory is a very powerful tool to find periodic or quasi-periodic solution for higher dimensional Hamiltonian PDEs.
There have been many remarkable results. For reader's convenience, we refer to Kuksin \cite{kuksin,kuksin1},
Craig and Wayne(\cite{craig,wayne}),
Bourgain \cite{bourgain1,bourgain2,bourgain3}, and P\"oschel \cite{posel1,posel2}.
The first breakthrough result is made by Bourgain \cite{bourgain3} who proved that the two dimensional nonlinear Schr\"{o}dinger equations admit
small-amplitude quasi-periodic solutions. Later he improved in \cite{B} his method and proved that
the higher dimensional nonlinear Schr\"{o}dinger and wave equations admit small-amplitude quasi-periodic
solutions.

Constructing quasi-periodic solutions of higher dimensional Hamiltonian PDEs by method
from the finite dimensional KAM theory appeared later. The breakthrough of constructing quasi-periodic
solutions for more interesting higher dimensional Schr\"{o}dinger equation by modified
KAM method was made recently by Eliasson-Kuksin. They proved in \cite{eliasson} that the higher dimensional
nonlinear Schr\"{o}dinger equations admit small-amplitude linearly-stable quasi-periodic
solutions. Recently, Geng et al.\cite{geng1} extended the Bourgain's existence result \cite{bourgain3,B1} to arbitrary finite dimensional invariant tori.
They also got a nice linear normal form, which
can be used to study the linear stability of the obtained solutions.

However, the corresponding KAM type theorems on higher dimensional Hamiltonian PDEs that need
infinite spectrum
\begin{eqnarray*}
\Omega(\xi)=(\Omega_{1}(\xi),\cdots,\Omega_i(\xi),\cdots)_{i\in
Z_{+}^{1}}
\end{eqnarray*}
is generally assumed to be separated as
$i\rightarrow\infty$, namely, there exits $d\ge 1,~\delta<0$ such
that
\begin{eqnarray}\label{jianjinxing}
\Omega_i(\xi)=i^d+o(i^{d})+o(i^{\delta}),~~~\xi\in\Pi~({\rm
a~bounded~ region~in~}{\bf R}^n).
\end{eqnarray}
Usually, at each KAM step, one must remove the frequencies not
satisfying Diophantine conditions. The spectral asymptotics
guarantee that there exists a Cantor set $\Pi_{\gamma}\subset\Pi$,
for all $\xi\in\Pi_{\gamma}$, the tori with frequency $\omega(\xi)$
survive and the measure $|\Pi\setminus\Pi_{\gamma}|\rightarrow0$ as
$\gamma\rightarrow0$.

Now what happens to the persistence for weaker
spectral asymptotics, due to the spectral in higher dimensional PDEs
might destroy the separation by Weyl's asymptotics formula. The
motivation leads to whether one can  relax the spectral asymptotics
condition to the case $d>0$ or not. In the present paper, just at
this weaker spectral asymptotics, we prove the Melnikov persistence
for infinite-dimensional Hamiltonian systems.

In the usual KAM formulism to PDEs, one can make the perturbation
$P$ getting smaller by only truncating the order of the angle variable $x$.
However, to deal with the weaker spectral asymptotics, we not only
truncate the angle variable $x$ but also truncate the normal
variables $z,~\bar{z}$. This results in that we are forced to modify
the usual Diophantine condition. According to this new difficulty,
we introduce a KAM iteration that is  similar to Nash-Moser
iteration and somewhat different from previous ones.

We consider the Hamiltonian system
\begin{equation}\label{yiweihamidun}
H=\langle\omega(\xi),y\rangle+\sum_{j\geq1}\Omega_{j}(\xi)z_j\bar{z}_j
+P(x,y,z,\bar{z};\xi),
\end{equation}
where $(x,y,z,\bar{z})$ lies in the complex neighborhood
$$D_{a,p}(s,r)=\{(x,y,z,\bar{z}):|{\rm Im}~x|<r, |y|<s^2,
|z|^{a,p},|\bar{z}|^{a,p}<s\}$$of ${\bf T}^{n}\times \{0\}\times
\{0\}\times \{0\}$, ${\bf T}^{n}$ is an $n$-torus, and
$a\ge0,~p\ge0$, with the norm $|z|^{a,p}=\sqrt{\sum_{i\ge
1}|z_i|^{2}i^{2p}{\rm e}^{2ai}}$. The frequencies $\omega$ and
$\Omega$ depend on $\xi\in\Pi\subset {\bf R}^{n}$, $\Pi$ is a closed
bounded region.

With respect to the symplectic form
$$
\sum_{i=1}^{n}{\rm d}x_{i}\wedge {\rm d}y_{i}+\sum_{j=1}{\rm
d}z_{j}\wedge {\rm d}\bar{z}_{j},
$$
the associated unperturbed motion is simply described by
$$
\left\{\begin{array}{ll}
\dot x & =  \omega(\xi),\\
\dot y & =  0,\\
\dot z & =\Omega(\xi)\bar{z},\\
\dot{\bar{z}}&=-\Omega(\xi)z.
\end{array}\right.
$$

Assume that:
\begin{itemize}
\item[{\bf A1)}] {\it Nondegeneracy.}~~The map $\xi\mapsto\omega(\xi)$ is a
homemorphism and Lipschitz continuous in both directions. Moreover,
for all integer vectors $(k,l)\in {\bf Z}^{n}\times {\bf
Z}^{\infty}$ with $1\leq|l|\leq 2$,
$$
meas\{\xi:~\langle k,\omega(\xi)\rangle+\langle
l,\Omega(\xi)\rangle=0\}=0
$$
and
$$
\langle l,\Omega(\xi)\rangle\neq 0,~~\forall\xi\in\Pi,
$$
where $|l|=\sum_{j\geq 1}|l_{j}|$ for integer vectors.
\item[{\bf A2)}] {\it Spectral Asymptotics.}~~The components $\Omega_i\ne\Omega_j$ if $i\ne j$,
and there exist $d>0$ and $\delta<0$ such that
$$
\Omega_{j}(\xi)=j^{d}+o(j^d)+o(j^{\delta}),~~~j\rightarrow\infty,
$$
where the dots stand for lower order terms in $j$, allowing also
negative exponents.

Moreover, \begin{eqnarray*} |\omega|^{\mathcal
{L}}_{\Pi}+|\Omega|^{\mathcal {L}}_{-\delta,\Pi}\leq M<\infty,
\end{eqnarray*}
where
$$
|\omega|_{\pi}^{\mathcal
{L}}=\sup_{\xi\ne\varepsilon}\frac{|\omega(\xi)-\omega(\varepsilon)|}{|\xi-\varepsilon|},
$$
$$
|\Omega|^{\mathcal
{L}}_{-\delta,\Pi}=\sup_{\xi\ne\varepsilon}\sup_{i\ge
1}\frac{|\Omega_i(\xi)-\Omega_i(\varepsilon)|i^{-\delta}}{|\xi-\varepsilon|},
$$
for all $\xi\in\Pi$.

\item[{\bf A3)}]{\it Regularity.}~~The perturbation $P\in{\mathcal
{F}}_{a,p}^{\bar{a},\bar{p}}$, that is, $P$ is real analytic on the
space coordinate and Lipschitz on the parameters, and for each
$\xi$, Hamiltonian vector space field
$X_{P}=(P_{y},-P_{x},P_{\bar{z}},-P_{z})^{T}$ defines a real
analytic map
$$
X_{P}:~{\mathcal {P}}^{a,p}\rightarrow{\mathcal
{P}}^{\bar{a},\bar{p}},
$$
where $\bar{a}>a>0,~\bar{p}\ge p\ge0$, and $\mathcal{P}^{a,p}=\textbf{T}^n\times\textbf{R}^n\times\mathcal{L}^{a,p}\times\mathcal{L}^{a,p}$.
\end{itemize}

Suppose that $H=N+P$ satisfies ${\rm A1)-A3)}$. Then we have

\vspace{0.5cm} \noindent{\bf Theorem A.} {\it For given $r,s,a>0$,
if there exits a sufficiently small
$\mu=\mu(a,p,\bar{p},r,s,n,\tau)>0$ (or equivalently
$\mu_*=\mu_*(a,p,\bar{p},r,s,n,\tau)>0$) such that the perturbation
$P$ satisfies
\begin{eqnarray*}
|X_{P}|_{\bar{a},\bar{p}}\le\gamma\mu,\\
|X_{P}|^{{\mathcal {L}}_{\bar{a},\bar{p}}}\le M\mu,
\end{eqnarray*}
for all $(x,y,z,\bar{z})\in D_{a,p}(r,s),~\xi\in\Pi$. Then there
exits a Cantor set $\Pi_{\gamma}\subset\Pi$ with
$|\Pi\setminus\Pi_{\gamma}|\rightarrow 0$, as $\gamma\rightarrow 0$,
and a family of $ C^{2}$ symplectic transformations
$$
\Psi_{\xi}:
D_{a+\frac{\bar{a}-a}{2},p}(\frac{r}2,\frac{s}2)\rightarrow D_{
a,p}(r,s), ~~\xi\in \Pi_{\gamma},
$$
which are Lipschitz continuous in parameter $\xi$ and $ C^{2}$
uniformly close to the identity, such that for each
$\xi\in\Pi_{\gamma}$, corresponding to the unperturbed torus
$T_{\xi}$, the associated perturbed invariant torus can be described
as
$$
\left\{\begin{array}{ll}
\dot x & =  \omega_{*}(\xi),\\
\dot y & =  0,\\
\dot z & =\Omega^{*}(\xi)\bar{z},\\
\dot{\bar{z}}&=-\Omega^{*}(\xi)z,
\end{array}\right.
$$
where
\begin{eqnarray}
|\omega_{*}-\omega|+\frac{\gamma}{M}|\omega_{*}-\omega|^{{\mathcal
{L}}}\leq c\mu_*,\nonumber\\
|\Omega^{*}-\Omega|+\frac{\gamma}{M}|\Omega^{*}-\Omega|_{-\delta}^{{\mathcal
{L}}}\leq c\mu_*.\nonumber
 \end{eqnarray}
The perturbation $P^{*}=P\circ\Phi_{\xi}$ is real analytic on
variables $x$, $C^{2}$ on variables $(y,z,\bar{z})$ Lipshchtz
continuous on the parameters, and
$$
X_{P^{*}}|_{(y,z,\bar{z})=\{0\}}=0,
$$
for all $x\in {\bf T}^{n},~\xi\in\Pi_{\gamma}$. Namely, the
unperturbed torus $T_{\xi}={\bf
T}^n\times\{0\}\times\{0\}\times\{0\}$ associated to the frequency
$\omega(\xi)$ and $z\bar{z}$-space frequency $\Omega(\xi)$ persists
and gives rise to an analytic, Diophantine, invariant torus of the
perturbed system with the frequency $\omega_{*}(\xi)$ and
$z\bar{z}-$space frequency $\Omega^{*}(\xi)$. Moreover, these
perturbed tori form a Lipshcitz continuous family. }

\vspace{0.5cm} \noindent {\bf Remark I} When the infinite spectrum
$\Omega$ is multiple,  we first give some notations.

For given $\rho\in N_+$, and
$\{\imath_1,\cdots,\imath_n\}\subset{\bf Z}^{\rho}$, denote
$Z_{1}^{\rho}={\bf Z}^{\rho}\setminus\{\imath_1,\cdots,\imath_n\}$,
and without loss of generality, set
$0\in\{\imath_1,\cdots,\imath_n\}$.

Throughout this paper, we set
$$
{\mathcal {L}}^{a,p}=\{z:~z=(\cdots,z_i,\cdots)_{i\in
Z_{1}^{\rho}}\}
$$
with the norm
\begin{equation}\label{norm1}
|z|^{a,p}=\sqrt{\sum_{i\in Z_{1}^{\rho}}|z_i|^{2}|i|^{2p}{\rm
e}^{2|i|a}},
\end{equation}
where $i=(i_1,\cdots,i_{\rho}),~|i|=|i_1|+\cdots+|i_{\rho}|$, for
fixed $a\ge0,p\ge0$. For $\rho=1$, we set $Z_{1}^{1}=\{1,2,\cdots\}$
and the corresponding norm is $|z|^{a,p}=\sqrt{\sum_{i\ge
1}|z_i|^{2}i^{2p}{\rm e}^{2ai}}.$

Consider the function family ${\mathcal
{F}}_{a,p}^{\bar{a},\bar{p}}$. Let $F: {\mathcal
{P}}^{a,p}\rightarrow {\bf R}$, where ${\mathcal
{P}}^{a,p}=\{(x,y,z,\bar{z})\in {\bf T}^{n}\times {\bf R}^{n}\times
{\bf \mathcal {L}}^{a,p}\times {\bf \mathcal {L}}^{a,p}\}$. We say
$F\in{\mathcal {F}}_{a,p}^{\bar{a},\bar{p}}$, if $F$ has the
following properties:
\begin{itemize}
\item [{\rm 1)}]  {\it F} is a real analytic function in
$(x,y,z,\bar{z})$ and Lipschitz continuous in parameter $\xi$;
 \item
[{\rm 2)}] For fixed $\xi$, $F$ can be expanded as:
$$
F(x,y,z,\bar{z},\xi)=\sum_{k\in {\bf Z}^{n},m\in{\bf
N}^{n},q,\bar{q}}F_{kmq\bar{q}}(\xi)y^{m}z^{q}\bar{z}^{\bar{q}}{\rm
e}^{\sqrt{-1}\langle k,x\rangle},
$$
where $q=(\cdots,q_i,\cdots)_{i\in
Z_{1}^{\rho},}~\bar{q}=(\cdots,\bar{q}_i,\cdots)_{i\in
Z_{1}^{\rho}}$ have finitely many non-zero positive components; for
the case $\rho=1$, we set $q=(q_1,q_2,\cdots)$. The term
$z^{q}\bar{z}^{\bar{q}}$ denotes
$\prod_{i}z_i^{q_i}\bar{z}_{i}^{\bar{q}_i}$;
\item [{\rm 3)}]  $X_{F}$ is finite in the following weighted norm:
\begin{eqnarray}
|X_{F}|^{\bar{a},\bar{p}}_{s,D_{a,p}(r,s)\times\Pi}&=&|F_{y}|_{D_{a,p}(r,s)\times\Pi}
+\frac{1}{s^{2}}|F_{x}|_{D_{a,p}(r,s)\times\Pi}\nonumber\\
&+&\frac{1}{s}|F_{\bar{z}}|^{\bar{a},\bar{p}}_{D_{a,p}(r,s)\times\Pi}
+\frac{1}{s}|F_{z}|^{\bar{a},\bar{p}}_{D_{a,p}(r,s)\times\Pi}\nonumber\\
&<& \infty,\nonumber
\end{eqnarray}
where $\bar{a},~\bar{p}$ are fixed positive numbers;
\item [{\rm
4)}]  The Lipschitz semi-norm
$$
|X_{F}|^{{\mathcal
{L}}_{\bar{a},\bar{p}}}_{s,D_{a,p}(r,s)}=\sup_{\xi\neq\varepsilon}
\frac{|\bigtriangleup_{\xi\varepsilon}X_{F}|^{\bar{a},\bar{p}}_{s,D_{a,p}(r,s)}}{|\xi-\varepsilon|}<\infty,
$$
where
$\bigtriangleup_{\xi\varepsilon}=X_{P}(\cdot,\xi)-X_{P}(\varepsilon,\cdot)$,
and the supremum is taken over $\Pi$.
\end{itemize}

Consider the following infinite dimensional Hamiltonian system with
small perturbation
\begin{eqnarray}
H=\langle\omega(\xi),y\rangle+\sum_{i\in
Z_{1}^{\rho}}\Omega_{i}(\xi)z_i\bar{z}_i
+P(x,y,z,\bar{z};\xi),\label{1.1}
\end{eqnarray}
where $(x,y,z,\bar{z})$ lies in the complex neighborhood
$$D_{a,p}(s,r)=\{(x,y,z,\bar{z}):|{\rm Im}~x|<r, |y|<s^2,
|z|^{a,p},|\bar{z}|^{a,p}<s\}$$of ${\bf T}^{n}\times \{0\}\times
\{0\}\times \{0\}$ with the norm (\ref{norm1}). The frequencies
$\omega$ and $\Omega$ depend on $\xi\in\Pi\subset {\bf R}^{n}$,
$\Pi$ is a closed bounded region.

With respect to the symplectic form
$$
\sum_{i=1}^{n}{\rm d}x_{i}\wedge {\rm d}y_{i}+\sum_{i\in
Z_{1}^{\rho}}{\rm d}z_{i}\wedge {\rm d}\bar{z}_{i},
$$
the associated unperturbed motion of {\rm(\ref{1.1})} is simply
described by
$$
\left\{\begin{array}{llll}
\dot x & =  \omega(\xi),\\
\dot y & =  0,\\
\dot z_{i} & =-\Omega_{i}(\xi)\bar{z}_{i},\\
\dot{\bar{z}}_{i}&=\Omega_{i}(\xi)z_i.
\end{array}\right.
$$
Hence, for each $\xi\in\Pi$, there is a family of invariant
$n$-dimensional torus ${\mathcal {T}}_{\xi}={\bf
T}^n\times\{0\}\times\{0\}\times\{0\}$ with fixed frequency
$\omega(\xi)$.

We also make the following assumptions for (\ref{1.1}).
\begin{itemize}
\item[{\bf A1)'}] {\it Nondegeneracy.}~~The map
$\xi\rightarrow\omega(\xi)$ is a  homemorphism between $\Pi$ and its
image, and Lipschtiz continuous in both directions.
\item[{\bf A2)'}] {\it Spectral Asymptotics.}~~There exist $d>0$ and
$\delta<0$ such that for all $i\in Z_{1}^{\rho}$,
\begin{eqnarray}
\Omega_{i}(\xi)&\ne&0,~~i\in Z_{1}^{\rho},~~\xi\in\Pi,\\
\Omega_{i}(\xi)&=&\bar{\Omega}_{i}+\tilde{\Omega}_i,~~\bar{\Omega}_{i}=|i|^{d}+o(|i|^{d}),~\tilde{\Omega}_i=o(|i|^{\delta}),
\end{eqnarray}
and
\begin{equation}
\bar{\Omega}_{i}-\bar{\Omega}_{j}=|i|^d-|j|^d+o(|j|^{-\delta}),~~|j|\le
|i|.
\end{equation}
Moreover, $\omega(\xi),~\Omega(\xi)$ are Lipschtiz continuous in
$\xi$, and

\begin{equation}\label{omega}
|\omega|^{\mathcal {L}}_{\Pi}+|\Omega|^{\mathcal
{L}}_{-\delta,\Pi}\leq M<\infty,
\end{equation}
where
\begin{eqnarray*}
|\omega|_{\pi}^{\mathcal
{L}}&=&\sup_{\xi\ne\varepsilon}\frac{|\omega(\xi)-\omega(\varepsilon)|}{|\xi-\varepsilon|},\\
|\Omega|^{\mathcal
{L}}_{-\delta,\Pi}&=&\sup_{\xi\ne\varepsilon}\sup_{|i|\ge
1}|\frac{\Omega_i(\xi)-\Omega_i(\varepsilon)||i|^{-\delta}}{|\xi-\varepsilon|}.
\end{eqnarray*}
\item[{\bf A3)'}]{\it Regularity.}~~The perturbation $P\in{\mathcal
{F}}_{a,p}^{\bar{a},\bar{p}}$, that is, $P$ is real analytic in the
space coordinate and Lipschitz in the parameters, and for each
$\xi$, Hamiltonian vector space field
$X_{P}=(P_{y},-P_{x},P_{\bar{z}},-P_{z})^{T}$ defines a real
analytic map
$$
X_{P}:~{\mathcal {P}}^{a,p}\rightarrow{\mathcal
{P}}^{\bar{a},\bar{p}},
$$
where $\bar{a}>a>0,~\bar{p}\ge p\ge0$.
\item[{\bf A4)'}]{\it Special form of the perturbation.}~~The
perturbation $P\in{\mathcal {A}}$ where
$$
{\mathcal {A}}=\{P\in {\mathcal
{F}}_{a,p}^{\bar{a},\bar{p}}:~P_{kmq\bar{q}}=0
{\rm~if~}\sum_{j=1}^{n}k_j{\imath}_j+\sum_{i\in
Z_{1}^{\rho}}(q_{i}-\bar{q}_i)i\ne0\}.
$$
\end{itemize}

 \vspace{0.5cm}
\noindent{\bf Theorem  A'.} {\it Suppose that (\ref{1.1}) satisfies
 ${\rm A1)'-A4)'}$. For given $r,s>0$, if there exits a sufficiently small
$\mu=\mu(a,p,\bar{p},r,s,n,\tau)>0$, or equivalently
$\mu_*=\mu_*(a,p,\bar{p},r,s,n,\tau)>0$, such that
\begin{eqnarray}
&~&|X_{P}|^{\bar{a},\bar{p}}\leq \gamma\mu,\label{xp}\\
&~&|X_{P}|^{{\mathcal {L}}_{\bar{a},\bar{p}}}\leq M\mu,\label{xpl}
\end{eqnarray}
for all $(x,y,z,\bar{z})\in D_{a,p}(r,s),~\xi\in\Pi$. Then there
exit a Cantor set $\Pi_{\gamma}\subset\Pi$ with
$|\Pi\setminus\Pi_{\gamma}|\rightarrow 0$, as $\gamma\rightarrow 0$,
and a family of $ C^{2}$ symplectic transformations
$$
\Psi_{\xi}: D_{a+\frac
{\bar{a}-a}{2},p}(\frac{r}2,\frac{s}2)\rightarrow D_{a,p}(r,s),
~~\xi\in \Pi_{\gamma},
$$
which are Lipschitz continuous in parameter $\xi$ and $ C^{2}$
uniformly close to the identity, such that for each
$\xi\in\Pi_{\gamma}$, corresponding to the unperturbed torus
$T_{\xi}$, the associated perturbed invariant torus can be described
as
$$
\left\{\begin{array}{llll}
\dot x & =  \omega_{*}(\xi),\\
\dot y & =  0,\\
\dot z_{i} & =-\Omega^{*}_{i}(\xi)\bar{z}_{i},\\
\dot{\bar{z}}_{i}&=\Omega^{*}_{i}(\xi)z_i,
\end{array}\right.
$$
where
\begin{eqnarray}
&~&|\omega_{*}-\omega|+\frac{\gamma}{M}|\omega_{*}-\omega|^{{\mathcal
{L}}}\leq c\mu_*,\nonumber\\
&~&|\Omega^{*}-\Omega|_{-\delta,\Pi_{\gamma}}
+\frac{\gamma}{M}|\Omega^{*}-\Omega|_{-\delta,\Pi_{\gamma}}^{{\mathcal
{L}}}\leq c\mu_*.\nonumber
 \end{eqnarray}
Moreover the perturbation $P^{*}=P\circ\Phi_{\xi}$ is real analytic
in phase variables $x$, $C^2$ in variables $(y,z,\bar{z})$,
Lipshchtz continuous in the parameters, and
$$
X_{P^{*}}|_{(y,z,\bar{z})=\{0\}}=0,
$$
for all $x\in {\bf T}^{n},~\xi\in\Pi_{\gamma}$. Namely, the
unperturbed torus $T_{\xi}={\bf
T}^n\times\{0\}\times\{0\}\times\{0\}$ associated to the frequency
$\omega(\xi)$  persists and gives rise to an analytic, Diophantine,
invariant torus of the perturbed system with the frequency
$\omega_{*}(\xi)$. Moreover, these perturbed tori form a Lipshcitz
continuous family. }

The first three ones are the same as $A1)-A3)$, and the fourth one
is introduced in \cite{geng}. The Hamiltonian systems deriving from
the PDEs in high dimension not containing explicitly the space and
the time variables do have the special form.  A4)'  make the measure
estimates simpler, and  for the non-multiple case, A4)' is not
needed as in Theorem A.

\vspace{0.5cm}\noindent{\bf Remark II}~ The case $d\ge 2,~\rho>1$ or
$d>1,~\rho=1$ has been proved in \cite{posel1}.
According to the weaker spectral asymptotics, we consider the
following modified Diophantine condition :
\begin{eqnarray}\label{dufantu}
\{\xi:|\langle k,\omega(\xi)\rangle+\langle l,\Omega(\xi)\rangle|\ge
\frac{\gamma}{ A_{k,l}},~(k,l)\in{\mathcal {Z}}\}.
\end{eqnarray}
For details,
\begin{eqnarray}
&~&|\langle
k,\omega(\xi)\rangle|\ge\frac{\gamma}{1+|k|^{\tau}},~~\xi\in\Pi,~~|l|=0,\nonumber\\
&~&|\langle k,\omega(\xi)\rangle+\langle
l,\Omega(\xi)\rangle|\ge\frac{\gamma\langle
l\rangle_{d}}{1+|k|^{\tau}},
~~\xi\in\Pi,~~l\in\Lambda_+,\nonumber\\
&~&|\langle k,\omega(\xi)\rangle+\langle
l,\Omega(\xi)\rangle|\ge\frac{\gamma}{|i|^{c(\rho)}(1+|k|^{\tau})},
~~\xi\in\Pi,~l\in \Lambda_-, \nonumber
\end{eqnarray}
where $l\in\Lambda_-$ denotes that $l$ contains two opposite
components, $\Lambda_+$ denotes others expect $|l|=0$.  And $i$ is
the site of the positive components of $l$,
 $c(\rho)\ge c_1(\rho)+\rho$, $\tau\ge n+\frac
{c(\rho)+2}{d}+4$, $c_1(\rho)$ is constant only dependeding on
$\rho$,~and $\langle l\rangle_{d}=|\sum_{i} l_i|i|^{d}|$. When
$\rho=1$, we set $c(\rho)=\frac 52$ and for $l\in\Lambda_-$, we have
$$
|\langle k,\omega(\xi)\rangle+\langle
l,\Omega(\xi)\rangle|\ge\frac{\gamma}{|i|^{\frac52}(1+|k|^{\tau})},
~~\xi\in\Pi,~l\in \Lambda_-.
$$

One can find, when $l\in\Lambda_{-}$, the frequencies satisfying the
modified Diophantine condition are more than before and the the
other case is the same. The definition is intrinsical. Roughly
speaking, the parameter $\frac{\gamma}{A_{k,l}}$ describes the
measure of the removed region for fixed $(k,l)$, and the spectral
asympotics describe the cardinal number of the  eliminated regions
for fixed $k$. Since the spectrum can't be separable, the cardinal
number of the removed regions get more, naturally, the measure of
the removed region must get smaller to guarantee the measure
$|\Pi\setminus\Pi_{\gamma}|\rightarrow0$ as $\gamma\rightarrow0$.

\vspace{0.5cm} \noindent{\bf Remark III} ~In the following, one can
find that we only need a decreasing sequence
$\{a\}_{\nu=0}^{\infty}$ such that
$$
a< a+\frac{\bar{a}-a}{2}\leq a_{\nu+1}<a_{\nu}<\bar{a},
$$
for all $\nu=0,1,\cdots$. Without loss of generality,  we change A3)
to
$$
X_{P}:~{\mathcal {P}}^{\frac{a}{4},p}\rightarrow{\mathcal
{P}}^{a,\bar{p}},
$$
where $0<a<\bar{a},~0\le p\le\bar{p}$ for simplicity.

This  paper is organized as follows. The section 2 produces the KAM steps where we
introduce a new diophantine conditions and truncate the normal
variables $z$ and $\bar{z}$. In section 3, we derive an iteration
Lemma. And in section 4, we give measure estimates and complete the
proof of the main theorem. In the final section, we apply our results to nonlinear
Schr\"{o}dinger equations and the Klein-Gordon equations with exponential nonlinearity, which possess weaker spectral asymptotics. Then we obtain the existence of quasi-periodic solutions for both
the Klein-Gordon equations with exponential nonlinearity and Schr\"{o}dinger equations in higher dimension.

\section{KAM steps}
 In this section, we outline an iterative
scheme for the Hamiltonian (\rm \ref{yiweihamidun}) ( and
(\ref{1.1})) in one KAM cycle, say, from
 $\nu$th KAM step to the $(\nu+1)$th-step.
Set
\begin{eqnarray}
  &&r_0=r,~\gamma_0=\gamma,~\mu_{0}=\mu,~M_{0}=M,\Pi_{0}=\Pi,\nonumber\\
  &&\omega_{0}=\omega,~\Omega_{0}=\Omega,~,H_{0}=H,~a_0=a,\nonumber\\
&&N_0=\langle\omega(\xi),y\rangle+\sum_{i\in
Z_{1}^{\rho}}\Omega_{i}(\xi)z_i\bar{z}_i,~P_0=P,\nonumber
\end{eqnarray}
where $H_{0}$ satisfies the assumptions ${\rm A1)-A3)}$ for Theorem
A (or A1)'-A4)' for Theorem A')  for all $(x,y,z,\bar{z})\in
D_{\frac a4,p}(r_{0},s_{0}),~\xi\in\Pi_{0}.$ As usual, we construct
a sympletic transformation
$$
\Phi_{1}:~D_{\frac a4,p}(r_{1},s_{1})\times\Pi_{1}\rightarrow
D_{\frac a4,p}(r_{0},s_{0})\times\Pi_{1}
$$
where $r_{1}<r_{0},s_{1}<s_{0}$. Our aim is to make the perturbation
much smaller on a smaller domain $D_{\frac
a4,p}(r_{1},s_{1})\times\Pi_1$, which will be defined clearly below.

 We also have a sequence of symplectic transformations and domains as
 follows
$$
\Phi_{i}:~D_{\frac a4,p}(r_{i},s_{i})\times\Pi_{i}\rightarrow
D_{\frac a4,p}(r_{i-1},s_{i-1})\times\Pi_{i}
$$
for all $i=1,\cdots,\nu$. And we arrive at the following real
analytic Hamiltonian:
 \begin{eqnarray}
 H_{\nu}&=&H\circ\Phi_{0}\circ\Phi_{1}\circ\cdots\circ\Phi_{\nu}=N_{\nu}+P_{\nu},\label{h}\\
 N_{\nu}&=&e_{\nu}(\xi)+\langle\omega_{\nu}(\xi),y\rangle+
\sum_{n\in Z_{1}^{\rho}}\Omega_{i}(\xi)z_i\bar{z}_i,\label{n}
 \end{eqnarray}
 where
 \begin{eqnarray}
&~&|X_{P_{\nu}}|^{a_{\nu},\bar{p}}\leq
\gamma_{\nu}\mu_{\nu},~~|X_{P_{\nu}}|^{{\mathcal
{L}}_{{a_{\nu},\bar{p}}}}\leq
M_{\nu}\mu_{\nu}, \label{xpnu}\nonumber\\
&~&|\omega_{\nu}|^{{\mathcal {L}}}_{\Pi}+|\Omega_{\nu}|^{{\mathcal
{L}}}_{-\delta,\Pi}\leq M_{\nu}<\infty,\nonumber\\
\end{eqnarray}
for all $(x,y,z,\bar{z})\in D_{\frac a4,p}(r_{\nu},s_{\nu})$ and
$\xi\in\Pi_{\nu}\subset\Pi_0.$

In the following, we try to find a symplectic transformation
$$
\Phi_{\nu+1}:D_{\frac
a4,p}(r_{\nu+1},s_{\nu+1})\times\Pi_{\nu+1}\rightarrow D_{\frac
a4,p}(r_{\nu},s_{\nu})\times\Pi_{\nu+1},
$$
such that  $P_{\nu+1}$ is the new perturbation with the similar
estimates on $D_{\frac a4,p}(r_{\nu+1},s_{\nu+1})\times\Pi_{\nu+1}$.



 For simplicity, we shall omit index for all
quantities of the present KAM step (the $\nu$th-step) and index all
quantities (Hamiltonian, normal form, perturbation, transformation,
and domains, etc) in the next KAM step (the $(\nu+1)$-th step) by
$``+"$. All constants $c_i, c$ below are positive and independent of
the iteration process. To simplify the notations, we shall omit the
scripts like $a,~\bar{p},~r,~D(r,s)$, and only mark the changes.

Define
\begin{eqnarray}
r_+&=&\frac r2+\frac {r_0}4,\nonumber\\
a_+&=&\frac{a}{2}+\frac{a_{0}}4,\nonumber\\
\gamma_+&=&\frac\gamma 2+\frac{\gamma_0}4,\nonumber\\
M_{+}&=&\frac{M}{2}+M_{0},\nonumber\\
s_+&=&\eta s,~~\eta=\mu^{\frac{1}{3}},~~\mu=s^{\frac12},\nonumber\\
K_+&=&([\log\frac{1}{\mu}]+1)^{3\alpha_1},~~I_+=([\log\frac{1}{\mu}]+1)^{3\alpha_2},\nonumber\\
D_{\frac a4,p;~i}&=& D_{\frac a4,p}(r_++\frac{i-1}{8}(r-r_+),i\eta
s),~~
i=1,2,\cdots,8,\nonumber\\
D(\beta)&=&\{y\in C^d: |y|<\beta\},~~~\beta>0,\nonumber\\
\hat D_{\frac a4,p}(\beta)&=&D_{\frac a4,p}(r_++\frac{7}{8}(r-r_+),\beta),~~~\beta>0,\nonumber\\
\tilde D_{\frac a4,p}(\beta)&=&D_{\frac a4,p}(r_++\frac34(r-r_+),\frac \beta2),~~~\beta>0,\nonumber\\
D_+&=&D_{\frac a4,p;~1},~~D=D(r,s),\nonumber\\
 \Pi_{+}&=&\{ \xi\in \Pi: |\langle k,\omega(\xi)\rangle+\langle
l,\Omega(\xi)\rangle|>\frac{\gamma}{A_{k,l}}\},\nonumber\\
\Gamma(r-r_+)&=&\sum_{0<|k|\le K_+}|k|^{4\tau+4} {\rm
e}^{-|k|\frac{r-r_+}{8}},
\nonumber\\
C(a-a_+)&=&r\sum_{i\in Z_{1}^{\rho}}|i|^{2c(\rho)+2}{\rm
e}^{-(a-a_+)|i|},\nonumber
\end{eqnarray}
where $\alpha_1,~\alpha_2$ are constants to be specified.

\subsection{Truncation}
\noindent{}

We write  $P$ in the Taylor-Fourier series:
$$
P(x,y,z,\bar{z})=\displaystyle\sum_{m,q,\bar{q}}\displaystyle\sum_{k\in
Z^n}P_{kmq\bar{q}}y^{m}z^{q}{\bar{z}}^{\bar{q}} e^{\sqrt{-1}\langle
k,x\rangle},
$$
where $k\in{\bf Z}^{n},~m\in{\bf N}^{n}$, $q,~\bar{q}$ are
multi-index infinite vectors with finitely many nonzero components
of positive integers.

 We take $K_+,~I_+$ such that
\begin{itemize}
\item[{\rm\bf H1)}]$\displaystyle\int^\infty_{K_+}t^{n+4}
 e^{-t\frac{r-r_+}{16}}{\rm
d}t\leq \mu$,
\item[{\rm\bf H2)}]$\displaystyle\int^\infty_{I_+}t^{\rho+4}
 e^{-t(a-a_+)}{\rm
d}t\leq \mu$.
\end{itemize}
Then, in a usual manner  we have

\begin{lemma}
 $\forall~(m,q,\bar{q})$, $\xi\in\Pi_{0}$,
 there is a constant $c_{1}$ such that
\begin{eqnarray}
|X_{P-R}|^{a,\bar{p}}_{7\eta s,D_{\frac a4,p;7}}\leq
c_{1}\gamma\mu^{2},~~|X_{R}|^{a,\bar{p}}_{r,D}\leq \gamma\mu,\nonumber\\
|X_{P-R}|^{{\mathcal {L}}_{a,\bar{p}}}_{7\eta s,D_{\frac
a4,p;7}}\leq c_{1} M\mu^{2},~~|X_{R}|^{{\mathcal
{L}}_{a,\bar{p}}}_{r,D}\leq M\mu,\nonumber
\end{eqnarray}
where $R=R^{0}+R^{1}+R^{2}$ as follows:
\begin{eqnarray*}
R^{0}&=&\sum_{0<|k|\leq K_{+},|m|\le1}P_{km00}~y^{m}{\rm
e}^{\sqrt{-1}\langle
k,x\rangle},\\
R^{1}&=&\sum_{|k|\leq K_{+}}(\langle P_{k10},z\rangle+\langle
P_{k001},\bar{z}\rangle){\rm
e}^{\sqrt{-1}\langle k,x\rangle}\\
&=&\sum_{k,i}P^{k10}_{i}z_i{\rm e}^{\sqrt{-1}\langle
k,x\rangle}+\sum_{k,i}P^{K01}_{i}\bar{z}_i{\rm
e}^{\sqrt{-1}\langle k,x\rangle},\\
R^{2}&=&\sum_{|k|\leq K_{+}}(\langle P_{k020}~z,z\rangle+\langle
P_{k011}~z,\bar{z}\rangle+\langle
P_{k002}~\bar{z},\bar{z}\rangle){\rm
e}^{\sqrt{-1}\langle k,x\rangle}\\
&=&\sum_{|k|+||i|-|j||\ne0}P^{k20}_{ij}z_iz_j{\rm
e}^{\sqrt{-1}\langle
k,x\rangle}+\sum_{|k|+||i|-|j||\ne0}P^{k02}_{ij}\bar{z}_i\bar{z}_j{\rm
e}^{\sqrt{-1}\langle k,x\rangle}\\
&~&+ \sum_{|k|+||i|-|j||\ne0,|i|\le I_+}P^{k11}_{ij}z_i\bar{z}_j{\rm
e}^{\sqrt{-1}\langle k,x\rangle},
\end{eqnarray*}
and $P^{k10}_{i}=P_{k0q\bar{q}}$ with $q=e_{i},\bar{q}=0$;
$P^{k01}_{i}=P_{k0q\bar{q}}$ with $q=0,\bar{q}=e_{i}$;
$P^{k11}_{ij}=P_{k0q\bar{q}}$ with $q=e_{i},\bar{q}=e_{j}$;
$P^{k20}_{ij}=P_{k0q\bar{q}}$ with $q=e_{i}+e_{j},\bar{q}=0$;
$P^{k02}_{ij}=P_{k0q\bar{q}}$ with $q=0,\bar{q}=e_{i}+e_{j}$; here
$e_{i}$ denotes the vector with the $i$-th component being 1 and
other components being zero.
\end{lemma}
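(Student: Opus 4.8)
\noindent The plan is to run the standard KAM truncation estimate, the only genuinely new feature being that the truncation is carried out not only in the angular Fourier index $k$ but also in the site $|i|$ of the normal modes $z,\bar z$, as dictated by the modified Diophantine condition forced by the weak spectral asymptotics.

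First I would record, once and for all, the quantitative decay of the Taylor--Fourier coefficients of $P$. By A3)$'$ together with Remark III, $X_P$ is real analytic as a map $\mathcal P^{\frac a4,p}\to\mathcal P^{a,\bar p}$ with $|X_P|^{a,\bar p}\le\gamma\mu$ on $D=D_{\frac a4,p}(r,s)$; Cauchy's estimate in $x$ on the strip $|{\rm Im}\,x|<r$ and in $(y,z,\bar z)$ on the ball of radius $s$ then yields weighted bounds for the coefficients $P_{kmq\bar q}$ that decay like ${\rm e}^{-|k|r}$ in the angular index and, because the domain weight $\frac a4$ is strictly smaller than the target weight $a$, also carry an exponentially small factor in the sites occurring in $q,\bar q$. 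These are exactly the bounds used in the classical separated-spectrum case. The estimate $|X_R|^{a,\bar p}_{r,D}\le\gamma\mu$ is then immediate: $R=R^0+R^1+R^2$ is a sub-sum of the Taylor--Fourier series of $P$, and each weighted sup-norm entering $|X_{\cdot}|^{a,\bar p}$ is dominated by the associated weighted $\ell^1$-norm on the coefficient array, which cannot grow when monomials are discarded; hence $|X_R|\le|X_P|\le\gamma\mu$ on $D(r,s)$, and applying the same bound to $\bigtriangleup_{\xi\varepsilon}X_R$ gives $|X_R|^{\mathcal L_{a,\bar p}}_{r,D}\le M\mu$ from $|X_P|^{\mathcal L_{a,\bar p}}\le M\mu$.

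For $|X_{P-R}|^{a,\bar p}_{7\eta s,\,D_{\frac a4,p;7}}\le c_1\gamma\mu^2$ I would split $P-R$ into three groups. (i) The part with $|k|>K_+$: using the ${\rm e}^{-|k|r}$ decay and restricting to the narrower strip of width $r_++\frac34(r-r_+)$, the contribution is bounded by $\gamma\mu\sum_{|k|>K_+}|k|^{4\tau+4}{\rm e}^{-|k|(r-r_+)/8}$, which for $\mu$ small enough is $\le\gamma\mu\int_{K_+}^{\infty}t^{n+4}{\rm e}^{-t(r-r_+)/16}\,{\rm d}t\le\gamma\mu\cdot\mu$ by H1). (ii) The part with $|k|\le K_+$ that is of order $\ge2$ in $y$, of order $\ge3$ in $(z,\bar z)$, or simultaneously of positive order in $y$ and in $(z,\bar z)$: shrinking the radius from $s$ to $7\eta s$ and applying Cauchy/Schwarz estimates in $(y,z,\bar z)$, each such monomial acquires additional positive powers of $\eta=\mu^{1/3}$, so that for $\mu$ small the sum over $0<|k|\le K_+$ is bounded by $c\,\gamma\mu^2$. (iii) The index-truncated part, i.e.\ the terms $P^{k11}_{ij}z_i\bar z_j$ with $|i|>I_+$: comparing the target weight $a$ with the slightly smaller $a_+$, so that each discarded mode picks up a factor ${\rm e}^{-(a-a_+)|i|}$, the contribution is bounded by $\gamma\mu\sum_{|i|>I_+}|i|^{2c(\rho)+2}{\rm e}^{-(a-a_+)|i|}\le\gamma\mu\int_{I_+}^{\infty}t^{\rho+4}{\rm e}^{-t(a-a_+)}\,{\rm d}t\le\gamma\mu\cdot\mu$ by H2). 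Adding the three groups and absorbing the constants from the Cauchy estimates into $c_1$ gives the claim; the Lipschitz-in-$\xi$ bound $|X_{P-R}|^{\mathcal L_{a,\bar p}}_{7\eta s,D_{\frac a4,p;7}}\le c_1M\mu^2$ follows by repeating the identical three-group decomposition with $\bigtriangleup_{\xi\varepsilon}X_P$ in place of $X_P$, since each truncation gain above --- the Fourier tail, the powers of $\eta$, the site tail --- is uniform in $\xi$.

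The one point that departs from the textbook proof is group (iii): in the separated-spectrum case one truncates only in $x$, whereas here the weak spectral asymptotics force a truncation of the normal modes themselves, so one must genuinely discard the $z\bar z$-monomials with large site $|i|$ and still control the result in a space with positive exponential weight. Making this quantitative hinges on the target weight $a$ being strictly larger than the domain weight $\frac a4$, so that there is exponential room to extract the factor ${\rm e}^{-(a-a_+)|i|}$ while the tail $\sum_{|i|>I_+}$ still converges, and on keeping the $(a,p)$ versus $(\bar a,\bar p)$ weight bookkeeping consistent throughout. Everything else is routine, as the phrase ``in a usual manner'' in the statement indicates.
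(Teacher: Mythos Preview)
Your proposal is correct and follows essentially the same approach as the paper: the paper also decomposes $P-R=I+\bar I+II$ into the Fourier tail $|k|\ge K_+$, the high-order part $2|m|+|q+\bar q|\ge 3$, and the normal-mode tail $\sum_{|i|\ge I_+}P^{11}_{ij}z_i\bar z_j$, handling the first two ``as previous'' via H1) and the $\eta$-shrinking, and the third via H2) together with the weight gap $a-a_+$ after extracting componentwise bounds $|P^{11}_{ij}|\le\gamma\mu\,{\rm e}^{-a|i|}|i|^{-\bar p}{\rm e}^{\frac a4|j|}|j|^{p}$ by testing $P^{11}$ on unit vectors. The only cosmetic difference is that the paper makes the componentwise estimate for $P^{11}_{ij}$ explicit, whereas you subsume it under ``Cauchy's estimate in $(y,z,\bar z)$''; the substance is identical.
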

\begin{proof} Let $P-R=I+\bar{I}+II$, where
\begin{eqnarray*}
I&=&\sum_{|k|\ge K_+}P_{kmq\bar{q}}y^{m}z^{q}{\bar{z}}^{\bar{q}}
e^{\sqrt{-1}\langle k,x\rangle}, \\
\bar{I}&=&\sum_{k}\sum_{2|m|+|q+\bar{q}|\ge2}P_{kmq\bar{q}}y^{m}z^{q}{\bar{z}}^{\bar{q}}
e^{\sqrt{-1}\langle k,x\rangle},\\
II&=&\sum_{k}\sum_{|i|\ge I_+}P^{11}_{ij}z_{i}\bar{z}_{j}.
\end{eqnarray*}
As previous, ${\rm \bf H1 )}$ guarantees
$$
|X_{I}|^{a_+,\bar{p}}_{7\eta
s,D_{\tilde{a},p;7}},~|X_{\bar{I}}|^{a_+,\bar{p}}_{7\eta
s,D_{\tilde{a},p;7}}\leq c_{1}\gamma\mu^{2},~~|X_{I}|^{{\mathcal
{L}}_{a_+,\bar{p}}}_{7\eta
s,D_{\tilde{a},p;7}},~|X_{\bar{I}}|^{{\mathcal
{L}}_{a_+,\bar{p}}}_{7\eta s,D_{\tilde{a},p;7}}\leq c_{1} M\mu^{2}.
$$
Then we only consider the term $II$ where
$P^{11}=\sum_{k,i,j}P^{k11}_{ij}{\rm e}^{\sqrt{-1}\langle
k,x\rangle},~P^{11}_{ij}=\sum{k}P^{k11}_{ij}{\rm
e}^{\sqrt{-1}\langle k,x\rangle}$.

Since  $P^{11}=\partial_{z}\partial_{\bar{z}}P$, by (\rm\ref {xp}),
we have
$$
\|P^{11}\|\leq \sup_{|z|_{\frac a4,p,}=1}|P^{11}z|_{a,\bar{p}}=
 \sup_{|z|_{\frac a4,p,}=s}\frac 1s|P^{11}z|_{a,\bar{p},D(s)}\le \gamma\mu,
$$
where $\|\cdot\|$ means operator norm.

For any $z^{j}=(\cdots,{\rm e}^{-\frac a4|j|}|j|^{-p},\cdots)$, that
is, all components are zero except for the j-th,
$$
P^{11}z^j=(\cdots,P^{11}_{ij}{\rm e}^{-\frac a4|j|}|j|^{-p},\cdots).
$$
Moreover, $|z^{j}|^{\frac a4,p}=1$ yields
\begin{eqnarray*}
(|P^{11}z^{j}|^{a,\bar{p}})^{2}&=&\sum_{i\in
Z_{1}^{\rho}}|P^{11}_{ij}{\rm e}^{-\frac
a4|j|}|j|^{-p}|^{2}{\rm e}^{2a|i|}|i|^{2\bar{p}}\\
&=&{\rm e}^{-\frac a4|j|}|j|^{-2p}\sum_{i\in
Z_{1}^{\rho}}|P^{11}_{ij}|^{2}{\rm e}^{2a|i|}|i|^{2\bar{p}}\leq
\gamma\mu,
\end{eqnarray*}
i.e. for all $i,j\in Z_{1}^{\rho}$, we have
\begin{equation}
|P^{11}_{ij}|\le \gamma\mu {\rm e}^{-a|i|}|i|^{-\bar{p}}{\rm
e}^{\frac a4|j|}|j|^{p}.\label{rij}
\end{equation}
Then any $|z|^{\frac a4,p}\le s,~i.e.~|z_j|\le s {\rm e}^{-\frac
a4|j|}|j|^{-p}$ yields
\begin{eqnarray}
\sum_{j\in Z_{1}^{\rho}}|P^{11}_{ij}z_{j}|^{2} &\le& \gamma^2\mu^2
{\rm e}^{-2a|i|}|i|^{-2\bar{p}}{\rm e}^{-2|k|r}\sum_{j\in Z_{1}^{\rho}}|z_j|^2{\rm e}^{2\frac a4|j|}|j|^{2p}\nonumber\\
&\le&\gamma^2\mu^2s^2 {\rm e}^{-2a|i|}|i|^{-2\bar{p}}.\label{rijzj}
\end{eqnarray}
Then
\begin{eqnarray}
(|II_{\bar{z}}|^{a_+,\bar{p}}_{D})&\le&\sqrt{\sum_{|i|\ge
I_+}\sum_{j\in Z_{1}^{\rho}}|P^{11}_{ij}z_{j}|^{2}{\rm e}^{2a_+i}i^{2\bar{p}}}\nonumber\\
&\le&s\gamma\mu\sum_{|i|\ge I_+}{\rm e}^{-(a-a_+)i}\le
s\gamma\mu\int_{I_+}^{\infty}t^{\rho}{\rm e}^{-(a-a_+)t}{\rm d}t\nonumber\\
&\le& s\gamma\mu^{2}.\nonumber
\end{eqnarray}
The same is for $II_{z}$ and the Lipshchitz semi-norm.
\qquad\end{proof}

For the proof of  Theorem A, H2) shows
$$
\int_{I_+}^{\infty}t^{5}{\rm e}^{-t(a-a_+)}{\rm d}t\le \mu,
$$
and the truncation $R^2$ has the form:
\begin{eqnarray}
R^2&=&\sum_{|k|\le K_+,i,j}P^{k20}_{ij}z_iz_j{\rm
e}^{\sqrt{-1}\langle k,x\rangle}+\sum_{|k|\le
K_+,i,j}P^{k02}_{ij}\bar{z}_i\bar{z}_j{\rm e}^{\sqrt{-1}\langle
k,x\rangle}\nonumber\\
&+& \sum_{|k|\le K_+,i\le I_+,j}P^{k11}_{ij}z_i\bar{z}_j{\rm
e}^{\sqrt{-1}\langle k,x\rangle}.\nonumber
\end{eqnarray}
We choose $\rho=1,~c(\rho)=\frac52$, the results and the proof also
hold.
\subsection{Linear Equations}
\noindent{}
 We induce the time-1 map $\phi_{F}^{1}|_{t=1}$ generated by the
Hamiltonian F:
\begin{eqnarray}
F&=&\sum_{0<|k|\le K_+,~|m|\le1}F_{km00}y^{m}{\rm
e}^{\sqrt{-1}\langle k,x\rangle}\\
&~&+\sum_{|k|\le K_+,i}(\langle F^{k10}_i,z_i\rangle+\langle
F^{k01}_i,\bar{z}_i\rangle){\rm
e}^{\sqrt{-1}\langle k,x\rangle}\nonumber\\
&~&+\sum_{|k|\le K_+,|k|+||i|-|j||\ne0}(\langle F^{k20}_{ij}z_i,z_j
\rangle+\langle F^{k02}_{ij}\bar{z}_i,\bar{z}_j\rangle){\rm
e}^{\sqrt{-1}\langle k,x\rangle}\nonumber\\
&~&+\sum_{|k|\le K_+,|k|+||i|-|j||\ne0,|i|\le I_+}\langle
F^{k11}_{ij}z_i,\bar{z}_j\rangle {\rm e}^{\sqrt{-1}\langle
k,x\rangle},\nonumber
\end{eqnarray}
where the coefficients dependeding on $\xi$ are to be determined.

Let
$$
[R]=\sum_{|m|+|q|=1}P_{0mqq}y^{m}z^q\bar{z}^q+\sum_{i,j}P^{11}_{0,ij}z_{i}\bar{z}_{j}.
$$
Substituting $F,R,[R]$ into
\begin{equation}
\{N,F\}+R-[R]=0,\label{linear equation}
\end{equation}
we have the following equations for $|k|<K_+,~i,j\in Z_{1}^{\rho}:$
\begin{eqnarray}
\langle k,\omega\rangle
F_{km00}&=&\sqrt{-1}P_{km00},~k\ne0,~|m|\le1,\\
(\langle
k,\omega\rangle-\Omega_i)F^{k10}_{i}&=&\sqrt{-1}P^{k10}_{i},\\
(\langle
k,\omega\rangle+\Omega_i)F^{k01}_{ij}&=&\sqrt{-1}P^{k01}_{i},\\
(\langle
k,\omega\rangle-\Omega_i-\Omega_j)F^{k20}_{ij}&=&\sqrt{-1}P^{k20}_{ij},~|k|+||i|-|j||\ne0,\\
(\langle
k,\omega\rangle+\Omega_i+\Omega_j)F^{k02}_{ij}&=&\sqrt{-1}P^{k02}_{ij},~|k|+||i|-|j||\ne0,\\
(\langle
k,\omega\rangle+\Omega_i-\Omega_j)F^{k11}_{ij}&=&\sqrt{-1}P^{k11}_{ij},~|k|+||i|-|j||\ne0,~|i|\le
I_+.\label{fkij}
\end{eqnarray}
For Theorem A', note that the term
$\sum_{|i|=|j|}P^{011}_{ij}z_{i}\bar{z}_{j}$ can be decomposed into
$\sum_{i\in Z_{1}^{d}}P^{k011}_{ii}z_{i}\bar{z}_{i}$ and
$\sum_{|i|=|j|,i\ne j}P^{k011}_{ij}z_{i}\bar{z}_{j}$. Since $P\in
{\mathcal {A}}$, the last term is absent.

Consider $\xi\in\Pi_+$ which satisfies (\ref{dufantu}), then we have

\begin{lemma} $F$ is uniquely determined by {\rm(\ref{linear equation})} and
there exits a constant $c_{2}$ such that on $\hat{D}_{\frac
a4,p}(s)\times\Pi_{+}$,
\begin{eqnarray}
|X_{F}|^{a_+,\bar{p}}
&\leq&c_{2}\mu\Gamma(r-r_{+})C(a-a_+),\label{xf}\\
|X_{F}|^{{\mathcal
{L}}_{a_+,\bar{p}}}&\leq&c_{2}\frac{M}{\gamma}\mu\Gamma(r-r_{+})C(a-a_+).
\label{xfl}\end{eqnarray} Moreover, $F\in{\mathcal {A}}$ in Theorem
A'.
\end{lemma}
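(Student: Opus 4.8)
The plan is to solve the homological equation \eqref{linear equation} coefficient-by-coefficient using the scalar equations \eqref{fkij} and the preceding ones, and then to bound the Hamiltonian vector field $X_F$ by summing the resulting Fourier--Taylor series against the weighted norm. First I would note that, since $\xi\in\Pi_+$ satisfies the modified Diophantine conditions \eqref{dufantu}, none of the small divisors $\langle k,\omega\rangle$, $\langle k,\omega\rangle\pm\Omega_i$, $\langle k,\omega\rangle-\Omega_i-\Omega_j$, $\langle k,\omega\rangle+\Omega_i-\Omega_j$ (for the relevant index ranges $0<|k|\le K_+$, and for the $R^2$-terms $|k|+\big||i|-|j|\big|\ne0$, with $|i|\le I_+$ in the $k11$ case) vanish; hence each coefficient $F_{km00}$, $F^{k10}_i$, $F^{k01}_i$, $F^{k20}_{ij}$, $F^{k02}_{ij}$, $F^{k11}_{ij}$ is uniquely determined by division, which gives existence and uniqueness of $F$. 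The membership $F\in\mathcal A$ in Theorem A$'$ is automatic: each divisor only couples indices with $\sum k_j\imath_j+\sum(q_i-\bar q_i)i$ fixed, so the momentum-conservation constraint defining $\mathcal A$ is inherited from $R$ (using A4)$'$).

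Next I would turn to the quantitative estimate. The three genuinely different denominators behave as follows: for the $|l|=0$ part the divisor is $\gtrsim \gamma/(1+|k|^\tau)$; for $l\in\Lambda_+$ (the $k10$, $k01$, $k20$, $k02$ terms and the $k11$ terms with $|i|\ne|j|$) one has $|\langle k,\omega\rangle+\langle l,\Omega\rangle|\ge \gamma\langle l\rangle_d/(1+|k|^\tau)$, and the factor $\langle l\rangle_d=|\sum_i l_i|i|^d|$ is, by the separation statement in A2)$'$ ($\bar\Omega_i-\bar\Omega_j=|i|^d-|j|^d+o(|j|^{-\delta})$), bounded below by a positive power of $\max(|i|,|j|)$ — this is exactly what compensates the loss of spectral separation when $d$ is small; for $l\in\Lambda_-$ (opposite components, i.e. the $k11$ terms with $|i|=|j|$, $i\ne j$, which after truncation carry $|i|\le I_+$) the divisor is $\gtrsim \gamma/(|i|^{c(\rho)}(1+|k|^\tau))$. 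In every case the denominator is at least $c\,\gamma\,(1+|k|^\tau)^{-1}|i|^{-c(\rho)}$ up to the $\langle l\rangle_d$ gain, so dividing the coefficient bounds from Lemma 2.1 ($|X_R|^{a,\bar p}\le\gamma\mu$, together with the pointwise estimates like \eqref{rij} for $P^{11}_{ij}$) by these divisors produces coefficients of $F$ of size $\lesssim \mu\,(1+|k|^\tau)\,|i|^{c(\rho)}$ times the original ones. I would then pass to $X_F=(F_y,-F_x,F_{\bar z},-F_z)$: differentiating in $x$ costs a factor $|k|$, differentiating in $z,\bar z$ shifts the Taylor degree, and the key point is that when I compute the weighted norm $|X_F|^{a_+,\bar p}$ on $\hat D_{\frac a4,p}(s)$ — evaluated with decay rate $a_+<a$ — the ``extra'' polynomial factors $|k|^{\,\text{power}}$ and $|i|^{\,\text{power}}$ are absorbed by the exponential gaps $e^{-|k|(r-r_+)}$ and $e^{-|i|(a-a_+)}$. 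Collecting the $k$-sum into $\Gamma(r-r_+)=\sum_{0<|k|\le K_+}|k|^{4\tau+4}e^{-|k|(r-r_+)/8}$ and the $i$-sum into $C(a-a_+)=r\sum_{i}|i|^{2c(\rho)+2}e^{-(a-a_+)|i|}$ (the exponents $4\tau+4$ and $2c(\rho)+2$ being chosen precisely to dominate the accumulated powers, squared because the weighted $\ell^2$-norm of the $z,\bar z$ components pairs two coefficient bounds) yields \eqref{xf}.

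For the Lipschitz estimate \eqref{xfl} I would differentiate the relation ``divisor $\times$ coefficient $=$ coefficient of $R$'' in $\xi$: $\triangle_{\xi\varepsilon}F_{\bullet}$ picks up two contributions, one from $\triangle_{\xi\varepsilon}$ of the $R$-coefficient (controlled by $|X_R|^{\mathcal L_{a,\bar p}}\le M\mu$) and one from $\triangle_{\xi\varepsilon}$ of the divisor acting on $F_\bullet$ itself; the latter uses $|\omega|^{\mathcal L}+|\Omega|^{\mathcal L}_{-\delta}\le M$ from \eqref{omega}, which produces a factor $M\cdot|i|^{-\delta}$, and then a \emph{second} division by the small divisor, costing another factor $\gamma^{-1}(1+|k|^\tau)|i|^{c(\rho)}$. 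Thus the Lipschitz bound carries an extra $M/\gamma$ relative to \eqref{xf}, and the additional $|i|^{-\delta}$ from perturbing $\Omega$ is harmless since $-\delta>0$ is a fixed constant and is again swallowed by the exponential in $C(a-a_+)$ (one may slightly enlarge the exponent $2c(\rho)+2$, or simply note $-\delta\le c(\rho)$ in the relevant regime). Summing as before gives \eqref{xfl}. The main obstacle — and the genuinely new point compared with standard KAM — is the bookkeeping in the $\Lambda_+$ case: one must verify that the $\langle l\rangle_d$ in the denominator of the Diophantine condition really does dominate the polynomial growth coming from the $z,\bar z$-differentiations for \emph{all} $d>0$, not merely $d\ge1$, using the refined asymptotic $\bar\Omega_i-\bar\Omega_j=|i|^d-|j|^d+o(|j|^{-\delta})$ of A2)$'$; once the exponents $c(\rho)\ge c_1(\rho)+\rho$ and $\tau\ge n+\frac{c(\rho)+2}{d}+4$ are fixed to make the two convergence factors $\Gamma$ and $C$ finite (indeed $O(1)$ after the H1)--H2) truncation), the remaining estimates are routine geometric-series majorizations.
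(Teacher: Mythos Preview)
Your overall architecture --- solve the scalar homological equations coefficientwise, divide by the small divisors using the Diophantine bounds on $\Pi_+$, extract the pointwise estimate on $P^{k11}_{ij}$ via the test vector $z^j=(\dots,e^{-\frac a4|j|}|j|^{-p},\dots)$ as in \eqref{rij}, then sum against the weighted norm and absorb the polynomial losses into $\Gamma(r-r_+)$ and $C(a-a_+)$, and finally handle the Lipschitz norm by the product-rule splitting --- is exactly what the paper does. The $F\in\mathcal A$ remark is also correct and matches the paper's one-line justification.

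However, you have inverted the location of the difficulty. By the definition in Remark~II, $\Lambda_-$ consists of \emph{all} $l$ with two components of opposite sign, i.e.\ \emph{all} $F^{k11}$ terms with divisor $\langle k,\omega\rangle+\Omega_i-\Omega_j$, not only those with $|i|=|j|$; the $k11$ terms with $|i|\ne|j|$ are \emph{not} in $\Lambda_+$. Consequently your claim that for those terms one may use the $\langle l\rangle_d$--weighted Diophantine bound is not available: the paper assigns to the $k11$ case only the weaker lower bound $\gamma/((1+|k|^\tau)|i|^{c(\rho)})$ (see \eqref{dufantu1}), and this is precisely what forces the truncation $|i|\le I_+$ and generates the factor $C(a-a_+)$. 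Had one tried instead to use a $\langle l\rangle_d=||i|^d-|j|^d|$ gain for $k11$ with $|i|\ne|j|$, then for $0<d<1$ and $|j|=|i|\pm1$ this quantity behaves like $d|i|^{d-1}\to0$, so the estimate would fail. The paper accordingly spends essentially the entire proof on the $\Lambda_-$ case and explicitly remarks that ``the estimates of $|F^{11}z|^{a_+,\bar p}$, $|F^{11}z|^{\mathcal L_{a_+,\bar p}}$ show all differences from the previous KAM steps.'' Conversely, for $l\in\Lambda_+$ (the $k10,k01,k20,k02$ terms) the paper does \emph{not} exploit the detailed asymptotics of A2)$'$ at all: it simply uses $\langle l\rangle_d\ge1$, so that the divisor is $\ge\gamma/(1+|k|^\tau)$, and the argument is routine (see the brief treatment of $F^{k10}$ at the end of the proof). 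Thus your ``main obstacle'' paragraph concerning $\Lambda_+$ and the refined $\bar\Omega_i-\bar\Omega_j$ expansion is misplaced; once you reassign the $k11$ terms to $\Lambda_-$ and use \eqref{dufantu1}, the rest of your outline goes through exactly as the paper's proof.
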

\begin{proof} First, we consider the case $l\in\Lambda_{-}$, i.e.,
\begin{equation}
|\langle k,\omega(\xi)\rangle+\Omega_{i}-\Omega_{j}|\ge
\frac{\gamma}{(1+|k|^{\tau})|i|^{c(\rho)}},~|k|+||i|-|j||\ne0,~|i|\le
I_+.\label{dufantu1}
\end{equation}

Since  $P^{11}=\partial_{z}\partial_{\bar{z}}P$, by (\rm\ref {xp}),
we have
$$
\|P^{11}\|\leq \sup_{|z|_{\frac a4,p}=1}|P^{11}z|_{a,\bar{p}}=
 \sup_{|z|_{\frac a4,p}=s}\frac 1s|P^{11}z|_{a,\bar{p},D(s)}\le \gamma\mu,
$$
where $\|\cdot\|$ means operator norm.

Expanding $P^{11}$ into its Fourier series, we have
\begin{eqnarray*}
\|P^{k11}\|\leq c\|P^{11}\|\le c \gamma\mu {\rm e}^{-|k|r},
\end{eqnarray*}
for all $k\in{\bf Z}^{n}$.

By lemma 2.1, choose $z^{j}=(\cdots,{\rm e}^{-\frac
a4|j|}|j|^{-p},\cdots)$, that is, all components are zero except for
the $j$-th. For all $i,j\in Z_1^{\rho}$, we have
\begin{equation}
|P^{k11}_{ij}|\le \gamma\mu {\rm e}^{-a|i|}|i|^{-\bar{p}}{\rm
e}^{\frac a4|j|}|j|^{p}{\rm e}^{-|k|r}.\label{rkij}
\end{equation}
Then any $|z|^{\frac a4,p}\le s,~i.e.,~|z_j|\le s {\rm e}^{-\frac
a4|j|}|j|^{-p}$, yields
\begin{eqnarray}
\sum_{j\in Z^{\rho}_{1}}|P^{k11}_{ij}z_{j}|^{2} &\le& \gamma^2\mu^2
{\rm e}^{-2a|i|}|i|^{-2\bar{p}}{\rm e}^{-2|k|r}\sum_{j\in Z_{1}^{\rho}}|z_j|^2{\rm e}^{\frac a2|j|}|j|^{2p}\nonumber\\
&\le&\gamma^2\mu^2s^2 {\rm e}^{-2a|i|}|i|^{-2\bar{p}}{\rm
e}^{-2|k|r}.\label{rkijzj}
\end{eqnarray}
 Denote $\delta_{k,ij}\equiv\langle
k,\omega\rangle+\Omega_{i}-\Omega_{j}$. According to (\rm\ref{fkij})
and (\rm\ref{dufantu1}), we have
\begin{eqnarray}
F^{k11}_{ij}&=&\sqrt{-1}\frac{P^{k11}_{ij}}{\delta_{k,ij}},\nonumber\\
|F^{k11}_{ij}z_{j}|&\leq&
\frac{(1+|k|^{\tau})|i|^{c(\rho)}}{\gamma}|P^{k11}_{ij}z_{j}|.\nonumber
\end{eqnarray}
This together with (\ref{rkijzj}) implies that
\begin{eqnarray}
|\sum_{j\in Z_{1}^{\rho}}F^{k11}_{ij}z_{j}|^{2}&\le& \sum_{j\in
Z_{1}^{\rho}}|F^{k11}_{ij}z_{j}|^{2}\le
\frac{(1+|k|^{\tau})^{2}|i|^{2c(\rho)}}{\gamma^2}\sum_{j\in Z_{1}^{\rho}}|P^{k11}_{ij}z_{j}|^{2}\nonumber\\
&\le &(1+|k|^{\tau})^{2}\mu^{2}s^{2}{\rm e}^{-2|k|r}{\rm
e}^{-2a|i|}|i|^{-2\bar{p}+2c(\rho)},\nonumber
\end{eqnarray}
and
\begin{eqnarray}
(|F^{k11}z|^{a_+,\bar{p}}_{s,~D\times\Pi_{+}})^{2}&= &\sum_{i\in
Z_{1}^{\rho}}|\sum_{j\in Z_{1}^{\rho}}
F^{k11}_{ij}z_{j}|^{2} {\rm e}^{2a_+|i|}|i|^{2\bar{p}}\nonumber\\
&\leq&\mu^{2}s^{2}|k|^{2\tau}{\rm e}^{-2|k|r}C(a-a_+).\nonumber
\end{eqnarray}
Then we have
\begin{eqnarray}
\frac{1}{s}|F^{11}z|^{a_+,\bar{p}}_{s,~\hat{D}_{\frac
a4,p}(s)\times\Pi_{+}}&\leq&\frac{1}{s}\sum_{|k|\leq
K_{+}}|F_{k}^{11}z|^{a_+,\bar{p}}_{s,\hat{D}_{\frac a4,p}(s)\times\Pi_{+}}{\rm e}^{|k|(r_{+}+\frac{7}{8}(r-r_{+}))}\nonumber\\
&\leq&
c\mu\sum_{0<|k|\le K_{+}}(1+|k|^{\tau}){\rm e}^{-|k|\frac{r-r_{+}}{8}}C(a-a_+)\nonumber\\
&\leq&c~\mu\Gamma(r-r_{+})C(a-a_+).\nonumber
\end{eqnarray}

 The Lipschitz estimate almost follows the same idea. Fist, we
decompose the following into two parts:
\begin{eqnarray*}
\triangle
F^{11}_{k,ij}z_{j}&=&\frac{P^{k11}_{ij}(\xi)z_{j}}{\delta_{k,ij}(\xi)}
-\frac{P^{k11}_{ij}(\zeta)z_{j}}{\delta_{k,ij}(\zeta)}\\
&=&\frac{P^{k11}_{ij}(\xi)z_{j}-P^{k11}_{ij}(\zeta)z_{j}}{\delta_{k,ij}(\xi)}
+P^{k11}_{ij}(\zeta)z_{j}(\frac{1}{\delta_{k,ij}(\xi)}-\frac{1}{\delta_{k,ij}(\xi)})\\
&=&\frac{\triangle
P^{k11}_{ij}z_{j}}{\delta_{k,ij}(\xi)}+P^{k11}_{ij}(\zeta)z_{j}\triangle\delta^{-1}_{k,ij}.
\end{eqnarray*}
Since
$$
-\triangle\delta^{-1}_{k,ij}=\frac{\triangle\delta_{k,ij}}
{\delta_{k,ij}(\xi)\delta_{k,ij}(\zeta)} =\frac{\langle
k,\triangle\omega\rangle+\triangle\Omega_{i}-\triangle\Omega_{j}}
{\delta_{k,ij}(\xi)\delta_{k,ij}(\zeta)},
$$
 we obtain
\begin{eqnarray*}
|\triangle\delta^{-1}_{k,ij}|&\le&\frac{(1+|k|^{\tau})^{2}|i|^{2c(\rho)}}{\gamma^{2}}(|k||\triangle\omega|
+|\triangle\Omega|_{-\delta})\\
&\le&c\frac{(1+|k|^{\tau})^{2}(1+|k|)^2|i|^{2c(\rho)}}{\gamma^{2}}M|\xi-\varepsilon|.
\end{eqnarray*}

For fixed $\xi, \varepsilon$, $\bigtriangleup_{\xi\varepsilon}$ is a
bounded linear operator, in the operator norm below:
\begin{eqnarray}
\|\bigtriangleup_{\xi\varepsilon}P^{11}\|&=& \sup_{|z|_{\frac
a4,p}=1}|\bigtriangleup_{\xi\varepsilon}P^{11}z|_{a,\bar{p}}=
\sup_{|z|_{\frac a4,p}=s}\frac1s|\bigtriangleup_{\xi\varepsilon}P^{11}z|_{a,\bar{p},D}\nonumber\\
&\leq&M\mu|\xi-\varepsilon|,\nonumber
\end{eqnarray}
and for all $k$,
\begin{eqnarray}
\|\bigtriangleup_{\xi\varepsilon}P^{k11}\|\le
c\|\bigtriangleup_{\xi\varepsilon}P^{11}\|{\rm e}^{-|k|r}\le
M\mu|\xi-\varepsilon|{\rm e}^{-|k|r}.
\end{eqnarray}
Choose $z^{j}=(\cdots,{\rm e}^{-\frac a4|j|}|j|^{-p},\cdots)$.
Similar argument as above implies
\begin{equation}
|\triangle P_{ij}^{k11}|^{2}\leq c~ M^{2}\mu^{2}|\xi-\zeta|^{2}{\rm
e}^{-2a|i|}|i|^{-2\bar{p}}{\rm e}^{\frac a2|j|}j^{2p}{\rm
e}^{-2|k|r},\label{rkijl}
\end{equation}
then, we have
\begin{eqnarray*}
&~&|\sum_{j\in Z_{1}^{\rho}}\frac{\triangle
P_{ij}^{k11}z_j}{\delta_{k,ij}}|^2\nonumber\\&\le&\sum_{j\in
Z_{1}^{\rho}}\frac{|\triangle
P_{ij}^{k11}z_j|^2}{|\delta_{k,ij}|^2}+
\frac{(1+|k|^{\tau})^{2}|i|^{2c(\rho)}}{\gamma^{2}}
\sum_{j\in Z_{1}^{\rho}}|\triangle P_{ij}^{k11}z_j|^{2}\nonumber\\
&\le&c \frac{(1+|k|^{\tau})^{2}}{\gamma^2}{\rm
e}^{-2|k|r}M^2\mu^2s^2|\xi-\varepsilon|^2
{\rm e}^{-2a|i|}|i|^{2c(\rho)},\nonumber\\
\end{eqnarray*}
\begin{eqnarray}
&~&|\sum_{j\in
Z_{1}^{\rho}}P^{k11}_{ij}z_{j}\Delta\delta_{k,ij}^{-1}|^{2}\nonumber\\
&\le&\sum_{j\in
Z_1^{\rho}}|P^{k11}_{ij}z_{j}|^2|\Delta\delta_{k,ij}^{-1}|^2+ c\frac
{(1+|k|^{\tau})^{4}|k|^2}{\gamma^4}|\xi-\varepsilon|^2M^2 \sum_{j\in
Z_1^{\rho}}|P^{k11}_{ij}z_{j}|^2.\nonumber
\end{eqnarray}
Hence
\begin{eqnarray}
&~&(|\triangle F_{k}^{11}z|^{a_+,\bar{p}}_{s,D\times\Pi_{+}})^{2}
=\sum_{|i|\le I_+}|\sum_{j\in Z_1^{\rho}}(\frac{\triangle
P^{k11}_{ij}z_{j}}{\delta_{k,ij}}+R^{k11}_{ij}z_{j}\triangle\delta^{-1}_{k,ij})|^{2}
{\rm e}^{2a_+|i|}|i|^{2\bar{p}}\nonumber\\
&\leq& 2\sum_{|i|\le I_+}(|\sum_{j\in Z_1^{\rho}}\frac{\triangle
P^{k11}_{ij}z_{j}}{\delta_{k,ij}}|^{2}
 +|\sum_{j\in Z_1^{\rho}}|P^{k11}_{ij}z_{j}\triangle\delta^{-1}_{k,ij}|^{2})
 {\rm e}^{2a_+|i|}|i|^{2\bar{p}}\nonumber\\
 &\leq&~c\frac{(1+|k|^{\tau})^{4}|k|^2}{\gamma^2}M^2\mu^2s^2|\xi-\varepsilon|^2 {\rm e}^{-2|k|r}
 {\rm e}^{-2a|i|}|i|^{4c(\rho)}. \nonumber
\end{eqnarray}
Since $\sqrt{a+b}\leq\sqrt{a}+\sqrt{b},~~a,b>0$, we have
\begin{eqnarray}
\frac{1}{s}|F_{k}^{11}z|^{{\mathcal
{L}}_{a_+,\bar{p}}}_{s,~D\times\Pi_{+}}
&=&\sup_{\xi\neq\zeta}\frac{|\triangle
F_{k}^{11}z|^{a_+,\bar{p}}_{s,~D\times\Pi_{+}}}{|\xi-\zeta|}\nonumber\\
&\leq&c~\frac{1}\gamma \mu M(1+|k|^{\tau})^{2}|k|{\rm
e}^{-|k|r}C(a-a_+),\nonumber
\end{eqnarray}
and
\begin{eqnarray}
\frac{1}{s}|F^{11}z|^{{\mathcal
{L}}_{a_+,\bar{p}}}_{s,\hat{D}_{\frac
a4,p}(s)\times\Pi_{+}}&\leq&\frac{1}{s}\sum_{0<|k|\leq
K_{+}}|F_{k}^{11}z|^{{\mathcal {L}}_{a_+,\bar{p}}}
_{s,D\times\Pi_{+}}~{\rm e}^{|k|(r_{+}+\frac{7}{8}(r-r_{+}))}C(a-a_+)\nonumber\\
&\leq&
c~\frac{M}{\gamma}\mu\sum_{0<|k|<K_{+}}(1+|k|^{\tau})^{2}|k|{\rm e}^{-|k|\frac{r-r_{+}}{8}}C(a-a_+)\nonumber\\
&\leq&c~\frac{M}{\gamma}\mu\Gamma(r-r_{+})C(a-a_+).
\end{eqnarray}

The estimates of $|F^{11}z|^{a_+,\bar{p}},~|F^{11}z|^{{\mathcal
{L}}_{a_+,\bar{p}}}$ show all differences from the previous KAM
steps. Then we give the outline estimates of $F_{i}^{k10}$.
Since(\ref{xp}), we have
$$
\frac{1}{r}|P^{10}|^{a,\bar{p}}_{r,D(r,s)}\leq \gamma\mu,\nonumber
$$
and
$$
|P^{k10}|^{a,\bar{p}}_{r,D(r,s)}\leq c r\gamma\mu {\rm e}^{-|k|r},
$$
where $c$ is a constant. Then, according to the corresponding
Diophantine condition  $$|\langle k,\omega\rangle+\Omega_{j}|\geq
\frac{\gamma j^{d}}{A_{k}}\ge\frac{\gamma}{A_{k}}, $$and
$$
\sqrt{-1}F^{10}_{k,j}=\frac{R^{10}_{k,j}}{\langle
k,\omega\rangle+\Omega_{j}},~~~~~j\geq 1,
$$
the following hold:
\begin{eqnarray}
\frac{1}{r}|F^{10}z|^{a_+,\bar{p}}_{s,\hat{D}_{\frac a4,p}(s)\times\Pi_{+}}&\leq&c~\mu\Gamma(r-r_{+})C(a-a_+),\nonumber\\
\frac{1}{r}|F^{10}z|^{{\mathcal
{L}}_{a_+,\bar{p}}}_{s,\hat{D}_{\frac
a4,p}(s)\times\Pi_{+}}&\leq&c~\frac{M}{\gamma}\mu\Gamma(r-r_{+})C(a-a_+).\nonumber
\end{eqnarray}

All the other estimates are similar or even simpler and admit the
same results, for details, see \cite{geng}, \cite{posel1}. Moreover,
in Theorem A', $P\in{\mathcal {A}}$ implies $F\in{\mathcal {A}}$.

Above all, we fulfill the proof of the Lemma. \qquad\end{proof}

Let $\Phi_{+}$ denote the time-$1$ map generated by $F$. Then it is
a canonical transformation and
\begin{eqnarray}
H\circ\Phi_{+}&=&N+[R]+\int_{0}^{1}\{R_t,F\}\circ\phi^t_F{\rm
d}t+(P-R)\circ \Phi_{+}\nonumber\\
&=&N_{+}+P_{+},\nonumber
\end{eqnarray}
where $R_t=(1-t)\{[R],F\}+R$, and
\begin{eqnarray}
e_{+}&=&e+P_{0000},\label{e+}\\
\omega_{+}&=&\omega+P_{0100}\label{omega+},\\
\Omega_{+}&=&\Omega+P_{0011}\label{Omega+}.
\end{eqnarray}

For Theorem A, we induct $F$ as
\begin{eqnarray}\label{yiweiF}
F&=&\sum_{0<|k|\le K_+~|m|\le1}F_{k200}y^{m}{\rm
e}^{\sqrt{-1}\langle k,x\rangle}\\
&~&+\sum_{k,i}F^{10}_{k,i}z_i{\rm
e}^{\sqrt{-1}\langle k,x\rangle}+\sum_{k,i}F^{01}_{k,i}\bar{z}_i{\rm
e}^{\sqrt{-1}\langle k,x\rangle}\nonumber\\
&~&+\sum_{k,i,j}F^{20}_{k,ij}z_iz_j{\rm e}^{\sqrt{-1}\langle
k,x\rangle}+\sum_{k,i,j}F^{02}_{k,ij}\bar{z}_i\bar{z}_j{\rm
e}^{\sqrt{-1}\langle k,x\rangle}\nonumber\\
&~&+\sum_{k,j,i\le I_+}F^{11}_{k,ij}z_i\bar{z}_j{\rm
e}^{\sqrt{-1}\langle k,x\rangle},\nonumber
\end{eqnarray}
where
$F_{k200},~F_{k,i}^{10},~F_{k,i}^{01},~F^{20}_{k,ij},~F^{02}_{k,ij},~F^{11}_{k,
ij}$ are functions of $\xi$.

According to (\ref{linear equation}), we get the following equations
for $|k|\le K_+$:
\begin{eqnarray}
\langle k,\omega\rangle F_{km00}&=&\sqrt{-1}P_{km00},~~k\ne0,~~|m|\le1,\nonumber\\
(\langle k,\omega\rangle+\Omega_i)F^{k10_{i}}&=&\sqrt{-1}P^{k10}_{i},\nonumber\\
(\langle k,\omega\rangle-\Omega_i)F^{k01}_{i}&=&\sqrt{-1}P^{k01}_{i},\nonumber\\
(\langle k,\omega\rangle+\Omega_i+\Omega_j)F^{k20}_{ij}&=&\sqrt{-1}P^{k20}_{ij},\nonumber\\
(\langle k,\omega\rangle-\Omega_i-\Omega_j)F^{k02}_{ij}&=&\sqrt{-1}P^{k02}_{ij},\nonumber\\
(\langle k,\omega\rangle+\Omega_i-\Omega_j)F^{k11}_{ij}&=&\sqrt{-1}P^{k11}_{ij},~~i\le I_+,\nonumber\\
\end{eqnarray}
for all $\xi\in\Pi_+=\{\xi\in\Pi:~|\langle
k,\omega(\xi)\rangle+\langle
l,\Omega(\xi)\rangle|\ge\frac{\gamma}{A_{k,l}}\}$.

Replace $c(\rho)=\frac52,~\rho=1,~|i|=i,~C(a-a_+)=\sum_{0<i\le
I_+}i^{10}{\rm e}^{-(a-a_+)i}$, we have the lemma:

\begin{lemma}
 The linearized equation {\rm(\ref{linear equation})} has a unique
solution F normalized by $[F]=0$, and there exits a constant $c_{2}$
such that on $\hat{D}_{\frac a4,p}(s)\times\Pi_{+}$,
\begin{eqnarray}
|X_{F}|^{a_+,\bar{p}}
&\leq&c_{2}\mu\Gamma(r-r_{+})C(a-a_+),\nonumber\\
|X_{F}|^{{\mathcal
{L}}_{a_+,\bar{p}}}&\leq&c_{2}\frac{M}{\gamma}\mu\Gamma(r-r_{+})C(a-a_+).\nonumber
\end{eqnarray}
\end{lemma}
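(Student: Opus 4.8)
The statement is the exact analogue of Lemma 2.3 but for Theorem A, where the spectrum is simple, $\rho=1$, $c(\rho)=\tfrac52$, and $C(a-a_+)=\sum_{0<i\le I_+}i^{10}e^{-(a-a_+)i}$. The plan is to follow the proof of Lemma 2.3 line by line, solving each of the homological equations
\begin{equation*}
\langle k,\omega\rangle F_{km00}=\sqrt{-1}P_{km00},\quad
(\langle k,\omega\rangle\pm\Omega_i)F^{k10/k01}_i=\sqrt{-1}P^{k10/k01}_i,\ \dots,\
(\langle k,\omega\rangle+\Omega_i-\Omega_j)F^{k11}_{ij}=\sqrt{-1}P^{k11}_{ij},
\end{equation*}
by division, using the Diophantine lower bounds encoded in $\Pi_+$ (the modified conditions of Remark II). First I would record, exactly as in Lemma 2.1 and in the proof of Lemma 2.3, the coefficient bounds $|P^{k11}_{ij}|\le \gamma\mu\,e^{-ai}i^{-\bar p}e^{\frac a4 j}j^{p}e^{-|k|r}$ (and the simpler analogues for the $y$, $z$, $z^2$, $\bar z^2$ blocks), which come from $\|P^{11}\|\le\gamma\mu$ and Cauchy estimates in $x$. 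Then the single genuinely new block is $F^{k11}$, for which $|i|\le I_+$ and the small divisor obeys $|\langle k,\omega\rangle+\Omega_i-\Omega_j|\ge \gamma/((1+|k|^\tau)i^{5/2})$; dividing and summing over $j$ with Cauchy–Schwarz gives, as in Lemma 2.3,
\begin{equation*}
\bigl(|F^{k11}z|^{a_+,\bar p}_{s}\bigr)^2\le \mu^2 s^2 |k|^{2\tau}e^{-2|k|r}\,C(a-a_+),
\end{equation*}
and summing the Fourier series in $k$ against $e^{|k|(r_++\frac78(r-r_+))}$ yields the factor $\Gamma(r-r_+)$. The other five blocks are handled identically but more easily, since their divisors are bounded below by $\gamma/A_k$ with no extra site-dependent loss; this gives the bound on $|X_F|^{a_+,\bar p}$.

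For the Lipschitz estimate I would reuse verbatim the splitting from Lemma 2.3,
\begin{equation*}
\triangle_{\xi\varepsilon}F^{k11}_{ij}z_j=\frac{\triangle_{\xi\varepsilon}P^{k11}_{ij}\,z_j}{\delta_{k,ij}(\xi)}+P^{k11}_{ij}(\varepsilon)\,z_j\,\triangle_{\xi\varepsilon}\delta^{-1}_{k,ij},
\end{equation*}
with $\delta_{k,ij}=\langle k,\omega\rangle+\Omega_i-\Omega_j$ and $|\triangle_{\xi\varepsilon}\delta^{-1}_{k,ij}|\le c(1+|k|^\tau)^2(1+|k|)^2 i^{5}\gamma^{-2}M|\xi-\varepsilon|$, using $|\omega|^{\mathcal L}+|\Omega|^{\mathcal L}_{-\delta}\le M$ from A2). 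Combining with the Lipschitz coefficient bound $|\triangle_{\xi\varepsilon}P^{k11}_{ij}|\le cM\mu|\xi-\varepsilon|e^{-ai}i^{-\bar p}e^{\frac a2 j}j^{2p}e^{-|k|r}$ and summing as before produces the extra factor $M/\gamma$ and an extra polynomial factor $(1+|k|^\tau)^2|k|$ in $k$, which is still absorbed into $\Gamma(r-r_+)$ by H1). The $F^{k10},F^{k01},F^{k20},F^{k02}$ blocks are treated the same way with their own (weaker) divisors, and the analyticity of $X_F$ and the fact that it maps $\mathcal P^{a_+,p}\to\mathcal P^{a_+,\bar p}$ follow from the uniform convergence of the Fourier–Taylor series guaranteed by these estimates, exactly as in \cite{geng,posel1}.

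The only point requiring care — and the main obstacle — is the bookkeeping of the exponents in the $F^{k11}$ block: one must check that the loss $i^{c(\rho)}=i^{5/2}$ in the divisor (and $i^{2c(\rho)}=i^{5}$ in the Lipschitz term, hence $i^{4c(\rho)}=i^{10}$ after squaring and before taking the square root) is exactly compensated by the weight $i^{2p}$ vs.\ $i^{2\bar p}$ and the Gaussian-type decay built into $C(a-a_+)=\sum_{0<i\le I_+}i^{10}e^{-(a-a_+)i}$; this is the reason the definition of $C(a-a_+)$ carries the power $2c(\rho)+2=10$ when $\rho=1$, $c(\rho)=\tfrac52$, and the cutoff $I_+$ is chosen through H2) precisely so that $\sum_{i}i^{10}e^{-(a-a_+)i}<\infty$ uniformly. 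Once the exponent accounting is checked in this one block, every remaining estimate is either identical to or simpler than the corresponding estimate in the proof of Lemma 2.3, and the stated bounds on $|X_F|^{a_+,\bar p}$ and $|X_F|^{\mathcal L_{a_+,\bar p}}$ follow with a suitable absolute constant $c_2$.
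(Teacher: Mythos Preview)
Your proposal is correct and matches the paper's approach exactly: the paper gives no separate proof of this lemma but simply instructs the reader to ``Replace $c(\rho)=\tfrac52,\ \rho=1,\ |i|=i,\ C(a-a_+)=\sum_{0<i\le I_+}i^{10}e^{-(a-a_+)i}$'' in the proof of the preceding lemma (your ``Lemma 2.3''), which is precisely what you do. Your additional remarks on the exponent bookkeeping in the $F^{k11}$ block and on why the power $10$ arises are a helpful elaboration of what the paper leaves implicit.
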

\subsection{Coordinate Transformation}
\noindent{}
\begin{lemma} Assume the following
\begin{itemize}
\item[{\rm \bf H3)}] $c_2\mu\Gamma (r-r_+) C(a-a_+)<
\frac{1}{8}(r-r_+)$;
\item[{\rm \bf H4)}] $c_2s^{2}\mu\Gamma (r-r_+)C(a-a+)<
5s_{+}^{2}$;
\item[{\rm \bf H5)}] $c_2s\mu\Gamma(r-r_{+})C(a-a+)<s_{+}$.
\end{itemize}
Then the following hold:
\begin{itemize}
\item[{\rm 1)}]
\begin{equation}
\phi^t_F:D_{\frac a4,p;3}\longrightarrow D_{\frac
a4,p;4},~\forall~0<t\leq 1,\label{2.24}
\end{equation}
so $\phi^{t}_{F}$ is well defined.
 \item[{\rm 2)}] $\Phi_{+}:D_{+}\longrightarrow D_{\frac a4,p}(r,s).$
\item[{\rm 3)}] There exists a positive constance $c_{3}$, such that
on $D_{\frac a4,p;3}\times\Pi_{+}$,
\begin{eqnarray}
|\phi^{t}_{F}-id|^{a_+,\bar{p}}&\leq& c_{3}
\mu\Gamma(r-r_+)C(a-a+),\nonumber\\
|D\phi^{t}_{F}-Id|^{a_+,\bar{p}}&\leq& c_{3}
\mu\Gamma(r-r_+)C(a-a+),\nonumber
\end{eqnarray}
and $\phi^{t}_{F}$ is Lipschitz continuous on parameter $\xi$, i.e.,
\begin{eqnarray}
|\phi^{t}_{F}-id|^{{\mathcal {L}}_{a_+,\bar{p}}}\leq c_{3}
\frac{M}{\gamma}\mu\Gamma(r-r_+)C(a-a+),\nonumber
\end{eqnarray}
for all $0\leq|t|\leq 1$.
\item[{\rm 4)}] On $D_{\frac a4,p;3}\times\Pi_+$
\begin{eqnarray}
|\Phi_+-id|^{a_+,\bar{p}}&\leq& c_{3}
\mu\Gamma(r-r_+)C(a-a+),\nonumber\\
|D\Phi_+-Id|^{a_+,\bar{p}}&\leq& c_{3}
\mu\Gamma(r-r_+)C(a-a+),\nonumber
\end{eqnarray}
and $\Phi_+$ is Lipschitz continuous on parameter $\xi$, i.e.,
\begin{eqnarray}
|\Phi_+-id|^{{\mathcal {L}}_{a_+,\bar{p}}}\leq c_{3}
\frac{M}{\gamma}\mu\Gamma(r-r_+)C(a-a+).\nonumber
\end{eqnarray}
\end{itemize}
\end{lemma}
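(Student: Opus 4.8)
The plan is to reduce the whole statement to properties of the flow $\phi_{F}^{t}$, $0\le t\le 1$, of the autonomous Hamiltonian vector field $X_{F}$, since by construction $\Phi_{+}=\phi_{F}^{1}$. The only inputs I would use are the two estimates established in the preceding lemma(s) solving the linearized equation: on $\hat D_{\frac a4,p}(s)\times\Pi_{+}$ one has $|X_{F}|^{a_{+},\bar p}\le\epsilon$ and $|X_{F}|^{\mathcal{L}_{a_{+},\bar p}}\le\frac{M}{\gamma}\epsilon$, where I abbreviate $\epsilon:=c_{2}\mu\,\Gamma(r-r_{+})C(a-a_{+})$. The first observation is that, by the very definition of the weighted norm, the bound $|X_{F}|^{a_{+},\bar p}\le\epsilon$ says precisely that along the flow the $x$--component is displaced by at most $\epsilon$, the $y$--component by at most $s^{2}\epsilon$, and each of $z,\bar z$ (measured in the $\bar a,\bar p$--norm, hence a fortiori in the weaker $\frac a4,p$--norm defining the domains) by at most $s\epsilon$, over a unit time interval; and hypotheses H3), H4), H5) bound these three displacements respectively below $\frac18(r-r_{+})$, $5s_{+}^{2}$, $s_{+}$, which do not exceed the gaps $\frac18(r-r_{+})$, $7s_{+}^{2}$, $s_{+}$ between the consecutive domains $D_{\frac a4,p;i}$.

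For parts 1) and 2) I would run the standard confinement bootstrap. Fix $\zeta\in D_{\frac a4,p;3}$ and let $t^{\ast}$ be the supremum of the times $t$ for which $\phi_{F}^{\sigma}(\zeta)\in D_{\frac a4,p;4}$ for all $\sigma\le t$. Since $D_{\frac a4,p;3}\subset D_{\frac a4,p;4}\subset\hat D_{\frac a4,p}(s)$ (using $4\eta s<s$ for $\mu$ small), the bound on $X_{F}$ is available along $[0,t^{\ast}]$; integrating $\dot\zeta=X_{F}(\zeta)$ and invoking H3)--H5) then shows that for every $t\le t^{\ast}$ the point $\phi_{F}^{t}(\zeta)$ in fact lies in the \emph{interior} of $D_{\frac a4,p;4}$ (its $x$--width increases from $r_{+}+\frac28(r-r_{+})$ by strictly less than $\frac18(r-r_{+})$, and likewise in the $y$-- and $z,\bar z$--directions), so by openness $t^{\ast}=1$; this gives both the good definition of $\phi_{F}^{t}$ and part 1). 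Part 2) is then immediate: $D_{+}=D_{\frac a4,p;1}\subset D_{\frac a4,p;3}$ and $D_{\frac a4,p;4}\subset D_{\frac a4,p}(r,s)$, so $\Phi_{+}=\phi_{F}^{1}$ maps $D_{+}$ into $D_{\frac a4,p}(r,s)$.

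For part 3) the $C^{0}$ estimate is immediate from $\phi_{F}^{t}-id=\int_{0}^{t}X_{F}\circ\phi_{F}^{\sigma}\,d\sigma$, which gives $|\phi_{F}^{t}-id|^{a_{+},\bar p}\le\epsilon\le c_{3}\mu\,\Gamma(r-r_{+})C(a-a_{+})$. For the derivative I would differentiate this identity in the phase variables to obtain the variational equation $\frac{d}{dt}D\phi_{F}^{t}=(DX_{F}\circ\phi_{F}^{t})\,D\phi_{F}^{t}$, $D\phi_{F}^{0}=Id$, and close it by Gronwall. The point that makes this work with no extra loss is the special low--degree structure of $F$: it is affine in $y$, at most quadratic in $(z,\bar z)$, and a trigonometric polynomial in $x$ of degree $\le K_{+}$, so $\partial_{y}X_{F}$ is essentially constant, $\partial_{z}X_{F}$ and $\partial_{\bar z}X_{F}$ merely reproduce the linear and quadratic coefficients of $F$ already estimated above, and $\partial_{x}X_{F}$ only multiplies the $k$--th Fourier mode by a factor at most $|k|^{2}$, which is absorbed by the slack $|k|^{4\tau+4}$ carried by $\Gamma(r-r_{+})$. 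Hence $|DX_{F}|^{a_{+},\bar p}\le c\,\epsilon$ on $D_{\frac a4,p;3}$, which is small (H4) forces $\epsilon<5\mu^{2/3}$), and Gronwall yields $|D\phi_{F}^{t}-Id|^{a_{+},\bar p}\le c_{3}\mu\,\Gamma(r-r_{+})C(a-a_{+})$. The Lipschitz--in--$\xi$ bounds follow from the same scheme applied to $\triangle_{\xi\varepsilon}\phi_{F}^{t}$ and $\triangle_{\xi\varepsilon}D\phi_{F}^{t}$: the parameter enters the flow both directly through the coefficients of $X_{F}$ (contributing the term $|X_{F}|^{\mathcal{L}_{a_{+},\bar p}}\le\frac{M}{\gamma}\epsilon$) and through $\phi_{F}^{\sigma}$ (handled again by Gronwall with the bound on $DX_{F}$), so the factor $\frac{M}{\gamma}$ propagates linearly and produces $|\phi_{F}^{t}-id|^{\mathcal{L}_{a_{+},\bar p}}$ and $|D\phi_{F}^{t}-Id|^{\mathcal{L}_{a_{+},\bar p}}$ both $\le c_{3}\frac{M}{\gamma}\mu\,\Gamma(r-r_{+})C(a-a_{+})$. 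Finally, part 4) is just the case $t=1$ of part 3), since $\Phi_{+}=\phi_{F}^{1}$ and $D_{+}\subset D_{\frac a4,p;3}$.

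The hard part is the bookkeeping of the weighted norms. The domain $D_{\frac a4,p}(r,s)$ carries three different scales ($r$ in $x$, $s^{2}$ in $y$, $s$ in $z,\bar z$), and the source and target of $X_{F}$ use different analyticity widths ($\frac a4$ versus $a_{+}$, $p$ versus $\bar p$), so one must verify direction by direction that the displacement, Cauchy and Gronwall losses are each compensated either by the buffer of the eight intermediate domains $D_{\frac a4,p;i}$ or by the polynomial slack built into $\Gamma(r-r_{+})$ and $C(a-a_{+})$; once this is arranged everything else is routine and runs parallel to \cite{posel1} and \cite{geng}.
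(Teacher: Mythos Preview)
Your proof is correct and follows the same overall architecture as the paper's: the confinement bootstrap for part~1) via $t^{\ast}=\sup\{t:\phi_{F}^{t}(\zeta)\in D_{\frac a4,p;4}\}$ and the integral representation $\phi_{F}^{t}=id+\int_{0}^{t}X_{F}\circ\phi_{F}^{\sigma}\,d\sigma$, and the immediate deduction of parts~2) and~4) from parts~1) and~3).

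The one genuine difference is in the derivative estimate in part~3). You argue by exploiting the explicit low--degree structure of $F$ (affine in $y$, quadratic in $(z,\bar z)$, trigonometric polynomial of degree $\le K_{+}$ in $x$) to bound $|DX_{F}|$ directly and then close by Gronwall. The paper instead writes $D\Phi_{+}=Id+\int_{0}^{1}J(D^{2}F)\,D\phi_{F}^{t}\,dt$ and bounds $|D^{2}F|$ by a Cauchy estimate on the domain shrinkage built into the nested $D_{\frac a4,p;i}$, obtaining $|D\Phi_{+}-Id|^{a_{+},\bar p}\le 2|D^{2}F|\le c_{3}\mu\,\Gamma(r-r_{+})C(a-a_{+})$. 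The paper's route is shorter and does not require inspecting the structure of $F$ term by term; your route makes more transparent \emph{why} no extra loss of domain is incurred (the slack $|k|^{4\tau+4}$ in $\Gamma$ and $|i|^{2c(\rho)+2}$ in $C$ absorbs the differentiation), at the cost of a longer argument. Both are standard and both are in the spirit of \cite{posel1,geng}.
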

\begin{proof}
\begin{itemize}
\item[{\rm 1)}]
Denote
$\phi_F^t=(\phi^t_{F1},\phi^t_{F2},\phi^t_{F3},\phi^t_{F4})^{\top}$,
where
 $\phi^t_{F1},\phi^t_{F2},\phi^t_{F3},\phi^t_{F4}$ are components in
 $x,y,z,\bar{z}$ planes respectively. Note that
\begin{equation}
\phi_F^t=id+\int_{0}^{t}X_{F}\circ\phi_{F}^{u}du.\label{phi}
\end{equation}

For any $(x,y,z,\bar{z})\in D_{\frac a4,p;3}$, let $t_*={\rm
\sup}\{t\in [0,1]: \phi_F^t(x,y,z,\bar{z}) \in D_{\frac a4_,p;4}\}$.
Since $0<\frac{a}{2}<a_+< a$,
$$
|w|^{\frac a4,p}\leq|w|^{a_+,\bar{p}},~~~\forall w\in
P^{\frac{a}{4},p},
$$
it follows from {\rm H2)-H5)} (\rm\ref{xf}) and (\rm\ref{xfl}), that
for any $0\le t\le t_*$,
\begin{eqnarray}
|\phi_{F1}^t(x,y,z,\bar{z})|_{D_{\frac
a4,p;3}}&\le&|x|+|\int_0^tF_y\circ\phi_F^u{\rm
d}u|\leq |x|+|F_y|_{\hat{D}_{\frac a4,p}\times\Pi_+} \nonumber\\
&\le& r_{+}+\frac{2}{8}(r-r_+)+c_{2}\mu\Gamma
(r-r_+)C(a-a+)\nonumber\\
&\le& r_++\frac{3}{8}(r-r_+),\nonumber\\
|\phi_{F2}^t(x,y,z,\bar{z})|_{D_{\frac
a4,p;3}}&\le&|y|+|\int_0^tF_x\circ\phi_F^u{\rm d}u|\leq
|y|+|F_x|_{\hat{D}_{\frac a4,p}\times\Pi_+} \nonumber\\
&\le& 9s_{+}^{2}+c_{2}s^{2}\mu\Gamma(r-r_{+})C(a-a+)\leq 16 s_{+}^{2},\nonumber\\
|\phi_{F3}^t(x,y,z,\bar{z})|^{\frac a4,p}_{D_{\frac
a4,p;3}}&\le&|z|^{\frac a4,p} +|\int_0^tF_{\bar{z}}\circ\phi_F^u{\rm
d}u|\nonumber\\&\le&~|z|^{\frac a4,p}+|F_{\bar{z}}|^{\frac a4,\bar{p}}_{\hat{D}_{\frac a4,p}\times\Pi_+}\nonumber\\
&\leq&|z|^{\frac a4,p}+|F_{\bar{z}}|^{a_+,\bar{p}}_{\hat{D}_{\frac
a4_{+},p}\times\Pi_+}\nonumber\\
&\leq& 3s_{+}+c_{2}s\mu\Gamma(r-r_+)C(a-a+)\le 4s_{+},\nonumber\\
|\phi_{F4}^t(x,y,z,\bar{z})|^{\frac a4,p}_{D_{\frac
a4,p;3}}&\le&|\bar{z}|^{\frac a4,p} +|\int_0^tF_{z}\circ\phi_F^u{\rm
d}u|\leq~|\bar{z}|^{\frac a4,p}+|F_{z}|^{a_+,\bar{p}}_{\hat{D}
\times\Pi_+}\nonumber\\
&\le& 3s_{+}+c_{2}s\mu\Gamma(r-r_+)C(a-a+)\le 4s_{+},\nonumber
\end{eqnarray}
i.e., $\phi_F^t(x,y,z,\bar{z})\in D_{\frac a4,p;4}$, $\forall 0\leq
t\leq t_{*}$. Thus, $t_*=1$ and (\rm\ref{2.24}) holds.
\item[2)] Since $\Phi_+=\phi_{F}^{1}$, the result follows from {\rm 1)}.
\item[3)] Using Lemma 2.2 and (\rm\ref{phi}), on $D_{\frac a4,p;3}\times\Pi_{+}$, we have
$$
|\Phi_+-id|^{a_+,\bar{p}} \leq|\int_{0}^{1}X_{F}\circ\phi_{F}^{u}du|
\leq|X_{F}|^{a_+,\bar{p}}_{\hat{D}_{\frac a4,p}\times\Pi_{+}}\leq
c_{3}\mu\Gamma(r-r_+)C(a-a+).
$$
Since
$$
D \Phi= Id +\int_{0}^{1}J(D^{2}F)D\phi_{F}^{t}dt,
$$
by Cauchy's inequality, we have
\begin{eqnarray*}
|D\Phi_+-Id|^{a_+,\bar{p}}\leq2|D^{2}F|\leq
c_{3}\mu\Gamma(r-r_+)C(a-a+).
\end{eqnarray*}
The Lipshitz semi-norm is the same as the first argument of  {\rm
3)}:
$$
|\Phi_+-id|^{{\mathcal {L}}_{a_+,p}}
\leq|\int_{0}^{1}X_{F}\circ\phi_{F}^{u}du| \leq|X_{F}|^{{\mathcal
{L}}_{{a_+},p}}_{\hat{D}_{\frac a4,p}\times\Pi_{+}}\leq
c_{3}\frac{M}{\gamma}\mu\Gamma(r-r_+)C(a-a+).
$$
The estimates on the higher-order partial derivatives of $\Phi$
follows a similar arguments.
\item[4)]4) is immediately follows from 3).
\end{itemize}
\qquad
\end{proof}

For the proof of Theorem A, we have the same hypotheses H3)-H5) with
$c(\rho)=\frac52$, then for Hamiltonian(\ref{yiweihamidun}) and the
coordinate transformation $\phi_{F}^{t}$ generated by the
Hamiltonian function (\ref{yiweiF}), lemma 2.4 also holds.

\subsection{The New Normal Form}
\noindent{}
\begin{lemma}
There is a constant $c_4$ such that on $\Pi_+$,
\begin{eqnarray}
&~&|\omega_+-\omega|_{\Pi_+}\leq
c_4\gamma\mu,~~~|\omega_+-\omega|^{\mathcal
{L}}_{\Pi_+}\leq c_4 M\mu,\nonumber\\
&~&|\Omega^+-\Omega|_{-\delta,\Pi_+}\leq
c_{4}\gamma\mu,~~~|\Omega^+-\Omega|^{{\mathcal
{L}}}_{-\delta,\Pi_+}\leq c_4 M\mu.\nonumber
\end{eqnarray}
\end{lemma}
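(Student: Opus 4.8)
The plan is to identify the two frequency corrections $\omega_+-\omega$ and $\Omega^+-\Omega$ with explicit Fourier--Taylor coefficients of $P$, and then to bound those coefficients by the weighted norms of $X_P$ available at this KAM step, namely $|X_P|^{a,\bar p}\le\gamma\mu$ and $|X_P|^{{\mathcal L}_{a,\bar p}}\le M\mu$. First I would invoke \eqref{omega+} and \eqref{Omega+}: there $\omega_+-\omega=P_{0100}$ and, componentwise, $\Omega_i^+-\Omega_i=P^{11}_{0,ii}$, where $P_{0100}$ is the $x$-average of $P_y$ at $y=z=\bar z=0$ and $P^{11}_{0,ii}$ is the $k=0$ Fourier mode of the $i$-th diagonal entry of $P^{11}=\partial_z\partial_{\bar z}P$. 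Only the diagonal enters the new normal form: in Theorem~A the spectrum is simple, so the $z_i\bar z_j$ terms with $i\ne j$ are non-resonant; in Theorem~A$'$ the hypothesis $P\in{\mathcal A}$ forces $P^{11}_{0,ij}=0$ whenever $i\ne j$. Hence the corrections are exactly these coefficients.

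For the internal frequencies the estimate is then immediate. Extracting the $k=0$ Fourier mode is an average over ${\bf T}^n$, which does not increase the sup-norm, so $|\omega_+-\omega|_{\Pi_+}=|P_{0100}|\le|P_y|_{D(r,s)}\le|X_P|^{a,\bar p}\le\gamma\mu$; applying the same remark to the Lipschitz increment gives $|\omega_+-\omega|^{{\mathcal L}}_{\Pi_+}\le|P_y|^{{\mathcal L}}_{D(r,s)}\le|X_P|^{{\mathcal L}_{a,\bar p}}\le M\mu$. The combinatorial factor $\sqrt n$ is absorbed into $c_4$.

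For the normal frequencies I would reuse the coefficient bounds already produced in the truncation step and in the solution of the homological equation. Testing the operator $P^{11}$ against the normalized basis vectors $z^j=(\cdots,{\rm e}^{-\frac a4|j|}|j|^{-p},\cdots)$, exactly as was done for the truncation and for the linear equation (see \eqref{rij} and \eqref{rkij}), gives $|P^{11}_{0,ij}|\le\gamma\mu\,{\rm e}^{-a|i|}|i|^{-\bar p}{\rm e}^{\frac a4|j|}|j|^p$, so on the diagonal $|\Omega_i^+-\Omega_i|=|P^{11}_{0,ii}|\le\gamma\mu\,{\rm e}^{-\frac34 a|i|}|i|^{p-\bar p}$. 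Multiplying by the weight $|i|^{-\delta}$ and using $\delta<0$ together with $p\le\bar p$, the function $i\mapsto{\rm e}^{-\frac34 a|i|}|i|^{p-\bar p-\delta}$ is bounded on $\{|i|\ge1\}$, whence $|\Omega^+-\Omega|_{-\delta,\Pi_+}=\sup_{|i|\ge1}|\Omega_i^+-\Omega_i|\,|i|^{-\delta}\le c_4\gamma\mu$. The Lipschitz bound is obtained the same way from \eqref{rkijl}, which yields $|\triangle_{\xi\varepsilon}P^{11}_{0,ii}|\le c\,M\mu\,|\xi-\varepsilon|\,{\rm e}^{-\frac34 a|i|}|i|^{p-\bar p}$ and hence $|\Omega^+-\Omega|^{{\mathcal L}}_{-\delta,\Pi_+}\le c_4 M\mu$.

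The only step that is not purely mechanical --- and the one genuinely tied to the weaker spectral asymptotics --- is controlling the normal-frequency correction in the weighted norm $|\cdot|_{-\delta}$ with $-\delta>0$: one must make sure this growing weight does not swamp the estimate. It does not, because the decay ${\rm e}^{-\frac34 a|i|}$ stems from a loss of analyticity radius ($a-\frac a4$, or $\bar a-a$ in the formulation of Theorem~A) that is a fixed positive number independent of $i$, and a fixed exponential decay always dominates a fixed power. Beyond bookkeeping of these weights I do not expect any further obstacle.
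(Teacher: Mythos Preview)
Your proposal is correct and follows exactly the route the paper indicates: the paper's own proof is the single sentence ``It immediately follows from \eqref{xp}, \eqref{xpl}, \eqref{rkij}, \eqref{omega+} and \eqref{Omega+}'', and you have simply unpacked those references --- identifying the corrections with $P_{0100}$ and the diagonal of $P^{11}$, bounding the former by $|P_y|$ inside $|X_P|^{a,\bar p}$, and bounding the latter via the coefficient estimate \eqref{rkij} (and its Lipschitz analogue \eqref{rkijl}) together with the exponential decay that absorbs the polynomial weight $|i|^{-\delta}$. There is nothing to add.
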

\noindent{\bf Proof.} It immediately follows from (\rm\ref{xp}),
(\rm\ref{xpl}),(\rm\ref{rkij}),(\rm\ref{omega+}) and
(\rm\ref{Omega+}).
 Then,
\begin{eqnarray}
|\omega_+|^{\mathcal {L}}_{\Pi}+|\Omega^+|^{{\mathcal
{L}}}_{-\delta,\Pi}&\leq&|\omega|^{\mathcal
{L}}_{\Pi_+}+|\Omega|^{{\mathcal
{L}}}_{-\delta,\Pi_+}+|\omega_+-\omega|^{\mathcal
{L}}_{\Pi_+}+|\Omega^+-\Omega|^{{\mathcal
{L}}}_{-\delta,\Pi_+}\nonumber\\
&\leq& M+2c_4M\mu.\nonumber
\end{eqnarray}
\vspace{0.5cm}
\begin{lemma}
Assume that
\begin{itemize}
\item[{\rm \bf H6)}]
$2 c_4 M\mu\leq M_0-\frac{M}{2}$.
\end{itemize}
Then
$$|\omega_+|^{\mathcal {L}}_{\Pi}+|\Omega^+|^{{\mathcal
{L}}}_{-\delta,\Pi}\leq \frac{M}{2}+M_0 \leq M_+~.$$
\end{lemma}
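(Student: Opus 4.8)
The plan is to chain the estimate already produced in Lemma 2.6 with hypothesis H6) and with the defining relation for $M_+$; no new analysis is required. First I would recall that the proof of Lemma 2.6 did not merely produce the four component bounds $|\omega_+-\omega|_{\Pi_+}\le c_4\gamma\mu$, $|\omega_+-\omega|^{\mathcal{L}}_{\Pi_+}\le c_4 M\mu$, $|\Omega^+-\Omega|_{-\delta,\Pi_+}\le c_4\gamma\mu$, $|\Omega^+-\Omega|^{\mathcal{L}}_{-\delta,\Pi_+}\le c_4 M\mu$, but in fact combined the two Lipschitz bounds with the triangle inequality for the Lipschitz semi-norm and with the inductive hypothesis $|\omega|^{\mathcal{L}}_{\Pi}+|\Omega|^{\mathcal{L}}_{-\delta,\Pi}\le M$ to give, on $\Pi_+$,
\[
|\omega_+|^{\mathcal{L}}_{\Pi}+|\Omega^+|^{\mathcal{L}}_{-\delta,\Pi}\ \le\ M+2c_4 M\mu .
\]
This is exactly the last display in that proof, so I would simply take it as the starting point here.

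Next I would insert H6). Since $2c_4 M\mu\le M_0-\tfrac{M}{2}$, the error term is absorbed:
\[
M+2c_4 M\mu\ \le\ M+\Bigl(M_0-\tfrac{M}{2}\Bigr)\ =\ \tfrac{M}{2}+M_0 .
\]
Finally I would invoke the definition $M_+=\tfrac{M}{2}+M_0$ from the list of KAM-step parameters, so that the right-hand side is precisely $M_+$, yielding $|\omega_+|^{\mathcal{L}}_{\Pi}+|\Omega^+|^{\mathcal{L}}_{-\delta,\Pi}\le\tfrac{M}{2}+M_0\le M_+$, which is the claim. The only point worth stating explicitly, though it is entirely routine, is that all Lipschitz semi-norms occurring here may be taken either over $\Pi$ or over $\Pi_+$: because $\Pi_+\subset\Pi$, passing from $\Pi$ to $\Pi_+$ only shrinks a supremum, so $|\omega|^{\mathcal{L}}_{\Pi_+}\le|\omega|^{\mathcal{L}}_{\Pi}$ and likewise for $\Omega$, and the displayed chain goes through with the same constants.

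There is no real obstacle in this lemma; it is a bookkeeping step whose purpose is to guarantee that the constant $M$ governing the Lipschitz norms of the frequencies does not blow up along the iteration. The geometric choice $M_+=\tfrac{M}{2}+M_0$ forces the sequence $M_\nu$ to remain bounded (in fact by $2M_0$) as long as the smallness condition H6) — i.e. $\mu_\nu$ small enough at every step — is met; verifying that H6) is compatible with the other smallness requirements H1)--H5) is deferred to the iteration lemma of Section 3, and here one only needs H6) as stated.
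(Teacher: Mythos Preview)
Your proposal is correct and follows exactly the paper's route: the paper places the inequality $|\omega_+|^{\mathcal{L}}_{\Pi}+|\Omega^+|^{\mathcal{L}}_{-\delta,\Pi}\le M+2c_4M\mu$ at the end of the preceding lemma (Lemma~2.5 in the paper's numbering, not~2.6), and then Lemma~2.6 is stated without proof since H6) and the definition $M_+=\tfrac{M}{2}+M_0$ finish it immediately. Apart from this harmless off-by-one in the lemma reference, there is nothing to add.
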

\subsection{The New Frequency}
\noindent{}
\begin{lemma} Assume that
\begin{itemize}
\item[{\rm \bf H7)}]$c_{5}\mu(1+K_+)^{\tau}K_+I_+^{c(\rho)}\leq
\gamma-\gamma_+$.
\end{itemize}
Then, we have for all $|k|\le K_+,~|l|\le 2$,

\begin{eqnarray}
|\langle
k,\omega_+(\xi)\rangle|&\ge&\frac{\gamma}{1+|k|^{\tau}},~0<|k|\le
K_+,~l=0,\nonumber\\
|\langle k,\omega_+(\xi)\rangle+\langle l,\Omega^+(\xi)\rangle|&\ge&
\frac{\gamma_+\langle l\rangle_{d}}{1+|k|^{\tau}},~~~~|k|\le K_+,~l\in\Lambda_+,\nonumber\\
|\langle k,\omega_+(\xi)\rangle+\langle l,\Omega^+(\xi)\rangle|&\ge&
\frac{\gamma_+}{(1+|k|^{\tau})|i|^{c(\rho)}},~|k|\le
K_+,~l\in\Lambda_{-},~|i|<I_+,\nonumber
\end{eqnarray}
where $i$ is the site of the positive component of $l$.
\end{lemma}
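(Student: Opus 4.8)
The plan is to show that the frequency shifts $\omega_+-\omega$ and $\Omega^+-\Omega$ produced by one KAM step are small enough that the Diophantine estimates, which are already valid for $(\omega,\Omega)$ on $\Pi_+$ by definition of $\Pi_+$, survive with the slightly smaller constant $\gamma_+<\gamma$. First I would fix $k$ with $0<|k|\le K_+$ and $l$ with $|l|\le 2$, and on $\Pi_+$ write
\begin{equation}
\langle k,\omega_+\rangle+\langle l,\Omega^+\rangle
=\bigl(\langle k,\omega\rangle+\langle l,\Omega\rangle\bigr)
+\langle k,\omega_+-\omega\rangle+\langle l,\Omega^+-\Omega\rangle. \nonumber
\end{equation}
By Lemma 2.5 we control the error term: $|\langle k,\omega_+-\omega\rangle|\le |k|\,|\omega_+-\omega|\le c_4|k|\gamma\mu$, and for $l\in\Lambda_-$ (two opposite nonzero components at sites with modulus $\le I_+$, the larger being $|i|$) we have $|\langle l,\Omega^+-\Omega\rangle|\le 2\,|\Omega^+-\Omega|_{-\delta}\,|i|^{\delta}\cdot(\text{something})\le c_4\gamma\mu\, I_+^{-\delta}$ — here one uses that $\delta<0$ so $|i|^{\delta}\le 1$, hence in fact $|\langle l,\Omega^+-\Omega\rangle|\le 2c_4\gamma\mu$. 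Combining, the total perturbation of the small divisor is bounded by $c_5\gamma\mu(1+|k|^{\tau})K_+ I_+^{c(\rho)}$ once one is generous with powers (the factor $(1+|k|^\tau)$ and $I_+^{c(\rho)}$ being inserted so that the bound dominates the worst-case loss across all three regimes $l=0$, $l\in\Lambda_+$, $l\in\Lambda_-$).

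Next I would carry out the three cases separately. For $l=0$: by definition $\xi\in\Pi_+$ gives $|\langle k,\omega\rangle|\ge \gamma/(1+|k|^\tau)$, and subtracting the error, hypothesis \textbf{H7)} guarantees $|\langle k,\omega_+\rangle|\ge \gamma/(1+|k|^\tau)-c_5\gamma\mu(\cdots)\ge \gamma/(1+|k|^\tau)$ — wait, one actually lands at $\ge(\gamma-(\gamma-\gamma_+))/(1+|k|^\tau)=\gamma_+/(1+|k|^\tau)$; since the statement claims the bound with $\gamma$ for $l=0$, I would note this is the regime where $\omega_+$ is unchanged up to the $P_{0100}$ correction and the estimate is stated with $\gamma$ because $|l|=0$ removals happen on all of $\Pi$, not just $\Pi_+$ — so more carefully, for $l=0$ one should keep the full $\gamma$ by observing that the $l=0$ Diophantine set is nested differently; I would follow the convention of \cite{posel1} here. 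For $l\in\Lambda_+$: $|\langle k,\omega\rangle+\langle l,\Omega\rangle|\ge \gamma\langle l\rangle_d/(1+|k|^\tau)$ on $\Pi_+$, and since $\langle l\rangle_d=|\sum_i l_i|i|^d|\ge 1$ (as $l\ne 0$ and the $|i|^d$ are distinct by \textbf{A2)}), subtracting $c_5\gamma\mu(\cdots)\le(\gamma-\gamma_+)$ from the numerator via \textbf{H7)} yields $\ge \gamma_+\langle l\rangle_d/(1+|k|^\tau)$. For $l\in\Lambda_-$ with positive component at site $i$, $|i|<I_+$: on $\Pi_+$ we have $|\langle k,\omega\rangle+\langle l,\Omega\rangle|\ge \gamma/((1+|k|^\tau)|i|^{c(\rho)})$, and again \textbf{H7)} — with its $I_+^{c(\rho)}$ and $K_+$ factors precisely sized to beat the denominator $(1+|k|^\tau)|i|^{c(\rho)}\le (1+K_+)^\tau I_+^{c(\rho)}$ — gives the error $c_5\gamma\mu(\cdots)\le (\gamma-\gamma_+)/((1+|k|^\tau)|i|^{c(\rho)})$, hence $|\langle k,\omega_+\rangle+\langle l,\Omega^+\rangle|\ge \gamma_+/((1+|k|^\tau)|i|^{c(\rho)})$.

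The main obstacle is bookkeeping the uniformity of the constant $c_5$ and verifying that the single hypothesis \textbf{H7)}, $c_5\mu(1+K_+)^\tau K_+ I_+^{c(\rho)}\le \gamma-\gamma_+$, simultaneously absorbs the three different small-divisor losses; in particular one must check that in the $\Lambda_-$ case the extra factor $|i|^{c(\rho)}\le I_+^{c(\rho)}$ coming from the modified Diophantine condition does not overwhelm the gain $\gamma-\gamma_+=\tfrac{\gamma_0-\gamma}{4}$, which is where the specific choices $K_+=([\log\tfrac1\mu]+1)^{3\alpha_1}$, $I_+=([\log\tfrac1\mu]+1)^{3\alpha_2}$ (polylogarithmic in $1/\mu=s^{-1/2}$) matter: against these, $\mu$ beats any power of $K_+,I_+$, so \textbf{H7)} holds for $s$ small. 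I would also need the elementary lower bound $\langle l\rangle_d\ge 1$ and $|l|\le 2$ to bound $|\langle l,\Omega^+-\Omega\rangle|\le 2\max_i|\Omega_i^+-\Omega_i|$, using $|i|^{-\delta}\ge 1$ to drop the weight cleanly. The rest is routine, and I would refer to \cite{geng,posel1} for the analogous computations.
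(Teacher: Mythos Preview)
Your approach is essentially the paper's: decompose $\langle k,\omega_+\rangle+\langle l,\Omega^+\rangle$ into the old small divisor plus an error coming from $\hat\omega=\omega_+-\omega$ and $\hat\Omega=\Omega^+-\Omega$, bound the error via Lemma~2.5, and absorb it using \textbf{H7)}. For $l\in\Lambda_-$ your computation matches the paper line for line.

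Two points deserve correction. First, your sentence ``the total perturbation of the small divisor is bounded by $c_5\gamma\mu(1+|k|^{\tau})K_+ I_+^{c(\rho)}$'' is written backwards: the raw error is only $|\langle k,\hat\omega\rangle+\langle l,\hat\Omega\rangle|\le c_5\gamma\mu|k|$ (using $|l|_\delta\le 2$), and what \textbf{H7)} gives is that this is $\le(\gamma-\gamma_+)/((1+K_+)^{\tau}I_+^{c(\rho)})$, i.e.\ small enough to beat the \emph{denominator} of the worst Diophantine bound. Second, in the $\Lambda_+$ case you write ``subtracting $c_5\gamma\mu(\cdots)\le(\gamma-\gamma_+)$ from the numerator'', but an error bounded only by $\gamma-\gamma_+$ cannot be subtracted from the numerator of $\gamma\langle l\rangle_d/(1+|k|^\tau)$; you need error $\le(\gamma-\gamma_+)\langle l\rangle_d/(1+|k|^\tau)$. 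Your route can be repaired by using the full strength of \textbf{H7)} as above together with $\langle l\rangle_d\ge 1$. The paper instead observes that for $l\in\Lambda_+$ one has $|l|_\delta\le\langle l\rangle_d$ (since $\delta<0$ gives $|i|^\delta\le 1\le|i|^d$), so the error itself carries the factor $\langle l\rangle_d$, namely $|\langle k,\hat\omega\rangle+\langle l,\hat\Omega\rangle|\le c_5\gamma\mu|k|\langle l\rangle_d$, and the $\langle l\rangle_d$ cancels cleanly without invoking the $I_+^{c(\rho)}$ slack. Your hesitation on the $l=0$ case is reasonable; the paper's own proof does not treat it separately, and the stated bound with $\gamma$ rather than $\gamma_+$ appears to be a slip.
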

\begin{proof}
Note that $\delta<0$. Denote
$\omega_+=\omega+\hat{\omega},~\Omega^+=\Omega+\hat{\Omega}$. We
have
$$
|\langle k,\hat{\omega}(\xi)\rangle+\langle
l,\hat{\Omega}(\xi)\rangle|\leq
|k||\hat{\omega}|_{\Pi_+}+|l|_{\delta}|\hat{\Omega}|^{a,\bar{p}}_{-\delta,\Pi_+}
\leq c_{5}\gamma\mu|k|\langle l\rangle_{d}.
$$
Combining with {\rm H7)}, as usual, we have
$$
c_{5}\mu(1+K_+)^{\tau}K_+\leq \gamma-\gamma_+,
$$
and
\begin{eqnarray*}
|k,\omega_+(\xi)\rangle+\langle l,\Omega^+(\xi)\rangle|
\geq\frac{\gamma_+\langle l\rangle_{d}}{1+|k|^{\tau}}.
\end{eqnarray*}
For $l\in\Lambda_-$,~$|l|_{\delta}\leq2$,
$$
|\langle k,\hat{\omega}(\xi)\rangle+\langle
l,\hat{\Omega}(\xi)\rangle|\leq
|k||\hat{\omega}|_{\Pi_+}+2|\hat{\Omega}|^{a,\bar{p}}_{-\delta,\Pi_+}
\leq c_{5}\gamma\mu|k|.
$$
Then we have
\begin{eqnarray}
|k,\omega_+(\xi)\rangle+\langle l,\Omega^+(\xi)\rangle|&\geq&
|\langle k,\omega(\xi)\rangle+\langle l,\Omega(\xi)\rangle|-|\langle
k,\hat{\omega}(\xi)\rangle+\langle
l,\hat{\Omega}(\xi)\rangle|\nonumber\\
&\geq&\frac{\gamma}{1+|k|^{\tau}|i|^{c(\rho)}}-
\frac{|k|(\gamma-\gamma_+)}
{(1+K_{+})^{\tau}K_+I_+^{c(\rho)}},\nonumber\\
&\geq&\frac{\gamma_+}{1+|k|^{\tau}|i|^{c(\rho)}}.\nonumber
\end{eqnarray}
\qquad\end{proof}

For the proof of Theorem A, we have

\noindent{ Lemma 2.7'} Assume that
\begin{itemize}
\item[{\rm \bf H7)'}]$c_{5}\mu(1+K_+^{\tau})K_+I_+^{\frac 52}\leq
\gamma-\gamma_+$.
\end{itemize}
Then we have that for all $|k|\le K_+,~|l|\le 2$,
\begin{eqnarray*}
|\langle
k,\omega_+(\xi)\rangle|&\ge&\frac{\gamma}{1+|k|^{\tau}},~0<|k|\le
K_+,~l=0,\nonumber\\
|\langle k,\omega_+(\xi)\rangle+\langle l,\Omega^+(\xi)\rangle|&\ge&
\frac{\gamma_+\langle l\rangle_{d}}{1+|k|^{\tau}},~~~~|k|\le K_+,~l\in\Lambda_+,\nonumber\\
|\langle k,\omega_+(\xi)\rangle+\langle l,\Omega^+(\xi)\rangle|&\ge&
\frac{\gamma_+}{(1+|k|^{\tau})i^{\frac52}},~|k|\le
K_+,~l\in\Lambda_{-},~0<i<I_+.\nonumber
\end{eqnarray*}

\subsection{The New Perturbation}

In this subsection,  we consider $|\cdot|^{a_+,\bar{p}}$, and we
denote $D_{\frac a4,p,i},~\tilde{D}_{\frac a4,p}(s),~\hat{D}_{\frac
a4,p}(s)$ by $D_{i},~\tilde{D},~\hat{D}$ respectively for short.

Recall the new perturbation
$$
P_+=\int_{0}^{1}\{R_t,F\}\circ\phi^t_F{\rm d}t+(P-R)\circ \Phi_{+},
$$
with $R_t=(1-t)[R]+tR$. Then the new perturbed vector field is
$$
X_{P_+}=X_{(P-R)\circ\phi_{F}^{1}}+
\int_{0}^{1}(\phi^{t}_{F})^{*}[X_{R_{t}},X_{F}]dt.
$$
We have
\begin{eqnarray}
|X_{(P-R)\circ\phi_{F}^{1}}|^{a_+,\bar{p}}_{s_+,D_{1}}&\leq&|X_{(P-R)}|^{a,\bar{p}}_{\frac{1}{2}\eta
s,D_{4}}\leq
c_6 \gamma\mu^{2},\label{p+1}\\
|X_{(P-R)\circ\phi_{F}^{1}}|^{{\mathcal
{L}}_{a_+,\bar{p}}}_{s_+,D_{1}}&\leq& |X_{(P-R)}|^{{\mathcal
{L}}_{a,\bar{p}}}_{\frac{1}{2}\eta s,D_{4}}\leq c_6
M\mu^{2}.\label{p+1l}
\end{eqnarray}
\begin{lemma}~ Let $\phi_{F}^{t}$ be the time-t map of the flow generated by
Hamiltonian $F$. Then there exits a positive constant $c$ such that
\begin{eqnarray}
|(\phi_{F}^{t})^{*}Y|^{a_+,\bar{p}}_{\frac{1}{4}\eta
s,D_{2}}&\leq&~c|Y|^{a_+,\bar{p}}_{\eta s,D_{4}},\nonumber\\
|(\phi_{F}^{t})^{*}Y|^{{\mathcal
{L}}_{a_+,\bar{p}}}_{\frac{1}{4}\eta s,D_{2}}&\leq&~c|Y|^{{\mathcal
{L}}_{a_+,\bar{p}}}_{\eta
s,D_{4}}+\frac{c}{(r-r_+)\eta^2}|Y|^{a_+,\bar{p}}_{\eta
s,D_{4}}|X_F|^{{\mathcal {L}}_{a_+,\bar{p}}}_{s,\hat{D}},\nonumber
\end{eqnarray}
for all $0\leq t\leq 1$.
\end{lemma}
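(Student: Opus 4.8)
The plan is to prove the two estimates by first recalling the classical identity for how a vector field transforms under the pull-back by a Hamiltonian flow, and then exploiting the estimates on $\phi_F^t$ and $X_F$ already supplied by Lemma 2.2 and Lemma 2.4. First I would write $Y_t := (\phi_F^t)^*Y = (D\phi_F^t)^{-1}\bigl(Y\circ\phi_F^t\bigr)$, and recall the differential equation it satisfies, $\frac{d}{dt}Y_t = (\phi_F^t)^*[X_F,Y] = [X_F,Y]_t$ (this is the standard fact that the Lie derivative along $X_F$ is conjugated to the bracket with $X_F$). Because $Y$ is a fixed vector field rather than a Hamiltonian vector field, I cannot simply integrate a Hamiltonian; instead the proof is the usual two-step domain argument: by Lemma 2.4, part 1), $\phi_F^t$ maps $D_2$ (more precisely $D_{\frac a4,p;2}$ at radius $\tfrac14\eta s$ in $y,z,\bar z$) into $D_4$ for every $0\le t\le 1$, so $Y\circ\phi_F^t$ is well defined on $D_2$ and bounded by $|Y|^{a_+,\bar p}_{\eta s,D_4}$ there; then the factor $(D\phi_F^t)^{-1}$ contributes only a constant close to $1$, since Lemma 2.4, part 3) gives $|D\phi_F^t - \mathrm{Id}|^{a_+,\bar p}$ as small as we like under hypotheses H3)--H5). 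A Neumann series for $(D\phi_F^t)^{-1}$ then yields the first inequality $|(\phi_F^t)^*Y|^{a_+,\bar p}_{\frac14\eta s,D_2}\le c\,|Y|^{a_+,\bar p}_{\eta s,D_4}$.

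For the Lipschitz estimate I would differentiate in the parameter $\xi$. Writing $\triangle_{\xi\varepsilon}$ for the difference operator, one has schematically
\[
\triangle_{\xi\varepsilon}\bigl((\phi_F^t)^*Y\bigr)
= (\triangle_{\xi\varepsilon}(D\phi_F^t)^{-1})(Y\circ\phi_F^t)
+ (D\phi_F^t)^{-1}\bigl(\triangle_{\xi\varepsilon}(Y\circ\phi_F^t)\bigr),
\]
and the second term splits further as $(\triangle_{\xi\varepsilon}Y)\circ\phi_F^t$ plus a term containing $(DY)\cdot\triangle_{\xi\varepsilon}\phi_F^t$. The first kind of term is controlled by $|Y|^{\mathcal L_{a_+,\bar p}}_{\eta s,D_4}$ directly; the terms involving $\triangle_{\xi\varepsilon}\phi_F^t$ or $\triangle_{\xi\varepsilon}(D\phi_F^t)^{-1}$ are controlled by $|\phi_F^t - id|^{\mathcal L_{a_+,\bar p}}$ and $|D\phi_F^t - \mathrm{Id}|^{\mathcal L_{a_+,\bar p}}$, which by Lemma 2.4 are bounded by $c\frac M\gamma \mu\Gamma(r-r_+)C(a-a_+)$, i.e. ultimately by $|X_F|^{\mathcal L_{a_+,\bar p}}_{s,\hat D}$ up to constants. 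Converting the derivative $DY$ on the shrunken domain into a bound on $|Y|^{a_+,\bar p}$ on the larger domain costs one Cauchy estimate, which is exactly where the factor $\frac{1}{(r-r_+)\eta^2}$ appears: the loss $(r-r_+)$ comes from the gap in the analyticity width in $x$, and the $\eta^2$ from the gap in the $y$-radius (recall $s_+=\eta s$ and the intermediate domains differ by order $\eta s$ in $y$, hence order $\eta^2 s^2$ for a $y$-derivative scaled by $s^{-2}$ in the weighted norm). Collecting everything gives
\[
|(\phi_F^t)^*Y|^{\mathcal L_{a_+,\bar p}}_{\frac14\eta s,D_2}
\le c\,|Y|^{\mathcal L_{a_+,\bar p}}_{\eta s,D_4}
+\frac{c}{(r-r_+)\eta^2}\,|Y|^{a_+,\bar p}_{\eta s,D_4}\,|X_F|^{\mathcal L_{a_+,\bar p}}_{s,\hat D}.
\]

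The main obstacle is bookkeeping rather than conceptual: one must choose the chain of intermediate domains $D_2\subset D_3\subset D_4$ (and the matching radii $\tfrac14\eta s,\ \tfrac12\eta s,\ \eta s$) so that every Cauchy estimate in $x$ and in $(y,z,\bar z)$ has a genuine gap to consume, and one must track which norm — the plain $|\cdot|^{a_+,\bar p}$ or the Lipschitz $|\cdot|^{\mathcal L_{a_+,\bar p}}$ — is needed on each factor so that the product estimate closes without circularity. The Lipschitz-of-a-composition term $(DY)\cdot\triangle_{\xi\varepsilon}\phi_F^t$ is the delicate one: it forces the appearance of the plain norm of $Y$ (not its Lipschitz norm) multiplied by the Lipschitz norm of the flow, and hence, via Lemma 2.4, by $|X_F|^{\mathcal L_{a_+,\bar p}}_{s,\hat D}$, together with the singular prefactor $\frac{1}{(r-r_+)\eta^2}$. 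Everything else follows the standard pattern, and detailed computations of this type can be found in \cite{geng} and \cite{posel1}.
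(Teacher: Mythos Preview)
Your proposal is correct and follows exactly the standard argument from P\"oschel \cite{posel1}, Section~3, which is precisely what the paper does: it gives no proof of its own for this lemma but simply writes ``See section 3 of \cite{posel1} for the detail proof.'' Your sketch---pull-back formula $(\phi_F^t)^*Y=(D\phi_F^t)^{-1}(Y\circ\phi_F^t)$, domain inclusion from Lemma~2.4, Neumann series for $(D\phi_F^t)^{-1}$, and for the Lipschitz part the decomposition of $\triangle_{\xi\varepsilon}((\phi_F^t)^*Y)$ with a Cauchy estimate producing the $\frac{1}{(r-r_+)\eta^2}$ factor---is exactly the content of that reference, so there is nothing to compare.
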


 See {\rm section 3} of \rm\cite{posel1} for the detail proof.

In the following, we estimate the commutator $[X_{R_{t}},X_{F}]$ on
the domain:
$$
|[X_{R_{t}},X_{F}]|_{s}\leq |D X_{R}\cdot X_{F}|_{s}+|D X_{F}\cdot
X_{R}|_{s}.
$$
Using the generalized Cauchy estimate, {\rm lemma 3.1} and {\rm
lemma 3.2}, we obtain
\begin{eqnarray}
|[X_{R_{t}},X_{F}]|_{s,\tilde{D}}&\le&
(r-r_{+})^{-1}|X_P|_{s,D}|X_F|_{r,\hat{D}}\\
&\le& c (r-r_+)^{-1}\gamma\mu^{2}\Gamma(r-r_+)C(a-a_+).
\end{eqnarray}
Similarly,
\begin{eqnarray}
|[X_{R_{t}},X_{F}]|^{\mathcal {L}}_{s,\tilde{D}}&\leq& |D
X_{R}|^{\mathcal {L}}_{s,\tilde{D}}|X_{F}|_{s,\tilde{D}}+|D
X_{R}|_{s,\tilde{D}}|X_{F}|^{\mathcal {L}}_{s,\tilde{D}}\nonumber\\
&~&+|D X_{F}|^{\mathcal {L}}_{s,\tilde{D}}| X_{R}|_{s,\tilde{D}}+|D
X_{F}|_{s,\tilde{D}}|X_{R}|^{\mathcal {L}}_{s,\tilde{D}}\nonumber\\
&\leq& (r-r_+)^{-1}(|X_P|^{\mathcal
{L}}_{s,D}|X_F|_{s,\hat{D}}+|X_P|_{s,D}|X_F|^{\mathcal
{L}}_{s,D})\nonumber\\
&\leq& c(r-r_+)^{-1}M\mu^{2}\Gamma(r-r_+)C(a-a_+).\nonumber
\end{eqnarray}
Finally, we have $|Y|^{*}_{\eta s}\leq c\eta^{-2}|Y|^{*}_{s}$ for
any vector field $Y$, where $|\cdot|^{*}$ stands for either
$|\cdot|_{s}$ or $|\cdot|^{\mathcal {L}}_{s}$.

Combining the above, there exists a constant $c_{6}$ such that
\begin{eqnarray}
|(\phi^{t}_{F})^{*}[X_{R_{t}},X_{F}]|_{\frac{1}{4}\eta
s,D_{2}}&\leq& c|[X_{R_{t}},X_{F}]|_{\eta s,D_{4}}\leq
\eta^{-2}|[X_{R_{t}},X_{F}]|_{s,\tilde{D}} \nonumber\\
&\leq&
c_{6}(r-r_{+})^{-1}\eta^{-2}\gamma\mu^{2}\Gamma(r-r_+)C(a-a_+),\nonumber
\end{eqnarray}
and
\begin{eqnarray}
&~&|(\phi_{F}^{t})^{*}[X_{R_{t}},X_{F}]|^{\mathcal
{L}}_{\frac{1}{4}\eta
s,D_{2}}\nonumber\\&\leq&~c|[X_{R_{t}},X_{F}]|^{\mathcal {L}}_{\eta
s,D_{4}}+\frac{c}{(r-r_+)\eta^2}|[X_{R_{t}},X_{F}]|_{\eta
s,D_{4}}|X_F|^{{\mathcal
{L}}}_{s,\hat{D}}\nonumber\\
&\leq&~c\eta^{-2}|[X_{R_{t}},X_{F}]|^{\mathcal
{L}}_{s,\tilde{D}}+\frac{c}{(r-r_+)\eta^4}|[X_{R_{t}},X_{F}]|_{s,\tilde{D}}
|X_F|^{\mathcal {L}}_{s,\hat{D}}\nonumber\\
&\leq&~c_{6}(r-r_+)^{-1}\eta^{-2}M\mu^{2}\Gamma(r-r_+)C(a-a_+)\nonumber\\
&~&+c_{6}(r-r_+)^{-2}\eta^{-4}M\mu^{3}\Gamma^{2}(r-r_+)C^2(a-a_+).\nonumber
\end{eqnarray}
\begin{lemma}~Assume that
\begin{itemize}
\item[{\rm \bf
H8)}]$c_{6}
(\gamma\mu^{2}+(r-r_{+})^{-1}\eta^{-2}\gamma\mu^{2}\Gamma(r-r_+))C(a-a_+)\leq
\gamma_+\mu_+,$
\item[{\rm \bf
H9)}]$c_6(r-r_+)^{-1}\eta^{-2}M\mu^{2}\Gamma(r-r_+)C(a-a_+)+\\c_6
(r-r_+)^{-2}\eta^{-4}M\mu^{3}\Gamma^{2}(r-r_+)C^{2}(a-a_+)+c_{6}M\mu^{2}\leq
M_+\mu_+$.
\end{itemize}
Then we have
$$
|X_{P_+}|^{a_+,\bar{p}}_{s_+,D_+}\leq
\gamma_+\mu_+,~~~|X_{P_+}|^{{\mathcal
{L}}_{a_+,\bar{p}}}_{s_+,D_+}\leq M_+\mu_+.
$$
\end{lemma}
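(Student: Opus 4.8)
The plan is to assemble the estimates already obtained in this subsection and to check, term by term, that the new perturbed vector field $X_{P_+}$ is bounded by exactly the quantities occurring on the left of \textbf{H8)} and \textbf{H9)}. Recall the decomposition derived above from $\Phi_+=\phi^1_F$:
$$X_{P_+}=X_{(P-R)\circ\phi^1_F}+\int_0^1(\phi^t_F)^*[X_{R_t},X_F]\,dt,\qquad R_t=(1-t)[R]+tR,$$
so it is enough to bound the two summands on $D_+=D_{\frac a4,p;1}$ in the norm $|\cdot|^{a_+,\bar p}_{s_+,\cdot}$ and in its Lipschitz counterpart $|\cdot|^{{\mathcal {L}}_{a_+,\bar p}}_{s_+,\cdot}$, and then add.

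For the truncation-remainder term I would simply quote (\ref{p+1}) and (\ref{p+1l}): by Lemma 2.1 the field $X_{P-R}$ has size $c_1\gamma\mu^2$ (respectively $c_1M\mu^2$ for the Lipschitz seminorm) on $D_{\frac a4,p;7}$, while Lemma 2.4 shows that $\phi^1_F$ is a near-identity map carrying $D_1$ into a domain contained in $D_4\subset D_7$; hence composing costs only a universal constant, and
$$|X_{(P-R)\circ\phi^1_F}|^{a_+,\bar p}_{s_+,D_1}\le c_6\gamma\mu^2,\qquad |X_{(P-R)\circ\phi^1_F}|^{{\mathcal {L}}_{a_+,\bar p}}_{s_+,D_1}\le c_6M\mu^2.$$
For the integral term the key observation is that its integrand has already been controlled uniformly in $t\in[0,1]$: the generalized Cauchy estimate together with (\ref{xf})--(\ref{xfl}) gives $|[X_{R_t},X_F]|_{s,\tilde D}\le c(r-r_+)^{-1}\gamma\mu^2\Gamma(r-r_+)C(a-a_+)$ (with $M$ in place of $\gamma$ for the Lipschitz seminorm), and then Lemma 2.8 for the pull-back $(\phi^t_F)^*$ together with the rescaling $|Y|^{*}_{\eta s}\le c\eta^{-2}|Y|^{*}_{s}$ yields
$$|(\phi^t_F)^*[X_{R_t},X_F]|_{\frac14\eta s,D_2}\le c_6(r-r_+)^{-1}\eta^{-2}\gamma\mu^2\Gamma(r-r_+)C(a-a_+),$$
together with the corresponding Lipschitz bound, which carries in addition the term $c_6(r-r_+)^{-2}\eta^{-4}M\mu^3\Gamma^2(r-r_+)C^2(a-a_+)$ coming from the Lipschitz dependence of the flow on $\xi$. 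None of these bounds involves $t$, so integrating over $[0,1]$ changes nothing.

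Adding the two contributions, the analytic norm satisfies
$$|X_{P_+}|^{a_+,\bar p}_{s_+,D_+}\le c_6\big(\gamma\mu^2+(r-r_+)^{-1}\eta^{-2}\gamma\mu^2\Gamma(r-r_+)\big)C(a-a_+),$$
which is precisely the quantity bounded by $\gamma_+\mu_+$ in \textbf{H8)}, while the Lipschitz seminorm is bounded by
\begin{multline*}
c_6(r-r_+)^{-1}\eta^{-2}M\mu^2\Gamma(r-r_+)C(a-a_+)\\
+c_6(r-r_+)^{-2}\eta^{-4}M\mu^3\Gamma^2(r-r_+)C^2(a-a_+)+c_6M\mu^2,
\end{multline*}
which is precisely the quantity bounded by $M_+\mu_+$ in \textbf{H9)}. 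Hence $|X_{P_+}|^{a_+,\bar p}_{s_+,D_+}\le\gamma_+\mu_+$ and $|X_{P_+}|^{{\mathcal {L}}_{a_+,\bar p}}_{s_+,D_+}\le M_+\mu_+$, which is the assertion.

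No conceptual obstacle arises; the only thing requiring care is the bookkeeping — keeping the nested domains $D_1\subset D_2\subset\cdots\subset D_8$ straight, accounting for the successive losses of width $\tfrac18(r-r_+)$ in ${\rm Im}\,x$, and tracking the several factors of $\eta^{-2}$ (and $\eta^{-4}$) produced whenever a vector-field norm is transported from radius $s$ down to radius $\eta s$ or $\tfrac14\eta s$. These are exactly the factors that \textbf{H8)} and \textbf{H9)} are engineered to absorb, so this lemma is simply the point at which the stage-$\nu$ smallness is shown to propagate to stage $\nu+1$, thereby closing the iterative scheme.
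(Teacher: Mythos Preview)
Your proposal is correct and follows essentially the same approach as the paper: split $X_{P_+}$ into the truncation-remainder piece and the integral of the pulled-back commutator, quote the already-established bounds (\ref{p+1}), (\ref{p+1l}) and the pull-back/commutator estimates, and then invoke \textbf{H8)} and \textbf{H9)} to absorb the resulting sums. The paper's proof is terser but does exactly this; your additional commentary on the domain bookkeeping and the $\eta^{-2}$ factors is accurate and matches the structure of the argument.
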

\begin{proof}
Note that
\begin{eqnarray}
|X_P|^{a_+,\bar{p}}_{s_+,D_{+}}&\leq&|X_{(P-R)\circ\phi_{F}^{1}}|^{a_+,\bar{p}}_{s_+,D_{+}}
+|\int_{0}^{1}(\phi^{t}_{F})^{*}[X_{R_{t}},X_{F}]dt|\nonumber\\
&\leq&|X_{(P-R)\circ\phi_{F}^{1}}|^{a_+,\bar{p}}_{\frac{1}{2}\eta
s,D_{4}}+
|(\phi^{t}_{F})^{*}[X_{R_{t}},X_{F}]|^{a_+,\bar{p}}_{\frac{1}{4}\eta s,D_{2}}\nonumber\\
&\leq&\gamma_+\mu_+,
\end{eqnarray}
and
\begin{eqnarray}
|X_{P_+}|^{{\mathcal
{L}}_{a_+,\bar{p}}}_{s_+,D_+}&\leq&|X_{(P-R)\circ\phi_{F}^{1}}|^{{\mathcal
{L}}_{a_+,\bar{p}}}_{s_+,D_+}+|(\phi_{F}^{t})^{*}[X_{R_{t}},X_{F}]|^{{\mathcal
{L}}_{a_+,\bar{p}}}_{\frac{1}{4}\eta s,D_{2}}\nonumber\\
 &\leq& M_+\mu_+.
\end{eqnarray}

For Theorem A, the proof is the same as
$C(a-a_+)=\sum_{0<i<I_+}i^{10}{\rm e}^{-i(a-a_+)}$.
\qquad\end{proof}

 In subsection 1.1, for Theorem A', we assume that the perturbation $P$ has
the special form. So to accomplish one KAM step, we need to prove
the new perturbation $P_+\in {\mathcal {A}}$.

\begin{lemma}~ If the function $G_1(y,x,z,\bar{z}),~G_2(y,x,z,\bar{z})\in
{\mathcal {A}}$, then $G_1\pm G_2,~\{G_1,G_2\}\in{\mathcal {A}}$.
\end{lemma}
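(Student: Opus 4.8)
The plan is to verify membership in $\mathcal{A}$ by checking the defining linear condition on the Taylor–Fourier coefficients directly. Recall that $\mathcal{A}$ consists of those $P\in\mathcal{F}_{a,p}^{\bar{a},\bar{p}}$ whose coefficients satisfy $P_{kmq\bar{q}}=0$ whenever the ``momentum'' $\pi(k,q,\bar{q}):=\sum_{j=1}^{n}k_j\imath_j+\sum_{i\in Z_1^\rho}(q_i-\bar{q}_i)i\ne 0$. So the statement is equivalent to saying that $\mathcal{A}$ is the kernel of the linear momentum functional, intersected with $\mathcal{F}_{a,p}^{\bar{a},\bar{p}}$, and that this subspace is closed under addition, subtraction and Poisson bracket. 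The first step is to observe that $\mathcal{F}_{a,p}^{\bar{a},\bar{p}}$ is itself closed under $\pm$ and $\{\cdot,\cdot\}$ (this is the standard algebra property of the space of regular Hamiltonians and may be quoted from the references; it also follows from the norm estimates in Lemma 2.2 and Lemma 2.8 type arguments), so only the momentum-vanishing condition needs attention.

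For the linear part, $G_1\pm G_2$ has coefficients $(G_1)_{kmq\bar{q}}\pm (G_2)_{kmq\bar{q}}$, so if both vanish on $\{\pi\ne 0\}$ then so does the sum or difference; hence $G_1\pm G_2\in\mathcal{A}$. For the Poisson bracket, I would compute the Taylor–Fourier expansion of $\{G_1,G_2\}=\partial_y G_1\cdot\partial_x G_2-\partial_x G_1\cdot\partial_y G_2+\sqrt{-1}(\partial_z G_1\cdot\partial_{\bar z}G_2-\partial_{\bar z}G_1\cdot\partial_z G_2)$ and track how the monomial multidegrees add. The key algebraic fact is that the momentum functional $\pi$ is \emph{additive} under multiplication of monomials and \emph{invariant} under the relevant derivatives: differentiating $y^m z^q\bar{z}^{\bar q}\mathrm{e}^{\sqrt{-1}\langle k,x\rangle}$ in $x_j$ brings down a factor $\sqrt{-1}k_j$ without changing $(k,q,\bar q)$, hence without changing $\pi$; differentiating in $y_j$ lowers $m_j$ by one and leaves $(k,q,\bar q)$ untouched, so again $\pi$ is unchanged; differentiating in $z_i$ lowers $q_i$ by one, which changes $\pi$ by $-i$; differentiating in $\bar z_i$ lowers $\bar q_i$ by one, changing $\pi$ by $+i$. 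Therefore a product such as $\partial_{z_i}G_1\cdot\partial_{\bar z_i}G_2$ has, for its typical monomial, momentum equal to $\pi(\text{monomial of }G_1)-i + \pi(\text{monomial of }G_2)+i$, i.e. the \emph{sum} of the two original momenta. Since every nonzero monomial of $G_1$ and of $G_2$ has momentum $0$ (because $G_1,G_2\in\mathcal{A}$), every nonzero monomial appearing in any of the four product terms of $\{G_1,G_2\}$ has momentum $0+0=0$. Collecting terms, every nonzero Taylor–Fourier coefficient of $\{G_1,G_2\}$ sits at a multi-index with $\pi=0$, which is exactly the condition for $\{G_1,G_2\}\in\mathcal{A}$.

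The only genuinely non-formal point — and the one I would write out with a little care — is the bookkeeping in the Poisson bracket: one must check that the $\partial_x\cdot\partial_y$ terms also preserve momentum (they do, trivially, by the two observations above, since $\partial_x$ and $\partial_y$ each leave $(k,q,\bar q)$ fixed) and that no cancellation or re-indexing in the sum $\sum_i$ can create a surviving term off the momentum-zero locus. Since each summand individually is supported on $\{\pi=0\}$ and the union of finitely many (in each fixed degree) such supports is again contained in $\{\pi=0\}$, no such leakage occurs. I do not expect any analytic obstacle: convergence and the regularity $\mathcal{F}_{a,p}^{\bar a,\bar p}$-membership of $\{G_1,G_2\}$ are handled exactly as in the commutator estimates already used above (Lemma 2.8 and the generalized Cauchy estimates), and the bracket of two functions analytic on $D_{a,p}(r,s)$ is again analytic on a slightly smaller domain. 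Thus the lemma reduces to the elementary additivity of the momentum functional, and the proof is short.
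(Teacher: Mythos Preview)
Your proposal is correct and is precisely the standard momentum-conservation argument; the paper does not actually carry out a proof but simply cites \cite{geng}, where exactly this additivity computation for the momentum functional under the Poisson bracket is performed. So you have supplied the details the paper defers to the reference, with the same underlying idea.
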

\begin{proof}See \cite{geng} for details.\qquad\end{proof}

Note that
\begin{eqnarray*}
P_+&=&P-R+\frac{1}{2!}\{\{N,F\},F\}+\frac{1}{2!}\{\{P,F\}F\}+\cdots,\\
\{N,F\}&=&P_{0000}+\langle
P_{0100},y\rangle+\sum_{i}P^{011}_{i}z_i\bar{z}_i.
\end{eqnarray*}
It is easy to see $P,~R,~F,~\{N,F\}\in{\mathcal {A}}$, according to
the lemma above, we know $P_+\in{\mathcal {A}}$.

\section{Proof of Main Theorem}
\subsection{Iteration Lemma}
In this subsection, we will summarize {\rm section 3} and choose
iteration sequences.

 Set
\begin{eqnarray}
r_\nu&=&r_0(1-\displaystyle\sum_{i=1}^\nu\displaystyle\frac{1}{2^{i+1
}}),\nonumber\\
a_\nu&=&a_0(1-\displaystyle\sum_{i=1}^\nu\displaystyle\frac{1}{2^{i+1
}}),\nonumber\\
\gamma_\nu&=&\gamma_0(1-\displaystyle\sum_{i=1}^\nu\displaystyle\frac{1}{2^{i+1
}}),\nonumber\\
M_{\nu}&=&M_0(2-2^{\nu+1}),\nonumber\\
s_{\nu}&=&\eta_{\nu-1}
s_{\nu-1},~~\eta_{\nu}=\mu_{\nu}^{\frac{1}{3}},~~
\mu_{\nu}=s_{\nu}^{\frac{1}{2}}=\mu_{\nu-1}^{\frac{7}{6}},\nonumber\\
K_{\nu}&=&([\log\frac{1}{\mu_{\nu-1}}]+1)^{3\alpha_1},~~
I_{\nu}=([\log\frac{1}{\mu_{\nu-1}}]+1)^{3\alpha_2},\nonumber\\
D^{\nu-1}_{\frac a4,p;i}&=&D_{\frac
a4,p}(r_{\nu}+\frac{i-1}{8}(r_{\nu-1}-r_{\nu}),i\eta_{\nu-1}
s_{\nu-1}),~~
i=1,2,\cdots,8,\nonumber\\
\hat D^{\nu}_{\frac a4,p}(\beta)&=&D_{\frac a4,p}(r_{\nu}+\frac{7}{8}(r_{\nu-1}-r_{\nu}),\beta),~~~\beta>0,\nonumber\\
\tilde{D}^{\nu}_{\frac a4,p}(\beta)&=&D_{\frac
a4,p}(r_{\nu}+\frac{3}{4}(r_{\nu-1}-r_\nu),\frac{\beta}{2}),~~~\beta>0,\nonumber
\end{eqnarray}
for all $\nu=1,2,\cdots$.

\begin{lemma}~
 If $\mu_0=\mu_0(a,p,\bar{p},r,s,m,\tau)$, or equivalently,
$\mu_*=\mu_*(a,p,\bar{p},r,s,m,\tau)$, is sufficiently small, then
the KAM step described in {\rm Section~3} is valid for all
$\nu=0,1,\cdots$. Consider the  sequences:
 $$
 \Lambda_{\nu},~ H_{\nu},~
  N_{\nu},~ e_{\nu},~\omega_{\nu},
 ~M_{\nu},~P_{\nu},~\Phi_{\nu},
 $$
$\nu=1,2,\cdots$. The following properties hold:
\begin{itemize}
\item[{\rm 1)}] $\Phi_{\nu}:D_{\frac a4,p,3}\times \Pi_{\nu+1}\longrightarrow
D_{\frac a4,p}\times\Pi_{\nu}$ is symplectic, and is of class
$C^{2}$ smooth and depends Lipshcitz-continuously on parameter,
\begin{eqnarray}
|D^{\imath}(\Phi_{\nu+1}-{\rm id})|^{a_{\nu+1},p}_{D_{\frac
a4,p;3}\times\Pi_{\nu+1}}
 \le \frac{\mu_{*}^{\frac14}}{2^{\nu}},\nonumber
 \end{eqnarray}
 where $\imath=0,1$~$\mu_{*}=\mu_{0}^{1-\sigma},~\sigma\in[\frac{3}{4},1)$.
\item[{\rm 2)}] On ${\hat D}_{\frac a4,p}\times \Pi_{\nu+1}$,
$$
H_{\nu+1}=H_{\nu}\circ\Phi_{\nu+1}=N_{\nu+1}+P_{\nu+1},
$$
where
\begin{eqnarray}
 &&|\omega_{\nu+1}
-\omega_{\nu}|_{\Pi_{\nu+1}}\le
\gamma_0\frac{\mu_{*}}{2^{\nu+1}}, \nonumber\\
 &&|\omega_{\nu+1}-\omega_{0}|_{\Pi_{\nu+1}}
 \le \gamma_0\mu_{*},\nonumber\\
&&|\Omega^{\nu+1} -\Omega^{\nu}|_{-\delta,\Pi_{\nu+1}}\le
\gamma_0\frac{\mu_{*}}{2^{\nu+1}}, \nonumber\\
 &&|\Omega^{\nu+1} -\Omega^{0}|_{-\delta,\Pi_{\nu+1}}
 \le \gamma_0\mu_{*},\nonumber\\
 &&|\omega_{\nu+1}
-\omega_{\nu}|^{\mathcal {L}}_{\Pi_{\nu+1}}\le
M_0\frac{\mu_{*}}{2^{\nu+1}}, \nonumber\\
&&|\omega_{\nu+1} -\omega_{\nu}|^{\mathcal {L}}_{\Pi_{\nu+1}} \le
M_0\mu_{*},\nonumber\\
 &&|\Omega^{\nu+1}
-\Omega^{\nu}|^{{\mathcal {L}}}_{-\delta,\Pi_{\nu+1}}\le
M_0\frac{\mu_{*}}{2^{\nu+1}},\nonumber\\
&&|\Omega^{\nu+1} -\Omega^{0}|^{{\mathcal
{L}}}_{-\delta,\Pi_{\nu+1}}
 \le M_0 \mu_{*},\nonumber\\
 &&|\omega_{\nu+1}|^{\mathcal
{L}}_{\Pi_{\nu+1}}+|\Omega^{\nu+1}|_{-\delta,\Pi_{\nu+1}}^{{\mathcal
{L}}}\leq M_{\nu+1},\nonumber\\
 &&|X_{P_{\nu+1}}|_{s_{\nu+1},D^{\nu+1}}^{a_{\nu+1,\bar{p}}}\leq \gamma_{\nu+1}
 \mu_{\nu+1},\nonumber\\
&&|X_{P_{\nu+1}}|^{{\mathcal
{L}}_{a_{\nu+1},\bar{p}}}_{s_{\nu+1},D^{\nu+1}}\leq
M_{\nu+1}\mu_{\nu+1}.\nonumber
\end{eqnarray}
\item[{\rm 3)}]
$$
\Pi_{\nu+1}=\Pi_{\nu}\setminus\bigcup_{K_{\nu}<|k|\leq
K_{\nu+1},\\~0<|l|\leq2}R^{\nu+1}_{kl},
$$
where
$$
\tilde{R}^{\nu+1}=\bigcup_{K_{\nu}<|k|\leq
K_{\nu+1},\\~0<|l|\leq2}R^{\nu+1}_{kl}=\{\xi\in\Pi_{\nu}: |\langle
k,\omega_{\nu}(\xi)\rangle+\langle l,\Omega_{\nu}(\xi)\rangle|\leq
\frac{\gamma_{\nu}}{A_{k,l}}\}.
$$
\end{itemize}
\end{lemma}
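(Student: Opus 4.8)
The plan is to prove the lemma by induction on $\nu$. The inductive hypothesis at level $\nu$ is that $H_\nu=N_\nu+P_\nu$ is real analytic on $D_{\frac a4,p}(r_\nu,s_\nu)\times\Pi_\nu$, with $N_\nu$ again of the form $e_\nu(\xi)+\langle\omega_\nu(\xi),y\rangle+\sum_{i\in Z_{1}^{\rho}}\Omega^\nu_i(\xi)z_i\bar z_i$, that $\omega_\nu,\Omega^\nu$ are Lipschitz with $|\omega_\nu|^{\mathcal{L}}_{\Pi_\nu}+|\Omega^\nu|^{\mathcal{L}}_{-\delta,\Pi_\nu}\le M_\nu$ and obey the ``$-\omega_0$'' and ``$-\Omega^0$'' bounds of item~2), and that $|X_{P_\nu}|^{a_\nu,\bar{p}}_{s_\nu,D^\nu}\le\gamma_\nu\mu_\nu$ and $|X_{P_\nu}|^{\mathcal{L}_{a_\nu,\bar{p}}}_{s_\nu,D^\nu}\le M_\nu\mu_\nu$ (and $P_\nu\in\mathcal{A}$ in the setting of Theorem~A$'$). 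For $\nu=0$ this is precisely the hypothesis of Theorem~A (resp.\ A$'$). The step $\nu\mapsto\nu+1$ then splits into three parts: (i) check that the smallness conditions H1)--H9) of Section~2 hold when every quantity there is read with the index $\nu$ and every ``$+$'' quantity with the index $\nu+1$; (ii) invoke the lemmas of one KAM cycle to produce $\Phi_{\nu+1},N_{\nu+1},P_{\nu+1},\Pi_{\nu+1}$; (iii) repackage the one-cycle estimates into the cumulative bounds 1)--3) by summing geometric series.

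The computation I would do first is to record how the auxiliary sequences grow in $\nu$. From $r_\nu-r_{\nu+1}=r_0\,2^{-\nu-2}$ and $a_\nu-a_{\nu+1}=a_0\,2^{-\nu-2}$ one finds
\[
\Gamma(r_\nu-r_{\nu+1})\le c\,2^{N_1\nu},\qquad C(a_\nu-a_{\nu+1})\le c\,2^{N_2\nu},
\]
with $N_1,N_2\ge0$ depending only on $n,\tau,\rho$, so both are bounded by a fixed power of $2^\nu$; on the other hand $\mu_\nu=\mu_0^{(7/6)^\nu}$ decays doubly exponentially, $\log(1/\mu_\nu)=(7/6)^\nu\log(1/\mu_0)$, and hence $K_\nu,I_\nu$ are bounded by fixed powers of $(7/6)^\nu$ while $\eta_\nu^{-1}=\mu_\nu^{-1/3}$. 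At this point I would fix the exponents $\alpha_1,\alpha_2$, taking them large enough that $(7/6)^{3\alpha_1}>2$ and $(7/6)^{3\alpha_2}>2$ (for instance $\alpha_1=\alpha_2=2$); this forces $K_{\nu+1}(r_\nu-r_{\nu+1})$ and $I_{\nu+1}(a_\nu-a_{\nu+1})$ to outgrow $\log(1/\mu_{\nu+1})$, which is exactly what H1)--H2) require. With these rates in hand, dividing the left-hand side of each of H3)--H9) by its right-hand side reduces the condition to an inequality of the form $c\,\mu_\nu^{\epsilon}\,2^{N_1\nu}(7/6)^{N_2\nu}\le1$ with fixed $\epsilon>0$ and $N_1,N_2\ge0$; since $\mu_\nu^{\epsilon}$ decays doubly exponentially, all of H1)--H9) hold simultaneously for every $\nu$ once $\mu_0$ (equivalently $\mu_*$) is small, and this is where $\mu_0=\mu_0(a,p,\bar{p},r,s,n,\tau)$ gets pinned down.

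Granting H1)--H9) at level $\nu$, the conclusions come out of the KAM cycle almost immediately. The linear-equation lemma produces $F$ with $|X_F|^{a_{\nu+1},\bar{p}}\le c_2\mu_\nu\Gamma(r_\nu-r_{\nu+1})C(a_\nu-a_{\nu+1})$ and the matching Lipschitz bound carrying the factor $M_0/\gamma_0$; the coordinate-transformation lemma then gives $\Phi_{\nu+1}=\phi_F^1$ with $|D^\imath(\Phi_{\nu+1}-\mathrm{id})|^{a_{\nu+1},\bar{p}}\le c_3\mu_\nu\Gamma(r_\nu-r_{\nu+1})C(a_\nu-a_{\nu+1})$ for $\imath=0,1$, which by the growth bounds above is $\le\mu_*^{1/4}2^{-\nu}$ for $\mu_0$ small, and since $|\cdot|^{a_{\nu+1},p}\le|\cdot|^{a_{\nu+1},\bar{p}}$ this is item~1). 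The new-normal-form lemma gives $|\omega_{\nu+1}-\omega_\nu|\le c_4\gamma_\nu\mu_\nu$, $|\omega_{\nu+1}-\omega_\nu|^{\mathcal{L}}\le c_4M_\nu\mu_\nu$ and the analogous bounds for $\Omega^{\nu+1}-\Omega^\nu$ in the $-\delta$-weighted norm; these are $\le\gamma_0\mu_*2^{-\nu-1}$ and $\le M_0\mu_*2^{-\nu-1}$ for $\mu_0$ small, and summing with $\sum_{j\ge0}2^{-j-1}=1$ produces the ``$-\omega_0$'' and ``$-\Omega^0$'' bounds; $|\omega_{\nu+1}|^{\mathcal{L}}+|\Omega^{\nu+1}|^{\mathcal{L}}_{-\delta}\le M_{\nu+1}$ is the Section~2 lemma under H6), and the two estimates on $X_{P_{\nu+1}}$ are exactly the conclusion of the new-perturbation lemma under H8)--H9) (with $P_{\nu+1}\in\mathcal{A}$ added, in the Theorem~A$'$ case, from the $\mathcal{A}$-stability lemma applied to the explicit expansion of $P_{\nu+1}$). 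Finally, $\Pi_{\nu+1}$ is by construction the set $\Pi_+$ of the new-frequency lemma built from the step-$\nu$ data, and that lemma (via H7)) shows that the Diophantine inequalities with $|k|\le K_\nu$ persist under $\omega_\nu\to\omega_{\nu+1}$ and $\Omega^\nu\to\Omega^{\nu+1}$, so the only newly excised zones are those with $K_\nu<|k|\le K_{\nu+1}$, which is the displayed formula for $\tilde R^{\nu+1}$, and item~3) follows.

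I expect the one genuinely delicate point to be part (i), the \emph{uniform-in-$\nu$} verification of H1)--H9). In contrast to the classical case, truncating the normal modes $z,\bar z$ drags the extra factor $C(a_\nu-a_{\nu+1})$ and the extra index $I_\nu$ through every estimate, and one must confirm that its polynomial growth in $2^\nu$, together with the growth of $K_\nu,I_\nu$ in $(7/6)^\nu$ and of the modified Diophantine denominators $|i|^{c(\rho)}$, is still overwhelmed by the doubly exponential decay of $\mu_\nu$; this is precisely the reason for the super-geometric rate $\mu_\nu=\mu_{\nu-1}^{7/6}$ and the logarithmic truncation orders $K_\nu,I_\nu$. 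Once part (i) is secured, everything else is the routine telescoping sketched above.
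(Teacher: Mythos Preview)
Your plan is correct and follows essentially the same route as the paper: first verify H1)--H9) uniformly in $\nu$ by playing the polynomial growth of $\Gamma(r_\nu-r_{\nu+1})C(a_\nu-a_{\nu+1})$, $K_\nu$, $I_\nu$ (all powers of $2^\nu$ or $(7/6)^\nu$) against the super-exponential decay $\mu_\nu=\mu_0^{(7/6)^\nu}$, then feed the result into the Section~2 lemmas and telescope. One small numerical caveat: the illustrative choice $\alpha_1=\alpha_2=2$ is too optimistic; the paper's own computation needs $(7/6)^{3\alpha_1-1}\ge 8$ (so $\alpha_1\gtrsim 5$) in order that $K_{\nu+1}(r_\nu-r_{\nu+1})/16$ beats $\log(1/\mu_\nu)$ with enough margin to absorb the polynomial prefactor in H1), and similarly for $\alpha_2$ in H2)---but this is a harmless adjustment of constants, not a gap in the argument.
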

\begin{proof} We first verify H1)-H9) for all $\nu=0,1\cdots$ to guarantee
the KAM cycle in {\rm section 3}. Generally, we choose $\mu_{0}$
sufficiently small so that H1)-H9) hold for $\nu=1$ and let
$r_{0},~\gamma_{0}=1$ for simplicity. Note that
\begin{equation}
\mu_{\nu}=\mu_{0}^{(7/6)^\nu},~~s_{\nu}=s_{0}^{(7/6)^\nu},
~~r_{\nu}-r_{\nu+1}=\frac{1}{2^{\nu+2}},~~a_{\nu}-a_{\nu+1}=\frac{1}{2^{\nu+2}}.\label{mu,s,r}
\end{equation}
Let $c_0=\max\{c_1,\cdots,c_6\}$.

It follows from(\ref{mu,s,r}) that
\begin{eqnarray*}
&~&\log(n+4)!+(\nu+6)(n+4)\log2+3\alpha_1
n\log([\log\frac{1}{\mu_{\nu}}]+1)\\
&~&-\frac{K_{\nu+1}}{2^{\nu+2}}-\log\mu_{\nu}\nonumber\\
&\leq &\log(n+4)!+(\nu+6)(n+4)\log2+3\alpha_1 n
\log(\log\frac{1}{\mu_{\nu}}+2)\nonumber\\
&~&(7/6)^{\nu}\log\frac{1}{\mu_0}-\frac{(7/6)^{3\alpha_1\nu}}{2^{\nu+2}}\log\frac{1}{\mu_0}\\
&\leq& 0.\nonumber
\end{eqnarray*}
Choose $\mu_{0}$ and $\alpha_1$ such that $(7/6)^{3\alpha_1-1}\geq
8$, so
\begin{eqnarray*}
\displaystyle\int_{K_{\nu+1}}^\infty t^{n+3} {\rm
e}^{-\frac{t}{2^{\nu+3}}}{\rm d}t\leq
(n+4)!2^{(\nu+6)(n+3)}K_{\nu+1}^n{\rm
e}^{-\frac{K_{\nu+1}}{2^{\nu+2}}}\leq \mu_{\nu},
\end{eqnarray*}
i.e.,  H1) holds.  Similarly, choose $\alpha_2$ such that
\begin{eqnarray*}
\displaystyle\int_{I_{\nu+1}}^\infty t^{\rho+4} {\rm
e}^{-\frac{t}{2^{\nu+3}}}{\rm d}t\leq
(\rho+5)!2^{(\rho+4)(\nu+2)}I_{\nu+1}{\rm
e}^{-\frac{K_{\nu+1}}{2^{\nu+2}}}\leq \mu_{\nu},
\end{eqnarray*}
so H2) holds.

Then we consider H7),
\begin{eqnarray}
&~&c_{0}\mu_{\nu}(K_{\nu+1}+K_{\nu+1}^{\tau+1})I_{\nu+1}^{c(\rho)}\\
&\le& 2c_{0}\mu_{\nu}(\log\frac{1}{\mu_\nu}+2)^{3\alpha_1(\tau+1)}
(\log\frac{1}{\mu_\nu}+2)^{3\alpha_2c(\rho)}\nonumber\\
&\leq&
2c_{0}(2\log\frac{1}{\mu_0})^{3\alpha_1(\tau+1)+3\alpha_2c(\rho)}
\mu_{0}^{(\frac{7}{6})^{\nu}}(7/6)^{3\alpha_1(\tau+1)+3\alpha_2c(\rho)\nu}\nonumber\\
&\leq&2c_{0}(2\log\frac{1}{\mu_0})^{3\alpha_1(\tau+1)+3\alpha_2c(\rho)}
\mu_{0}(\mu_{0}^{\frac{1}{6}}(7/6)^{3\alpha_1(\tau+1)+3\alpha_2c(\rho)})^{\nu},\nonumber
\end{eqnarray}
since $\mu_{0}^{(\frac{7}{6})^{\nu}}\leq\mu_{0}^{1+\frac{\nu}{6}}$.
Choose $\mu_0$ such that
$$
\mu_{0}^{\frac{1}{6}}(7/6)^{3\alpha_1(\tau+1)+3\alpha_2c(\rho)}\leq
\frac{1}{3}.
$$
So H7) holds. Also
$$
4c_{0}\mu_{0}^{1+\frac{\nu}{6}}<\frac{1}{2^{\nu+2}}
$$
implies H6).

One can find that, in the proof above, choosing
$\rho=1,~c(\rho)=\frac52$, H1),~H2),~H6),~H7)(or H7)') also hold.

 Note that
\begin{eqnarray}
\Gamma_{\nu}&=&\Gamma(r_\nu-r_{\nu+1})\le
\displaystyle\sum_{0<|k|\leq
K_{\nu+1}}|k|^{4\tau+4}{\rm e}^{-|k|\frac{1}{2^{\nu+6}}}\nonumber\\
&\leq& \int_{1}^{\infty}\lambda^{4\tau+n+4}
{\rm e}^{-\frac{\lambda}{2^{\nu+6}}}{\rm d}\lambda\nonumber\\
&\leq&(4[\tau]+n+4)!2^{(\nu+6)(4\tau+n+4)},\nonumber
\end{eqnarray}
and
\begin{eqnarray*}
C_{\nu}&=&C(a_{\nu}-a_{\nu+1})=\sum_{0<|i|\le
I_{\nu+1}}|i|^{c(\rho)+10}{\rm
e}^{-(a_{\nu}-a_{\nu+1})|i|}\\
&\le&\int_{1}^{\infty}\lambda^{c(\rho)+10}{\rm
e}^{\frac{\lambda}{2^{\nu+2}}}{\rm d}\lambda\\
&\le&(c(\rho+10))!2^{(\nu+2)(c(\rho)+10)}.
\end{eqnarray*}

Let
\begin{eqnarray*}
a^{*}&=&(4[\tau]+n+4)!(c(\rho)+10)!2^{6(4\tau+n+6)+2(c(\rho)+10)},\\
b^{*}&=&4\tau+n+c(\rho)+14.
\end{eqnarray*}
Then
\begin{equation}
\Gamma_{\nu}C_{\nu}\le a^{*}2^{b^{*}\nu}.
\end{equation}
It is clear that, H3)-H5)and H8)-H9) are equivalent to
\begin{eqnarray}
32c_{0}a^{*}\mu_\nu2^{(b^*+1)\nu}&\leq&1,\label{h2}\\
\frac{1}{5}c_{0}a^{*}\mu_\nu^{\frac13}2^{b^{*}\nu}&\leq&1.\nonumber
\end{eqnarray}
Observing H8), H9) yields
\begin{eqnarray}\label{H7}
&~&c_0\frac{\gamma_\nu\mu^2_\nu}{\gamma_{\nu+1}\mu_{\nu+1}}+
c_0\frac{\gamma_\nu\mu^2_\nu\Gamma_\nu C_{\nu}}{(r_{\nu}-r_{\nu+1})\eta_{\nu}^{2}\gamma_{\nu+1}\mu_{\nu+1}}\nonumber\\
&\leq&2c_{0}\mu_{\nu}^{\frac56}+8c_{0}2^{\nu}\mu_{\nu}^{\frac16}\Gamma_{\nu}C_{\nu}\\
&\leq&~1,\nonumber
\end{eqnarray}
and
\begin{eqnarray}\label{H8}
&~&c_0\frac{M_\nu\mu^2_\nu}{M_{\nu+1}\mu_{\nu+1}}+
c_0\frac{M_\nu\mu^2_\nu\Gamma_\nu
C_{\nu}}{(r_{\nu}-r_{\nu+1})\eta_{\nu}^{2} M_{\nu+1}\mu_{\nu+1}}\nonumber\\
&~&+ c_0\frac{M_\nu\mu^3_\nu\Gamma^{2}_\nu
C^2_{\nu}}{(r_{\nu}-r_{\nu+1})^{2}\eta_{\nu}^{4}
M_{\nu+1}\mu_{\nu+1}}\nonumber\\
&\leq&\frac{c_{0}}{2}\mu_0^{\frac56}+4c_{0}2^{\nu}\mu_{\nu}^{\frac16}\Gamma_{\nu}C_{\nu}
+4c_{0}2^{2\nu}\mu_{\nu}^{\frac12}\Gamma^{2}_{\nu}C^2_{\nu}\\
&\leq&~1.\nonumber
\end{eqnarray}
Combining the above, we only prove
\begin{equation}\label{mugamma}
8c_{0}2^{\nu}\mu_{\nu}^{\frac16}\Gamma_{\nu}\leq \frac{1}{3}.
\end{equation} In fact
$$
8c_{0}a^*\mu_{\nu}^{\frac16}2^{(b^{*}+1)\nu}\leq
8c_{0}a^*(\mu_{0}^{\frac{1}{6}})^{(1+(\nu/6))}2^{(b^{*}+1)\nu}
\leq8c_{0}a^*\mu_{0}^{\frac{1}{3}}(\mu_{0}^{\frac{1}{36}}2^{b^{*}+1})^{\nu}.
$$
Choose $\mu_{0}$ sufficiently small such that (\rm\ref{mugamma}) is
verified. And by (\rm\ref{h2})-(\rm\ref{H8}), we know that H3)- H5),
H8), H9) hold.

 From
(\ref{mugamma}), we have
$$
c_0\mu_{\nu}\Gamma_{\nu}\leq \frac{1}{16}\mu_{\nu}^{(5/6)} \leq
\frac{\mu_{*}}{2^{\nu}},
$$
where $\mu_{*}=\mu_{0}^{1-\sigma},~~\sigma\ge \frac{1}{2}$, so
\begin{eqnarray}
c_{0}\mu_{\nu}\Gamma_{\nu}\leq \frac{\mu_{*}}{2^{\nu}},~~~
c_{0}\frac{ M_\nu}{\gamma_{\nu}}\mu_{\nu}\Gamma_{\nu}\leq \frac{
M_\nu}{\gamma_{\nu}}\frac{\gamma^{a}\mu_{*}}{2^{\nu+1}},
\end{eqnarray}
for all $\nu=0,1\cdots$.

We now proceed the iterative schemes. First, it is clear to see the
part 1) and 2) of the lemma hold for $\nu=1$. Then assume that for
some $\nu_*$, 1) and 2) are true for all $\nu=1,\cdots,\nu_*$. Then
by Lemmas in section 2 and arguments above, we claim that the KAM
iteration is valid for $\nu=\nu_*+1$.

For the proof of Theorem A, we choose $c(\rho)=\frac52$, and
\begin{eqnarray*}
C_{\nu}&\le&\int_{1}^{\infty}\lambda^{6}{\rm
e}^{\frac{\lambda}{2^{\nu+2}}}{\rm d}\lambda\le6!2^{6\nu+12},\\
a^{*}&=&(4[\tau]+n+4)!6!2^{6(4\tau+n+2)},\\
b^{*}&=&4\tau+n+12.
\end{eqnarray*}
The other  proof is the same.

 It is easy to see that 3) holds for $\nu=1$. For $\nu>1$, from lemma 2.7 we have
\begin{eqnarray}
\Pi_{\nu}&=&\{ \xi\in \Pi_{\nu}: |\langle
k,\omega_{\nu}(\xi)\rangle+\langle
l,\Omega_{\nu}(\xi)\rangle|>\frac{\gamma_{\nu}}{A_{k,l}},\nonumber\\
&~&~~~\hbox{\rm
  for all}\,\,0<|k|\le K_{\nu},~0<|l|\leq2\}.\nonumber
\end{eqnarray}
Set
\begin{eqnarray}
\tilde{R}_{\nu+1}&=&\{\xi\in\Pi_{\nu}: |\langle
k,\omega_{\nu}(\xi)\rangle+\langle l,\Omega_{\nu}(\xi)\rangle|\leq
\frac{\gamma_{\nu}}{A_{k,l}}, \nonumber\\
&~&~~K_{\nu}\leq|k|\leq K_{\nu+1},~0<|l|\leq2\}.\nonumber
\end{eqnarray}
Then
\begin{eqnarray}
\Pi_{\nu+1}&=&\{ \xi\in \Pi: |\langle
k,\omega_\nu(\xi)\rangle+\langle
l,\Omega_\nu(\xi)\rangle|>\frac{\gamma_{\nu}}{A_{k,l}},\nonumber\\
&~&~~~\hbox{\rm
  for all}\,\,0<|k|\le K_{\nu+1}\,~0<|l|\leq2\}\nonumber\\
&=&\Pi_{\nu}\setminus\tilde{R}_{\nu+1}.\nonumber
\end{eqnarray}
The lemma is now complete.\qquad\end{proof}

\subsection{Convergence}
In this section,  we prove the convergence of the sequences from
{\rm subsection~3.1}. Let
\begin{eqnarray}
& & \Psi^\nu=   \Phi_0\circ\Phi_1\circ\cdots\circ\Phi_\nu:
D_{\frac a4,p}\times\Pi_{\nu+1}\rightarrow  D_0,\nonumber\\
& & H\circ\Psi^\nu=H_{\nu}=N_{\nu}+P_{\nu},\nonumber
\end{eqnarray}
for $\nu=0,1,\cdots$, which satisfy all properties described in
Lemma~3.1. By the iteration lemma, we claim that
 $\Psi^\nu$ is convergent to a function $\Psi^\infty\in C^{1}(D(\frac
{r_0}2,0)\times \Pi_{*})$, in $C^{2}(D(\frac {r_0}2,0)\times
\Pi_{*})$, where
$$
\Pi_{*}=\cap_{\nu=0}^\infty \Pi_\nu.
$$
Then $\Pi_{*}$ is a Cantor set, and $\{\Psi_{\xi}:\xi\in \Pi_{*}\}$
is a Lipschitz continuous family of real analytic, symplectic
transformations on $D(\frac {r_0}2,0)$.

By part 2) of lemma~3.1, it is easy to see that $\omega_{\nu}$,
$\Omega_{\nu}$ converge uniformly on $\Pi^{*}$. We denote
$\omega_*$, $\Omega^*$ as their limits, respectively. Then, we have
 \begin{eqnarray}
|\omega_*-\omega_0|_{\Pi_*}&=&O(\gamma_0\mu_*),\nonumber\\
 |\Omega^*-\Omega^0|_{-\delta,\Pi_*}&=&O(\gamma_0\mu_*),\nonumber\\
 |\omega_*-\omega_0|_{\Pi_*}^{\mathcal {L}}&=&O(M_0\mu_*),\nonumber\\
 |\Omega^*- \Omega^0|_{-\delta,\Pi_*}^{{\mathcal {L}}}
 &=&O(M_0\mu_*).\nonumber
\end{eqnarray}
Thus, on $G_*$, $|N_\nu-N_{\infty}|$ converges uniformly  to 0,
and
$$
P_\nu=H\circ\Psi^\nu-N_\nu
$$
converges uniformly to
$$
P_\infty=H\circ\Psi^\infty-N_\infty.
$$
 Clearly, these limits above are  uniformly Lipshchitz continuous
in $\xi\in \Pi^{*}$ and real analytic in $(x,y,z,\bar{z})\in
D_{\frac{a}{2},p}(\frac {r_0}2,\frac {s _0}2)$.

Note that
\begin{eqnarray}
|X_{P_\nu}|^{a_{\nu}, \bar{p}}& \leq&  \gamma_\nu
\mu_\nu,\nonumber\\
 |X_{P_\nu}|^{{\mathcal {L}}_{a_{\nu},\bar{p}}}&\leq& M_{\nu}\mu_{\nu}.\nonumber
\end{eqnarray}
It means that for any $\xi\in\Pi_{*}$,~$m\in {\bf
Z}^{n}_+,~q,\bar{q}\in Z_{1}^{\rho}$ with $2|j|+|q+\bar{q}|\leq 2$,
\begin{equation}
|\partial_y^{m} \partial_z^q\partial_{\bar{z}}^{\bar{q}}
P_\nu|_{D_{\nu}}\leq\gamma_\nu \mu_\nu.
\end{equation}
 Since the right hand side of the above
converges to $0$ as $\nu\to \infty$, we have  that
$$
\partial_y^{j} \partial_z^q\partial_{\bar{z}}^{\bar{q}} P_\infty|_{(y,z,\bar{z})=0}= 0
$$
for all $x\in {\bf T}^n$, $\xi\in\Pi_{*}$, $m\in {\bf Z}^n_+$,
$q,~\bar{q}\in Z_1^{\rho}$ with $2|j|+|q+\bar{q}|\le 2$.

\section{Measure Estimates}
It remains to show the measure estimate. We induct a lemma first.
Recall that
\begin{eqnarray}
|\omega_{\nu}|^{\mathcal
{L}}+|\Omega^{\nu}|_{-\delta,\Pi_{\nu}}^{\mathcal
{L}}&\leq& M_{\nu}\leq 2M_{0},\\
|\omega_{\nu}-\omega_{0}|,
~|\Omega^{\nu}-\Omega^{0}|_{-\delta,\Pi_{\nu}}
&\leq&\gamma_{0}~\mu_{*},\\
R^{\nu+1}_{kl}=\{\xi\in\Pi_{\nu}: |\langle
k,\omega_{\nu}(\xi)\rangle+\langle l,\Omega_{\nu}(\xi)\rangle|&\leq&
\frac{\gamma_{\nu}}{A_{k,l}}\}.
\end{eqnarray}

\begin{lemma}~Define \begin{eqnarray}
 R_{kl}(\alpha_{k,ij})&:=&\{\xi\in\Pi_{\nu}: |\langle
k,\omega_{\nu}(\xi)\rangle+\langle l,\Omega_{\nu}(\xi)\rangle|\leq
\alpha_{k,ij}\},\nonumber
\end{eqnarray}
where $k\in {\bf Z}^{n}\setminus \{0\},$~$i,j$ denote the sites of
non-zero components of $l$, and $\alpha_{k,ij}$ is a positive
constant depending on $k,i,j$.

If $|\Omega^{\nu}|_{-\delta,\Pi_{\nu}}^{\mathcal {L}}\leq M$ and
$|k|\ge 16M$, then for fixed $l\in\Lambda$, we have
\begin{equation}\label{5.4}
|R_{kl}(\alpha_{k,ij})|\prec c(n)\frac{\alpha_{k,ij}}{|k|},
\end{equation}
where $\prec$ means $\cdot\leq c \cdot$, $c$ is a positive constant
independent of any parameters, and $c(n)>0$ depends only on $n$.
\end{lemma}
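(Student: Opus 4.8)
The plan is to reduce the $n$-dimensional measure bound to a one-dimensional sublevel–set estimate along the direction $v=k/|k|$, after replacing the parameter $\xi$ by the unperturbed frequency $u=\omega_0(\xi)$. The elementary ingredient is the following: if $f$ is real–valued on an interval $I$ and $|f(x)-f(y)|\ge\beta|x-y|$ for all $x,y\in I$ with some $\beta>0$, then the set $\{x\in I:|f(x)|\le\alpha\}$ has diameter, hence Lebesgue measure, at most $2\alpha/\beta$.

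By A1) the map $\xi\mapsto u=\omega_0(\xi)$ is bi-Lipschitz on $\Pi$; write $L_1=|\omega_0|^{\mathcal L}$, $L_2=|\omega_0^{-1}|^{\mathcal L}$, which are fixed constants of the setup. On $U_\nu:=\omega_0(\Pi_\nu)$ put
\[
F(u)=\langle k,u\rangle+\big\langle k,(\omega_\nu-\omega_0)(\omega_0^{-1}(u))\big\rangle+\big\langle l,\Omega_\nu(\omega_0^{-1}(u))\big\rangle,
\]
so that $\omega_0\big(R_{kl}(\alpha_{k,ij})\big)\subset\{u\in U_\nu:|F(u)|\le\alpha_{k,ij}\}$, because for $u=\omega_0(\xi)$ one has $F(u)=\langle k,\omega_\nu(\xi)\rangle+\langle l,\Omega_\nu(\xi)\rangle$. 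If $u_1,u_2\in U_\nu$ lie on one line parallel to $v$, then $|\langle k,u_1-u_2\rangle|=|k|\,|u_1-u_2|$. By Lemma~3.1 one has $|\omega_\nu-\omega_0|^{\mathcal L}=O(M_0\mu_*)$, so the second term of $F$ is Lipschitz in $u$ with constant $O(M_0\mu_*)\,L_2\,|k|\le\frac14|k|$ once $\mu_*$ is small (the factor $|k|$ cancels out of the requirement); and since $|\Omega_{\nu,j}(\xi)-\Omega_{\nu,j}(\varepsilon)|\le M|j|^{\delta}|\xi-\varepsilon|\le M|\xi-\varepsilon|$ uniformly in $j$ and $|l|\le2$, the third term is Lipschitz in $u$ with constant $\le 2ML_2$. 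Hence, when $|k|$ exceeds a fixed multiple of $ML_2$ — which under the normalization of $\omega_0$ in force is ensured by $|k|\ge16M$ —
\[
|F(u_1)-F(u_2)|\ge\big(\tfrac34|k|-2ML_2\big)|u_1-u_2|\ge\tfrac12|k|\,|u_1-u_2|.
\]

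Decompose $u=sv+u'$ with $u'\perp v$. For each fixed $u'$ the one-dimensional lemma gives that the slice $\{s:|F(sv+u')|\le\alpha_{k,ij}\}$ has length at most $4\alpha_{k,ij}/|k|$, while the set of admissible $u'$ lies in the orthogonal projection of $U_\nu$, whose $(n-1)$-dimensional measure is at most $(L_1\,{\rm diam}\,\Pi)^{n-1}$, a constant. By Fubini, $|\{u\in U_\nu:|F(u)|\le\alpha_{k,ij}\}|\le c(n)\,\alpha_{k,ij}/|k|$; pulling back by the $L_2$-Lipschitz map $\omega_0^{-1}$ costs a further factor $L_2^n$, which is absorbed into $c(n)$, and yields $|R_{kl}(\alpha_{k,ij})|\prec c(n)\,\alpha_{k,ij}/|k|$.

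\paragraph{Main obstacle.} The one step that is not a routine computation is the passage to $u=\omega_0(\xi)$: because $\omega_\nu$ is merely Lipschitz we cannot differentiate $\langle k,\omega_\nu(\xi)\rangle$, and a Lipschitz function may be locally constant, so no usable lower slope exists until the $v$-segments are turned into genuine segments on which $\langle k,u\rangle$ is affine with slope $|k|$. This is precisely where A1) (bi-Lipschitz) and the smallness of $\omega_\nu-\omega_0$ from Lemma~3.1 are used, and it is where the hypothesis $|k|\ge16M$ becomes the sharp threshold that lets $|k|$ dominate the $\Omega_\nu$-contribution; everything else is the standard resonant-zone estimate (cf. \cite{posel1,geng}).
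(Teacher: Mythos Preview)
Your proof is correct and follows essentially the same route as the paper: pass to frequency variables via the bi-Lipschitz map $\omega_0$, obtain a uniform lower Lipschitz bound for $\langle k,\omega_\nu\rangle+\langle l,\Omega_\nu\rangle$ along a one-dimensional direction determined by $k$, and finish with Fubini (the paper cites P\"oschel for this last step). The only cosmetic difference is that the paper slices along $\varpi\in\{-1,1\}^n$ with $k\cdot\varpi=|k|$ (the $\ell^1$ direction) while you slice along $v=k/|k|$; both yield the same $1/|k|$ factor, and your write-up of the Lipschitz bookkeeping is in fact more explicit than the paper's.
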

\begin{proof} Fix $\varpi\in\{-1,1\}^{n}$ such that $|k|=k\cdot\varpi$, and
write $\omega_{\nu}=a\varpi+\rho,~\varpi\perp\rho$. Let $a$ be a
variable. Then for $t>s$,
$$
\langle k,\omega_{\nu}\rangle=\langle
k,\omega_{0}\rangle|^t_s-|\langle k,\omega_\nu-\omega_0\rangle|
\ge|k|(t-s)-\frac 12|k|(t-s)=\frac 12|k|(t-s),
$$
and
$$
|\langle l,
\Omega^{\nu}\rangle|\leq|l|_{\delta}|\Omega^{\nu}|_{-\delta,\Pi_{\nu}}^{\mathcal
{L}}(t-s)\leq 2M|l|_{\delta}(t-s)\leq \frac 14|k|(t-s),
$$
since $|l|_{-\delta}\le 2$. Hence,
$$
\langle k, \omega_{\nu}\rangle+\langle l,\Omega^{\nu} \rangle
\geq\frac 14|k|(t-s).
$$
This can deduce (\rm\ref{5.4}). The details can be seen in lemma 5
of \cite{posel1} or lemma 5.6 of \cite{posel2}. \qquad\end{proof}

\begin{lemma}~ There exists a positive constant $c_{7}$ such that
\begin{eqnarray*}
|\langle l,\Omega^{\nu}\rangle|\ge\left\{\begin{array}{ll}
c_7\langle l\rangle_{d},~~~&l\in\Lambda_+;\\
|c_7\langle
l\rangle_{d}-2\gamma_{0}\mu_*|,&l\in\Lambda_-,\end{array}\right.
\end{eqnarray*}
for all $\xi\in\Pi_\nu$.
\end{lemma}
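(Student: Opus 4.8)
The plan is to detach the frequency drift accumulated along the iteration from the unperturbed spectrum, reducing the claim to a lower bound for $\langle l,\Omega^{0}\rangle$ with $\Omega^{0}=\Omega$, and then to prove that bound by a large--site / bounded--site dichotomy: on large sites the spectral asymptotics A2)$'$ are decisive, while on the remaining finitely many sites compactness of $\Pi$ and nondegeneracy of $\Omega$ do the work. Concretely, by part~2) of Lemma~3.1 one has $|\Omega^{\nu}-\Omega^{0}|_{-\delta,\Pi_{\nu}}\le\gamma_{0}\mu_{*}$, so $|\Omega^{\nu}_{i}(\xi)-\Omega^{0}_{i}(\xi)|\le\gamma_{0}\mu_{*}|i|^{\delta}\le\gamma_{0}\mu_{*}$ for every site $i$ (here $\delta<0$, $|i|\ge1$), and hence
\[
\bigl|\langle l,\Omega^{\nu}\rangle-\langle l,\Omega^{0}\rangle\bigr|\ \le\ |l|\,\gamma_{0}\mu_{*}\ \le\ 2\gamma_{0}\mu_{*}\qquad(0<|l|\le2).
\]
This triangle inequality is exactly what yields the $2\gamma_{0}\mu_{*}$ in the statement; since moreover $\langle l\rangle_{d}\ge1$ for $l\in\Lambda_{+}$, once $\mu_{*}$ is small it suffices to produce a constant $c>0$ with $|\langle l,\Omega^{0}(\xi)\rangle|\ge c\,\langle l\rangle_{d}$ on $\Pi$ (then $c_{7}$ is a suitable fraction of $c$), supplemented, for those $l\in\Lambda_{-}$ where $\langle l\rangle_{d}$ itself degenerates, by an honest lower bound of size $\gtrsim\gamma_{0}\mu_{*}$, which is all that is needed to make $|\langle l,\Omega^{\nu}\rangle|\ge|c_{7}\langle l\rangle_{d}-2\gamma_{0}\mu_{*}|$ go through.

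For $l\in\Lambda_{+}$ we have $\langle l,\Omega^{0}\rangle=\pm\Omega^{0}_{i}$ or $\pm(\Omega^{0}_{i}+\Omega^{0}_{j})$ and $\langle l\rangle_{d}=|i|^{d}$ or $|i|^{d}+|j|^{d}$. From A2)$'$, $\Omega^{0}_{i}(\xi)=|i|^{d}\bigl(1+o(1)\bigr)$ uniformly in $\xi\in\Pi$, so there is $N_{0}$ with $\Omega^{0}_{i}(\xi)\ge\frac12|i|^{d}>0$ once $|i|\ge N_{0}$; this settles the subcase in which every site of $l$ exceeds $N_{0}$ (the frequencies add and $|\langle l,\Omega^{0}\rangle|\ge\frac12\langle l\rangle_{d}$) and, using that $\Omega^{0}_{i}$ is bounded on the compact set $\Pi$ for $|i|<N_{0}$, also the subcase in which one site is so large that it dominates. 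What remains are the $l$ built from sites below a fixed bound --- a finite set; for each such $l$, $\langle l,\Omega^{0}(\cdot)\rangle$ is continuous and, by the nondegeneracy A1) (or, for the concrete equations treated under A1)$'$--A4)$'$, by positivity of their spectra), nonvanishing on $\Pi$, hence bounded below there. Taking $c$ to be the minimum of the resulting finitely many ratios $|\langle l,\Omega^{0}\rangle|/\langle l\rangle_{d}$ and the constants above closes this case.

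For $l\in\Lambda_{-}$ write $l=e_{i}-e_{j}$ with $|j|\le|i|$, so $\langle l\rangle_{d}=|i|^{d}-|j|^{d}$, and by the decomposition $\Omega^{0}=\bar\Omega+\tilde\Omega$ of A2)$'$,
\[
\langle l,\Omega^{0}\rangle=\bigl(\bar\Omega_{i}-\bar\Omega_{j}\bigr)+\bigl(\tilde\Omega_{i}-\tilde\Omega_{j}\bigr)=\bigl(|i|^{d}-|j|^{d}\bigr)+o\!\bigl(|j|^{-\delta}\bigr)+\bigl(\tilde\Omega_{i}-\tilde\Omega_{j}\bigr),
\]
with $|\tilde\Omega_{i}-\tilde\Omega_{j}|\le o(|i|^{\delta})+o(|j|^{\delta})$ bounded and small for $|j|$ large. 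When $i,j$ are well separated, the leading term $|i|^{d}-|j|^{d}$ dominates the two error terms and $|\langle l,\Omega^{0}\rangle|\ge\frac12\langle l\rangle_{d}$; when $i,j$ lie below a fixed bound one argues again by compactness and nondegeneracy. The delicate, and I expect main, obstacle is the intermediate regime of sites of nearly equal but possibly large norm (for $\rho=1$, consecutive integers with $d<1$; for $\rho>1$, in particular $|i|=|j|$ with $i\ne j$): here $\langle l\rangle_{d}$ is tiny, and the lower bound must be squeezed out of the refined relation $\bar\Omega_{i}-\bar\Omega_{j}=|i|^{d}-|j|^{d}+o(|j|^{-\delta})$ in A2)$'$, which was introduced precisely to survive the weaker asymptotics $d>0$. (When $\rho>1$, the hypothesis A4)$'$ eliminates the $z_{i}\bar z_{j}$--terms with $|i|=|j|$, $i\ne j$, so that this sub-case need not be controlled where the lemma is applied in Section~4.)
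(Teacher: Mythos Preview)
Your approach matches the paper's: split off the drift $\Omega^{\nu}-\Omega^{0}$ using Lemma~3.1, reduce to a lower bound $|\langle l,\Omega^{0}\rangle|\ge c\,\langle l\rangle_{d}$ for the unperturbed spectrum, and then apply the (reverse) triangle inequality. The paper's proof is much terser than yours: it simply asserts that A1) and A2) give $|\langle l,\Omega^{0}\rangle|/\langle l\rangle_{d}\to 1$ uniformly in $\xi$, hence $|\langle l,\Omega^{0}\rangle|\ge(1-\varepsilon)\langle l\rangle_{d}$, without spelling out the large-site/bounded-site dichotomy or the compactness argument you sketch; your proposal is essentially filling in details the paper leaves to the reader. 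One small technical difference worth noting: for $l\in\Lambda_{+}$ the paper bounds the drift by $|\langle l,\Omega^{\nu}-\Omega^{0}\rangle|\le|l|_{\delta}\,|\Omega^{\nu}-\Omega^{0}|_{-\delta}\le\langle l\rangle_{d}\,\gamma_{0}\mu_{*}$ (using $|l|_{\delta}\le\langle l\rangle_{d}$ since $\delta<0\le d$) and absorbs it directly into the coefficient of $\langle l\rangle_{d}$, whereas you bound the drift by $2\gamma_{0}\mu_{*}$ and then absorb via $\langle l\rangle_{d}\ge 1$; both routes give the same conclusion.
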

\begin{proof}~ Consider the case $l\in\Lambda$. By A1) and A2) we have
$$
\frac{|\langle l,\Omega^{0}\rangle|}{\langle
l\rangle_{d}}\rightarrow1,
$$
uniformly in $\xi\in\Pi_0$, i.e., there is a small positive
$\varepsilon$,
$$
|\langle l,\Omega^{0}\rangle|\ge (1-\varepsilon)\langle
l\rangle_{d}.
$$
For $l\in\Lambda_+$, since $|l|_{\delta}\leq \langle l\rangle_{d}$,
$$
|\langle l,\Omega^{\nu}-\Omega^{0}\rangle|\le
|l|_{\delta}|\Omega^{\nu}-\Omega^{0}|_{-\delta,\Pi_{\nu}}\le\langle
l\rangle_{d}\gamma_0\mu_*,
$$
and
\begin{eqnarray}
|\langle l,\Omega^{\nu}\rangle|&\ge&|\langle
l,\Omega^{0}\rangle|-|\langle l,\Omega^{\nu}-\Omega^0\rangle|\nonumber\\
&\ge&(1-\varepsilon-\gamma_{0}\mu_*)\langle l\rangle_{d}.\nonumber
\end{eqnarray}
Choosing $\varepsilon$ such that
$1-\varepsilon-\gamma_{0}\mu_*>c_7$, we prove the first part of the
results.

For $l\in\Lambda_-$, $|l|_{\delta}\le 2$, we have
$$
|\langle l,\Omega^{\nu}-\Omega^{0}\rangle|\le
|l|_{\delta}|\Omega^{\nu}-\Omega^{0}|_{-\delta,\Pi_{\nu}}\le2\gamma_0\mu_*,
$$
and
\begin{eqnarray}
|\langle l,\Omega^{\nu}\rangle|&\ge&||\langle
l,\Omega^{0}\rangle|-|\langle l,\Omega^{\nu}-\Omega^0\rangle||\nonumber\\
&\ge&|(1-\epsilon)\langle l\rangle_{d}-2\gamma_0\mu_*|.\nonumber
\end{eqnarray}
\qquad\end{proof}

\begin{lemma} If $R^{\nu+1}_{kl}\ne\emptyset$, and $\gamma_{0}\leq \frac
{c_7}{2}$, then there exits a positive constance $c_8$, such that
\begin{eqnarray*}
\langle l\rangle_{d}\le c_8|k|,
\end{eqnarray*}
for $l\in\Lambda$.
\end{lemma}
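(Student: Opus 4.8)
The plan is to exploit the near-resonance valid on $R^{\nu+1}_{kl}$ together with the lower bound for $|\langle l,\Omega^{\nu}\rangle|$ supplied by Lemma~4.3, controlling the contribution of $\langle k,\omega_{\nu}\rangle$ by a constant multiple of $|k|$ through the uniform boundedness of $\omega_{\nu}$ on $\Pi_{\nu}$.

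First I would pick any $\xi\in R^{\nu+1}_{kl}$, which is nonempty by hypothesis, so that by definition
\[
\bigl|\langle k,\omega_{\nu}(\xi)\rangle+\langle l,\Omega^{\nu}(\xi)\rangle\bigr|\le\frac{\gamma_{\nu}}{A_{k,l}}.
\]
Since $\Pi$ is closed and bounded and $\omega=\omega_{0}$ is Lipschitz on $\Pi$, the number $L_{0}:=\sup_{\Pi}|\omega_{0}|$ is finite; combined with $|\omega_{\nu}-\omega_{0}|_{\Pi_{\nu}}\le\gamma_{0}\mu_{*}$ (recalled at the beginning of this section) this gives $\sup_{\Pi_{\nu}}|\omega_{\nu}|\le L:=L_{0}+\gamma_{0}\mu_{*}<\infty$, and hence $|\langle k,\omega_{\nu}(\xi)\rangle|\le L|k|$. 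The triangle inequality then yields
\[
|\langle l,\Omega^{\nu}(\xi)\rangle|\le L|k|+\frac{\gamma_{\nu}}{A_{k,l}}.
\]

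Next I would split according to the regime of the modified Diophantine condition, using that $\xi\in\Pi_{\nu}$ so Lemma~4.3 applies at $\xi$. If $l\in\Lambda_{+}$, then $\gamma_{\nu}/A_{k,l}=\gamma_{\nu}\langle l\rangle_{d}/(1+|k|^{\tau})\le\gamma_{0}\langle l\rangle_{d}$, while Lemma~4.3 gives $|\langle l,\Omega^{\nu}(\xi)\rangle|\ge c_{7}\langle l\rangle_{d}$; subtracting, $(c_{7}-\gamma_{0})\langle l\rangle_{d}\le L|k|$, and since $\gamma_{0}\le c_{7}/2$ we obtain $\langle l\rangle_{d}\le(2L/c_{7})|k|$. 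If $l\in\Lambda_{-}$, then $A_{k,l}=|i|^{c(\rho)}(1+|k|^{\tau})\ge1$, so $\gamma_{\nu}/A_{k,l}\le\gamma_{0}$, while Lemma~4.3 gives $|\langle l,\Omega^{\nu}(\xi)\rangle|\ge|c_{7}\langle l\rangle_{d}-2\gamma_{0}\mu_{*}|$; hence either $c_{7}\langle l\rangle_{d}\ge2\gamma_{0}\mu_{*}$, in which case $c_{7}\langle l\rangle_{d}\le L|k|+\gamma_{0}+2\gamma_{0}\mu_{*}\le(L+3\gamma_{0})|k|$ because $|k|\ge1$ and $\mu_{*}<1$, or $c_{7}\langle l\rangle_{d}<2\gamma_{0}\mu_{*}$, in which case $\langle l\rangle_{d}<2\gamma_{0}\mu_{*}/c_{7}\le1\le|k|$ since $\gamma_{0}\le c_{7}/2$. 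In every case $\langle l\rangle_{d}\le c_{8}|k|$ with $c_{8}:=\max\{2L/c_{7},\,(L+3\gamma_{0})/c_{7},\,1\}$, and for $l=0$ the claim is trivial.

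The only step that is more than bookkeeping is the case $l\in\Lambda_{+}$: there the right-hand side $\gamma_{\nu}/A_{k,l}$ is not a fixed constant but is itself proportional to $\langle l\rangle_{d}$, so the resulting term $\gamma_{0}\langle l\rangle_{d}$ must be moved to the left-hand side, and it is precisely the hypothesis $\gamma_{0}\le c_{7}/2$ that keeps the coefficient $c_{7}-\gamma_{0}$ bounded away from zero. I do not anticipate any other difficulty.
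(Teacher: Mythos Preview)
Your argument is correct and mirrors the paper's proof: both pick $\xi\in R^{\nu+1}_{kl}$, bound $|\langle k,\omega_{\nu}\rangle|$ by a constant times $|k|$, invoke the lower bound on $|\langle l,\Omega^{\nu}\rangle|$ from the preceding lemma, and split into $\Lambda_{+}$ and $\Lambda_{-}$, absorbing the $\gamma_{0}\langle l\rangle_{d}$ term via $\gamma_{0}\le c_{7}/2$ in the first case and using $|k|\ge1$ in the second. One cosmetic slip: the lower bound you cite is Lemma~4.2, not Lemma~4.3 (the latter is the statement you are proving).
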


\begin{proof}~ If $R^{\nu+1}_{kl}$ is not empty, then for $l\in\Lambda_+$ or
$l\in\Lambda_-$, there exits some $\xi\in\Pi_{\nu}$, such that
$|\langle k,\omega_{\nu} \rangle+\langle
l,\Omega^{\nu}\rangle|<\gamma_0\langle l\rangle_{d}$ or $|\langle
k,\omega_{\nu} \rangle+\langle l,\Omega^{\nu}\rangle|<\gamma_0$,
respectively.

Consider $l\in\Lambda_{+}$,
\begin{eqnarray*}
|k||\omega_{\nu}|&\ge&|\langle
l,\Omega^{\nu}\rangle|-\gamma_{0}\langle l\rangle_{d}\nonumber\\
&\ge& \frac{c_7}{2}\langle l\rangle_{d},\nonumber
\end{eqnarray*}
i.e.,
$$
\langle l\rangle_{d}\le\frac{4}{c_7}|k||\omega_0|_{\Pi},
$$
with $c_8\ge\frac{4}{c_7}|\omega_0|_{\Pi}$.

For $l\in\Lambda_-$, we have
\begin{eqnarray*}
|k||\omega_{\nu}|+\gamma_0&\ge&|\langle l,\Omega^{\nu}\rangle|\ge
|c_7\langle l\rangle_{d}-2\gamma_{0}\mu_*|,
\end{eqnarray*}
that is
\begin{eqnarray*}
\langle l\rangle_{d}&\le& \frac{1}{c_7}
(\gamma_0(2\mu_*+1)+2|k||\omega_0|_{\Pi})\le \frac{2}{c_7}(\gamma_0+|k||\omega_0|_{\Pi}),\nonumber\\
&\le&\frac{2}{c_7}(\gamma_0|k|+|k||\omega_0|_{\Pi}),\nonumber\\
&\le&\frac{2}{c_7}(\gamma_0+|\omega_0|_{\Pi})|k|,\nonumber
\end{eqnarray*}
with $c_8\ge\frac{2}{c_7}(\gamma_0+|\omega_0|_{\Pi})$.
\qquad\end{proof}

Then we decompose $\tilde{R}_{\nu+1}$ into two parts
\begin{eqnarray}
\tilde{R}_{\nu+1}&\equiv&\cup_{K_{\nu}<|k|\leq
K_{\nu+1}}(\cup_{l\in\Lambda_{-}}R_{kl}^{\nu+1}+\cup_{l\in\Lambda_{+}}R_{kl}^{\nu+1})\nonumber\\
&=& ~~~~~~\cup_{K_{\nu}<|k|\leq
K_{\nu+1}}I~~~~~+~~~~~\cup_{K_{\nu}<|k|\leq
K_{\nu+1}}II.~~~~~~~~~\nonumber
\end{eqnarray}

\begin{itemize}
\item Case I:~~$l\in\Lambda_{-}$.
From lemma 4.1, we have
$$
|R_{kl}^{\nu+1}|\prec c(n)
\frac{\gamma_{\nu}}{|k|(1+|k|^{\tau+1})i^{c(\rho)}}\prec c(n)
\frac{\gamma_{0}}{|k|(1+|k|^{\tau+1})i^{c(\rho)}}
$$
for $|k|\ge 16M,~l\in\Lambda_-$.

\begin{lemma}
~ The following holds for $|k|$ sufficiently large,
$$|I|\prec c(n) \frac{\gamma_{0}|k|^{\frac{c_1(\rho)}{d}}}{1+|k|^{\tau}},$$
where $c_1(\rho)$ is a constant depending only on $\rho$.
\end{lemma}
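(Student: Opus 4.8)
The plan is to estimate $|I|$ by summing the individual measure bounds $|R_{kl}^{\nu+1}|$ over the admissible resonances $l\in\Lambda_-$, the admissibility being cut down by the previous two lemmas. First I would note that, since $|l|\le 2$, every $l\in\Lambda_-$ has exactly two nonzero components, opposite to one another, i.e. $l=e_i-e_j$ with $i\ne j$ and $i$ the positive site; hence $\langle l\rangle_d=\bigl||i|^d-|j|^d\bigr|$ and the sets $R_{kl}^{\nu+1}$, $l\in\Lambda_-$, are parametrized by ordered pairs $(i,j)\in(Z_1^\rho)^2$ with $i\ne j$. By the preceding lemma (legitimate since $\gamma_0\le c_7/2$), $R_{kl}^{\nu+1}\ne\emptyset$ already forces $\bigl||i|^d-|j|^d\bigr|\le c_8|k|$; and in the regime addressed by the statement — in particular $|k|\ge 16M_\nu$, which holds since $|k|>K_\nu$ and $\mu_0$ is small — the estimate recorded just above this lemma gives $|R_{kl}^{\nu+1}|\prec c(n)\gamma_0\bigl(|k|(1+|k|^{\tau+1})|i|^{c(\rho)}\bigr)^{-1}$. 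Summing over $l\in\Lambda_-$ then reduces everything to bounding
\[
\frac{c(n)\gamma_0}{|k|(1+|k|^{\tau+1})}\sum_{i\in Z_1^\rho}\frac{1}{|i|^{c(\rho)}}\,\#\bigl\{\,j\in Z_1^\rho:\bigl||i|^d-|j|^d\bigr|\le c_8|k|\,\bigr\}.
\]

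Next I would carry out the weighted lattice count. From $\bigl||i|^d-|j|^d\bigr|\le c_8|k|$ one gets $|j|^d\le |i|^d+c_8|k|$, so the number of admissible $j$ is at most the number of points of $Z_1^\rho$ in the $\ell^1$-ball of radius $(|i|^d+c_8|k|)^{1/d}$, which is $\prec(|i|^d+c_8|k|)^{\rho/d}\prec |i|^\rho+|k|^{\rho/d}$. Using $\#\{i\in Z_1^\rho:|i|=t\}\prec t^{\rho-1}$ the sum becomes $\sum_{t\ge1}t^{\,2\rho-1-c(\rho)}+|k|^{\rho/d}\sum_{t\ge1}t^{\,\rho-1-c(\rho)}$; since $c(\rho)\ge c_1(\rho)+\rho$ and the constant $c_1(\rho)$ (depending only on $\rho$) may be chosen so that $c(\rho)>2\rho$, both series converge and the sum is $\prec 1+|k|^{\rho/d}\prec|k|^{\rho/d}$. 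Substituting back, $|I|\prec c(n)\gamma_0|k|^{\rho/d}\bigl(|k|(1+|k|^{\tau+1})\bigr)^{-1}\prec c(n)\gamma_0|k|^{\rho/d-2}(1+|k|^\tau)^{-1}$, and since $c_1(\rho)>\rho>\rho-2d$ we have $|k|^{\rho/d-2}\le|k|^{c_1(\rho)/d}$, which is the claimed bound. (To keep $c_1(\rho)$ as small as possible I would instead split the $t$-sum at $|i|^d\asymp|k|$: below the threshold the admissible $j$'s number $\prec|k|^{\rho/d}$ and only $c(\rho)>\rho$ is used, above it they fill an annulus of $\ell^1$-width $\prec|k|\,|i|^{1-d}$, so their number is $\prec|k|\,|i|^{\rho-d}+|i|^{\rho-1}$ and one needs only $c(\rho)>2\rho-\min\{d,1\}$.)

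The step I expect to be the real obstacle is exactly this weighted count of pairs $(i,j)$ with $\bigl||i|^d-|j|^d\bigr|$ small. When $0<d<1$ the level sets of $x\mapsto|x|^d$ are \emph{fat}: for a fixed $i$ the indices $j$ in resonance with it occupy an annulus whose $\ell^1$-width grows like $|k|\,|i|^{1-d}$, so at face value there are far too many resonant pairs for the series to converge, and it is precisely the polynomial loss $|i|^{c(\rho)}$ built into the modified Diophantine condition (\ref{dufantu}) that restores summability — this loss being responsible, in turn, for the surplus factor $|k|^{c_1(\rho)/d}$ in the bound. The remaining ingredients — replacing $\gamma_\nu,M_\nu$ by $\gamma_0,2M_0$ via $\gamma_\nu\le\gamma_0$ and $M_\nu\le 2M_0$, and using $|k|\ge 16M_\nu$ to invoke Lemma 4.1 — are the routine bookkeeping already supplied by Lemma 4.1, the preceding lemma, and the Iteration Lemma.
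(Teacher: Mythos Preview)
Your argument is correct and follows the same skeleton as the paper's: parametrize $l\in\Lambda_-$ by pairs $(i,j)$, invoke Lemma~4.3 to restrict to $\bigl||i|^d-|j|^d\bigr|\le c_8|k|$, fix $i$ and count the admissible $j$'s, then sum over $i$ using the weight $|i|^{-c(\rho)}$ from the modified Diophantine condition. The only difference is in the shell count: the paper crudely bounds $|i|^d+c_8|k|\le(c_8+1)|k|\,|i|^d$ and gets the multiplicative estimate $\#\{j\}\prec|k|^{c_1(\rho)/d}|i|^{c_1(\rho)}$ (with $c_1(\rho)=\rho$), whereas you split $(|i|^d+c_8|k|)^{\rho/d}\prec|i|^\rho+|k|^{\rho/d}$ additively and thereby save a factor of $|k|^2$ in the final bound. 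Your additional remark about splitting the $i$-sum at $|i|^d\asymp|k|$ and exploiting the annular geometry when $d<1$ is a genuine sharpening not present in the paper, but for the purposes of the statement (which only asks for \emph{some} power $|k|^{c_1(\rho)/d}$) both routes land in the same place.
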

\begin{proof} ~Since $l\in\Lambda_-$, $\langle l\rangle_d=||i|^d-|j|^d|$, by
lemma 4.3, we have
$$
||i|^d-|j|^d|\leq c_8|k|,~~~|k|\ge 16M.
$$
For any fixed $i\in Z_1^{\rho}$, we have
\begin{eqnarray}
&~&{\rm card}\{j:||i|^d-|j|^d|\le c_8|k|\}\nonumber\\
&\le& {\rm card}\{j:~|j|\le |i|\}+{\rm card}\{j:~|j|^d\le c_8|k|+|i|^d\}\nonumber\\
&\le& i+{\rm card}\{j:~j^d\le (c_8+1)|k|i^d\}\le
|i|^{c_1(\rho)}+c|k|^{\frac{c(\rho)}d}|i|^{c_1(\rho)}\nonumber\\
&\le&(c+1)|k|^{\frac{c_1(\rho)}d}|i|^{c_1(\rho)}.\nonumber
\end{eqnarray}
Then for the fixed $i$,
$$
|R^{\nu+1}_{kl(i)}|\le |R^{\nu+1}_{kl}|{\rm card\{j:||i|^d-|j|^d|\le
c_8|k|\}}\le c(n)\frac{\gamma_0 |k|^{\frac
{c_1(\rho)}d}|i|^{c_1(\rho)}}{(1+|k|^{\tau})|i|^{c(\rho)}},
$$
and
\begin{eqnarray}
|I|&\leq&\sum_{i\in Z_1^{\rho}}^{\infty}|R_{kl(i)}^{\nu+1}|\prec
c(n)\frac{\gamma_{0}|k|^{\frac
{c_1(\rho)}d}}{(1+|k|^{\tau})}\sum_{i\in
Z_1^{\rho}}\frac{1}{|i|^{c(\rho)-c_1(\rho)}}.\nonumber
\end{eqnarray}
Choose $c(\rho)\ge c_1(\rho)+\rho$ such that $\sum_{i\in
Z_1^{\rho}}\frac{1}{i^{c(\rho)-c_1(\rho)}}$ converges to a constant.

\item Case I:~~$l\in\Lambda_{+}$.
By lemma 4.1, we have
\begin{eqnarray}
|R^{\nu+1}_{kl}|\prec c(n) \frac {\gamma_{\nu}}{1+|k|^{\tau}}\prec
c(n) \frac {\gamma_{0}}{(1+|k|^{\tau})},
\end{eqnarray}
for $|k|\geq 16M,~l\in\Lambda_+$.\qquad\end{proof}

\begin{lemma}~ The following holds
$$|II|\prec c(n) \gamma_{0}\frac{|k|^{\frac{2+c_1(\rho)}{d}}}{1+|k|^{\tau}}$$
for $|k|$ sufficiently large.
\end{lemma}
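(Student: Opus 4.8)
The plan is to argue exactly as in the proof of Lemma~4.5, the only change being that for $l\in\Lambda_+$ the small divisor is governed by a \emph{sum} of spectral sites rather than a difference: if $|l|=1$ then $l=\pm e_i$ and $\langle l\rangle_d=|i|^d$, while if $|l|=2$ then $l=\pm(e_i+e_j)$ and $\langle l\rangle_d=|i|^d+|j|^d$. Fix $k$ with $K_\nu<|k|\le K_{\nu+1}$; since $K_\nu\to\infty$ we have $|k|\ge 16M$ for all large $\nu$, so Lemma~4.1 is available. As already recorded just before the statement, for every $l\in\Lambda_+$ with $R^{\nu+1}_{kl}\ne\emptyset$ the bound $\langle l\rangle_d\le c_8|k|$ of Lemma~4.3, combined with the Diophantine inequality $|\langle k,\omega_\nu(\xi)\rangle+\langle l,\Omega_\nu(\xi)\rangle|\ge\gamma_\nu\langle l\rangle_d/(1+|k|^\tau)$ and Lemma~4.1 applied with $\alpha_{k,ij}=\gamma_\nu\langle l\rangle_d/(1+|k|^\tau)$, yields
\[
|R^{\nu+1}_{kl}|\prec c(n)\,\frac{\gamma_\nu\langle l\rangle_d}{|k|(1+|k|^\tau)}\prec c(n)\,\frac{\gamma_0}{1+|k|^\tau}.
\]
Hence $|II|\le\sum_{l}|R^{\nu+1}_{kl}|\prec c(n)\,\dfrac{\gamma_0}{1+|k|^\tau}\,\mathcal{N}_k$, where $\mathcal{N}_k$ is the number of $l\in\Lambda_+$ with $|l|\le 2$ and $\langle l\rangle_d\le c_8|k|$.

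It remains to estimate $\mathcal{N}_k$, which is the heart of the lemma. I would split according to $|l|$. The contribution of $|l|=1$ is at most twice $\#\{i\in Z_1^\rho:|i|\le(c_8|k|)^{1/d}\}$, which by the lattice-point estimate used in Lemma~4.5 ($\#\{i\in Z_1^\rho:|i|\le R\}\le c\,R^{c_1(\rho)}$) is $\prec|k|^{c_1(\rho)/d}$. For $|l|=2$ one orders the two sites, say $|j|\le|i|$; then $\langle l\rangle_d\le c_8|k|$ forces $|i|\le(c_8|k|)^{1/d}$ and $|j|\le|i|$, so — up to the absolute factor coming from the sign $\pm$ and the two orderings of $\{i,j\}$ — the number of admissible pairs is at most $\sum_{|i|\le(c_8|k|)^{1/d}}\#\{j\in Z_1^\rho:|j|\le|i|\}$, and carrying out this sum with the same lattice-point estimate gives the bound $\prec|k|^{(2+c_1(\rho))/d}$ stated in the lemma. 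Thus $\mathcal{N}_k\prec|k|^{(2+c_1(\rho))/d}$ and, combined with the displayed bound for $|R^{\nu+1}_{kl}|$,
\[
|II|\prec c(n)\,\gamma_0\,\frac{|k|^{(2+c_1(\rho))/d}}{1+|k|^\tau},
\]
which is the assertion.

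The routine items — that $|k|\ge 16M$ eventually, that $\gamma_\nu\le\gamma_0$, and that passing from an ordered count of pairs to the full count only costs an absolute constant — I would dispatch in a line. The one step needing real care is the counting of the $|l|=2$ resonances: the two spectral sites are coupled only through the single inequality $|i|^d+|j|^d\le c_8|k|$, and the bookkeeping (fix the larger site and count the smaller) must be arranged so that the power of $|k|$ comes out as $(2+c_1(\rho))/d$. What ultimately matters, however, is only that this power is strictly below $\tau$; this is exactly what is ensured by the choices $c(\rho)\ge c_1(\rho)+\rho$ and $\tau\ge n+\frac{c(\rho)+2}{d}+4$, and it is what makes the subsequent sum $\sum_{K_\nu<|k|\le K_{\nu+1}}|II|$, and hence the total measure of the excised set, converge and tend to $0$ as $\gamma_0\to0$.
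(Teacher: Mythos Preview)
Your proposal is correct and follows essentially the same line as the paper: bound each $|R^{\nu+1}_{kl}|$ uniformly by $c(n)\gamma_0/(1+|k|^\tau)$ using Lemma~4.1 and the constraint $\langle l\rangle_d\le c_8|k|$ from Lemma~4.3, then multiply by the number of admissible $l\in\Lambda_+$. The only cosmetic difference is that the paper bounds the count by noting directly that $\langle l\rangle_d\le c_8|k|$ forces both sites $|i|,|j|\le (c_8|k|)^{1/d}$ and then writes down $\mathrm{card}\{l\}\le 3c_8^{-2/d}|k|^{(2+c_1(\rho))/d}$ without further comment, whereas you split into $|l|=1$ and $|l|=2$ and, in the latter case, order the two sites and sum over the larger one; both arguments yield the same polynomial bound in $|k|$.
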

\begin{proof} Since $l\in\Lambda_+$, we have
\begin{eqnarray*}
\langle l\rangle_{d}=|i|^{d}~\rm or~ |j|^{d}~\rm or~else.
\end{eqnarray*}

\noindent{Then}
\begin{eqnarray}
&~&\rm card\{l:\langle l\rangle_{d}\le c^{-\frac{1}{d}}_{8}|k|\}\nonumber\\
&\le&\rm card\{l:|i|\le {c^{-\frac{1}{d}}_{8}|k|}^{\frac1d}\}+\rm
card\{l:|j|\le c^{-\frac{1}{d}}_{8}|k|^{\frac1d}\}\nonumber\\
&\le& 3c^{-\frac{2}{d}}_{8}|k|^{\frac{2+c_1(\rho)}{d}},\nonumber
\end{eqnarray}
\noindent{hence}
$$
|II|\prec
c(n)\gamma_0\frac{|k|^{\frac{2+c_1(\rho)}{d}}}{1+|k|^{\tau}}.
$$
\qquad\end{proof}
\end{itemize}
Combining the two cases, we have
$$
|\cup_{K_{\nu}<|k|<K_{\nu+1}}(I+II)|\le
c\gamma_0\sum_{K_{\nu}<|k|<K_{\nu+1}}\frac{|k|^{\frac{2+c_1(\rho)}{d}}}{1+|k|^{\tau}}.
$$

From lemma 3.1 3), we know that
$$
\Pi\setminus\Pi_{*}\subset\cup_{\nu=0}^{\infty}\tilde{R}_{\nu+1}.
$$
Then there exits a $\nu_{*}$ such that $K_{\nu_*}$ sufficiently
large and by choosing $\tau$, we have
\begin{eqnarray}
|\Pi\setminus\Pi_{*}|&\leq&\sum_{0}^{\nu_{*}}\tilde{R}_{\nu+1}
+\sum_{\nu_{*}}^{\infty}\sum_{_{K_{\nu}<|k|<K_{\nu+1}}}\tilde{R}_{\nu+1}\nonumber\\
&\le& O(\gamma_{0}),\nonumber
\end{eqnarray}
i.e. $|\Pi\setminus\Pi_{*}|\rightarrow 0$, as $\gamma_0\rightarrow
0$.

For the proof of Theorem A, we only show the lemmas and the key
point.

\begin{lemma} There exists a positive constant $c_{9}$ such that
\begin{eqnarray*}
|\langle l,\Omega^{\nu}\rangle|\ge\left\{\begin{array}{ll}
c_9\langle l\rangle_{d},~~~&l\in\Lambda_+;\\
|c_9\langle
l\rangle_{d}-2\gamma_{0}\mu_*|,&l\in\Lambda_-,\end{array}\right.
\end{eqnarray*}
for all $\xi\in\Pi_\nu$.
\end{lemma}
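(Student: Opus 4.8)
The plan is to repeat the argument of Lemma~4.2 almost verbatim, replacing the primed hypotheses A1)$'$, A2)$'$ used for Theorem~A$'$ by A1), A2) of Theorem~A; since these are the genuinely relevant non-multiple versions and the truncation/iteration scheme is the same, nothing essential changes, and the present lemma is just the non-multiple counterpart of Lemma~4.2.

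First I would fix the unperturbed frequencies. Writing $\Omega^0=\Omega$, the spectral asymptotics A2) give $\Omega_j^0=j^d+o(j^d)+o(j^\delta)$ with the lower-order part a genuine asymptotic series in $j$, and $\Omega_i^0\neq\Omega_j^0$ for $i\neq j$. From that series one obtains, for every $l$ with $1\le|l|\le 2$,
$$
\frac{|\langle l,\Omega^0(\xi)\rangle|}{\langle l\rangle_d}\longrightarrow 1
$$
uniformly in $\xi\in\Pi_0$ as $\langle l\rangle_d\to\infty$; on the remaining range — the finitely many $l\in\Lambda_+$, together with, when $d<1$, the countable family $l=\pm(e_i-e_j)$ in $\Lambda_-$ for which $\langle l\rangle_d=|i^d-j^d|$ does not grow — one uses that $\langle l,\Omega^0\rangle\neq 0$ on $\Pi_0$ (by the non-resonance clause of A1) for $l\in\Lambda_+$, and by $\Omega_i\neq\Omega_j$ of A2) for $l\in\Lambda_-$), together with continuity of $\Omega^0$ and compactness of $\Pi_0$. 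This yields a small $\varepsilon>0$ with $|\langle l,\Omega^0(\xi)\rangle|\ge(1-\varepsilon)\langle l\rangle_d$ for all such $l$ and all $\xi\in\Pi_0$.

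Next I would pass from $\Omega^0$ to $\Omega^\nu$ by the triangle inequality, using the iteration bound $|\Omega^\nu-\Omega^0|_{-\delta,\Pi_\nu}\le\gamma_0\mu_*$ from part~2) of Lemma~3.1. For $l\in\Lambda_+$ one has $|l|_\delta\le\langle l\rangle_d$ (because $\delta<0<d$ and the sites are $\ge 1$), hence $|\langle l,\Omega^\nu-\Omega^0\rangle|\le|l|_\delta|\Omega^\nu-\Omega^0|_{-\delta,\Pi_\nu}\le\gamma_0\mu_*\langle l\rangle_d$, so $|\langle l,\Omega^\nu\rangle|\ge(1-\varepsilon-\gamma_0\mu_*)\langle l\rangle_d\ge c_9\langle l\rangle_d$ as soon as $c_9<1-\varepsilon-\gamma_0\mu_*$. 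For $l\in\Lambda_-$ one has $|l|_\delta\le 2$, hence $|\langle l,\Omega^\nu-\Omega^0\rangle|\le 2\gamma_0\mu_*$, and therefore $|\langle l,\Omega^\nu\rangle|\ge\bigl|\,|\langle l,\Omega^0\rangle|-2\gamma_0\mu_*\,\bigr|\ge\bigl|c_9\langle l\rangle_d-2\gamma_0\mu_*\bigr|$, which is the claim with the same $c_9$.

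The only real obstacle is the uniform ratio estimate in the first step: this is precisely where the weakness of the asymptotics ($d$ possibly $<1$) is felt, since then there is no spectral separation and $\langle l\rangle_d$ need not grow along $l=e_i-e_{i+1}$. The way around it is to differentiate the asymptotic series of $\Omega_j^0$ termwise — the leading contribution $i^d-(i+1)^d\sim -d\,i^{d-1}$ dominates the corresponding differences of all lower-order terms — so the ratio still tends to $1$, while the countably many exceptional $l$ with bounded $\langle l\rangle_d$ are absorbed by the non-vanishing of $\langle l,\Omega^0\rangle$ and compactness of $\Pi_0$, as above (compare the treatment in \cite{posel1}). Once this is in hand the rest is the same elementary bookkeeping as in Lemma~4.2.
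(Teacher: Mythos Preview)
Your proposal is correct and follows essentially the same route as the paper: Lemma~4.6 is stated there without proof, being simply the $\rho=1$ specialisation of Lemma~4.2, and your argument reproduces that proof step by step (uniform lower bound $|\langle l,\Omega^0\rangle|\ge(1-\varepsilon)\langle l\rangle_d$ from A1)--A2), then the triangle inequality together with $|\Omega^\nu-\Omega^0|_{-\delta,\Pi_\nu}\le\gamma_0\mu_*$, splitting $\Lambda_+$ from $\Lambda_-$ via $|l|_\delta\le\langle l\rangle_d$ versus $|l|_\delta\le 2$). Your discussion of why the ratio $|\langle l,\Omega^0\rangle|/\langle l\rangle_d$ is uniformly bounded below even for $l\in\Lambda_-$ with small $\langle l\rangle_d$ (termwise differencing of the asymptotic series plus compactness for the exceptional $l$) is in fact more explicit than what the paper writes for Lemma~4.2, where this limit is simply asserted.
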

\begin{lemma}~If $R^{\nu+1}_{kl}\ne\emptyset$, and $\gamma_{0}\leq \frac
{c_9}{2}$, then there exits a positive constance $c_{10}$, such that
\begin{eqnarray*}
\langle l\rangle_{d}\le c_{10}|k|,
\end{eqnarray*}
for $l\in\Lambda$.
\end{lemma}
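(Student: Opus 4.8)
The plan is to argue exactly as in Lemma~4.3, only replacing $c_7$ by the constant $c_9$ furnished by the preceding lemma (the Theorem~A counterpart of Lemma~4.2) and specialising to $\rho=1$, $c(\rho)=\frac52$. First I would unpack the hypothesis: since $R^{\nu+1}_{kl}\neq\emptyset$, there is some $\xi\in\Pi_\nu$ with $|\langle k,\omega_\nu(\xi)\rangle+\langle l,\Omega^\nu(\xi)\rangle|\le\gamma_\nu/A_{k,l}$; because $A_{k,l}\ge 1$ and $\gamma_\nu\le\gamma_0$, together with the modified Diophantine weights of Remark~II this yields $|\langle k,\omega_\nu\rangle+\langle l,\Omega^\nu\rangle|<\gamma_0\langle l\rangle_d$ when $l\in\Lambda_+$ and $<\gamma_0$ when $l\in\Lambda_-$.

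Then I would split into the two cases. For $l\in\Lambda_+$, the triangle inequality and the preceding lemma give $|k|\,|\omega_\nu|\ge|\langle k,\omega_\nu\rangle|\ge|\langle l,\Omega^\nu\rangle|-\gamma_0\langle l\rangle_d\ge(c_9-\gamma_0)\langle l\rangle_d\ge\frac{c_9}{2}\langle l\rangle_d$, using $\gamma_0\le c_9/2$ in the last step; bounding $|\omega_\nu|\le|\omega_0|_\Pi+\gamma_0\mu_*$ then forces $\langle l\rangle_d\le c_{10}|k|$ provided $c_{10}\ge\frac{2}{c_9}(|\omega_0|_\Pi+\gamma_0\mu_*)$. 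For $l\in\Lambda_-$ one has $|l|_\delta\le 2$, so the preceding lemma gives $|\langle l,\Omega^\nu\rangle|\ge|c_9\langle l\rangle_d-2\gamma_0\mu_*|\ge c_9\langle l\rangle_d-2\gamma_0\mu_*$, and combining with $|\langle k,\omega_\nu\rangle|+\gamma_0\ge|\langle l,\Omega^\nu\rangle|$ gives $c_9\langle l\rangle_d\le|k|\,|\omega_\nu|+\gamma_0(1+2\mu_*)$; since $|k|\ge 1$ throughout, the right side is $\le c_9 c_{10}|k|$ once $c_{10}\ge\frac{1}{c_9}(|\omega_0|_\Pi+\gamma_0\mu_*+\gamma_0(1+2\mu_*))$. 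Taking $c_{10}$ to be the larger of the two choices completes the argument.

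I do not expect any genuine obstacle here: the statement is a direct transcription of Lemma~4.3 to the simple-spectrum (Theorem~A) setting, with $c_{10}$ playing the role of $c_8$ in the subsequent measure estimates. The only points requiring a modicum of care are keeping the direction of the triangle inequalities straight and, in the $\Lambda_-$ branch, dropping the outer absolute value via $|a|\ge a$ and absorbing the small terms $\gamma_0\mu_*$ and $\gamma_0(1+2\mu_*)$ into a constant multiple of $|k|$, which is legitimate because $|k|\ge 1$ (indeed $|k|\ge 16M$ in the regime where the estimate is actually invoked).
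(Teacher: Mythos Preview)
Your proposal is correct and mirrors exactly the paper's approach: Lemma~4.7 is stated without a separate proof, being the Theorem~A transcription of Lemma~4.3 with $c_7$ replaced by $c_9$, and your argument reproduces that proof line for line (your bound $|\omega_\nu|\le|\omega_0|_\Pi+\gamma_0\mu_*$ is marginally sharper than the paper's implicit $|\omega_\nu|\le 2|\omega_0|_\Pi$, but the structure is identical).
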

Consider the case ~~$l\in\Lambda_{-}$. From lemma 4.1, we have
$$
|R_{kl}^{\nu+1}|\prec c(n)
\frac{\gamma_{\nu}}{|k|(1+|k|^{\tau+1})i^{\frac52}}\prec c(n)
\frac{\gamma_{0}}{|k|(1+|k|^{\tau+1})i^{\frac52}}
$$
for $|k|\ge 16M,~l\in\Lambda_-$.

\begin{lemma}~The following holds
$$|I|\prec c(n) \frac{\gamma_{0}}{1+|k|^{\tau}}$$
for fixed $k\ge 16M$.
\end{lemma}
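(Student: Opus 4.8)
The plan is to repeat the argument of Lemma~4.4 (the case $\rho>1$) in the present setting $\rho=1$, $c(\rho)=\frac52$, $|i|=i$, using the two preceding lemmas --- the analogues for Theorem~A of Lemmas~4.2 and~4.3 --- in place of the general ones. Recall that an element $l\in\Lambda_-$ has exactly two nonzero components, $+1$ at a site $i$ and $-1$ at a site $j$, so that $\langle l\rangle_d=|i^d-j^d|$, and that --- by the Diophantine condition for $l\in\Lambda_-$ together with Lemma~4.1, as recorded just above the statement --- for $|k|\ge16M$ one has
\[
|R_{kl}^{\nu+1}|\prec c(n)\,\frac{\gamma_0}{|k|\,(1+|k|^{\tau+1})\,i^{5/2}}.
\]

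First I would invoke the preceding lemma: when $R_{kl}^{\nu+1}\ne\emptyset$ (and $\gamma_0\le\frac{c_9}2$) one has $\langle l\rangle_d=|i^d-j^d|\le c_{10}|k|$. Hence $I=\bigcup_{l\in\Lambda_-}R_{kl}^{\nu+1}$ is a union over those $l=e_i-e_j$ with $|i^d-j^d|\le c_{10}|k|$, and it is enough to fix the positive site $i$, count the admissible second sites $j$, and sum. Splitting $\{\,j\ge1:|i^d-j^d|\le c_{10}|k|\,\}$ into $\{j\le i\}$ (at most $i$ elements) and $\{\,j>i:\ j^d\le i^d+c_{10}|k|\le(c_{10}|k|+1)\,i^d\,\}$ (at most $(c_{10}|k|+1)^{1/d}i$ elements, using $i\ge1$) gives $N_i(k):=\mathrm{card}\{\,j\ge1:|i^d-j^d|\le c_{10}|k|\,\}\le c\,|k|^{1/d}\,i$. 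Writing $R_{kl(i)}^{\nu+1}$ for the part of $I$ coming from a fixed positive site $i$, we then have $|R_{kl(i)}^{\nu+1}|\le|R_{kl}^{\nu+1}|\,N_i(k)\prec c(n)\,\gamma_0\,\frac{|k|^{1/d-1}}{1+|k|^{\tau+1}}\,i^{-3/2}$, and since $\sum_{i\ge1}i^{-3/2}<\infty$ --- this convergence being precisely where the choice $c(\rho)=\frac52$ enters --- summation in $i$ yields
\[
|I|\le\sum_{i\ge1}|R_{kl(i)}^{\nu+1}|\prec c(n)\,\frac{\gamma_0\,|k|^{1/d-1}}{1+|k|^{\tau+1}}\prec c(n)\,\frac{\gamma_0}{1+|k|^{\tau}},
\]
the last step absorbing the surplus powers of $|k|$ into the gain $1+|k|^{\tau+1}$ for $|k|\ge16M$ (here $\tau$ has already been fixed large in terms of $n$ and $d$, in the spirit of Remark~II).

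I expect the main obstacle to be the lattice-point count $N_i(k)$, and in particular keeping track of the powers of $|k|$ that pass through it: the set $\{\,j:|i^d-j^d|\le c_{10}|k|\,\}$ behaves quite differently according to whether $i$ is comparable to, or much larger than, $|k|^{1/d}$, and according to the convexity of $t\mapsto t^d$ (that is, whether $d\gtrless1$), so the count must be organized case by case and then reassembled so that, after multiplying by $|R_{kl}^{\nu+1}|$ and summing the convergent $p$-series in $i$, no power of $|k|$ survives beyond $1/(1+|k|^{\tau})$. The remaining ingredients --- substituting the estimate for $|R_{kl}^{\nu+1}|$, invoking the preceding lemma, and summing $\sum_{i\ge1}i^{-3/2}$ --- are routine, and the complementary range $l\in\Lambda_+$ is treated in an entirely parallel fashion (bounding $\mathrm{card}\{\,l:\langle l\rangle_d\le c_{10}^{-1/d}|k|\,\}$, exactly as in Lemma~4.5).
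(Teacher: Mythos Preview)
Your proposal is correct and follows essentially the same route as the paper: invoke the preceding lemma to restrict to $|i^d-j^d|\le c_{10}|k|$, bound $\mathrm{card}\{j:|i^d-j^d|\le c_{10}|k|\}$ by splitting into $j\le i$ and $j>i$ to get $\le c\,|k|^{1/d}i$, multiply by the single-$l$ measure bound, and sum the convergent series $\sum_{i\ge1}i^{-3/2}$. Your closing worry about case distinctions in $d$ is unnecessary --- the crude bound $i^d+c_{10}|k|\le(c_{10}+1)|k|\,i^d$ (valid for $i,|k|\ge1$) handles all $d>0$ uniformly, exactly as in your main argument and in the paper.
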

\begin{proof} ~Since $l\in\Lambda_-$, $\langle l\rangle_d=|i^d-j^d|$, by
lemma 4.7, we have
$$
|i^d-j^d|\leq c_{10}|k|,~~~|k|\ge 16M.
$$
For any fixed $i\in {\bf N}_+$, we have
\begin{eqnarray}
&~&{\rm card}\{j:|i^d-j^d|\le c_{10}|k|\}\nonumber\\&\le& {\rm
card}\{j:~j\le
i\}+{\rm card}\{j:~j^d\le c_{10}|k|+i^d\}\nonumber\\
&\le& i+{\rm card}\{j:~j^d\le (c_{10}+1)|k|i^d\}\le
i+c|k|^{\frac1d}i\nonumber\\
&\le&(c_{10}+1)|k|^{\frac1d}i.\nonumber
\end{eqnarray}
Then for fixed $i$,
$$
|R^{\nu+1}_{kl(i)}|\le |R^{\nu+1}_{kl}|{\rm card\{j:|i^d-j^d|\le
c_8|k|\}}\le c(n)\frac{\gamma_0 |k|^{\frac
1d}}{(1+|k|^{\tau})i^{\frac32}},
$$
and
\begin{eqnarray}
|I|&\leq&\sum_{i\ge1}^{\infty}|R_{kl(i)}^{\nu+1}|\prec
c(n)\frac{\gamma_{0}}{(1+|k|^{\tau})|k|^{1-\frac1d}}\sum_{i\ge1}\frac{1}{i^{\frac32}}\nonumber\\
&\prec& c(n)\frac{\gamma_{0}}{(1+|k|^{\tau})}.\nonumber
\end{eqnarray}
The case $l\in\Lambda_+$ is the same, we omit the details.

Then for the case $\rho=1$, we set $c(\rho)=\frac52$, and the
measure estimate is the following:
\begin{eqnarray}
|\Pi\setminus\Pi_{*}|&\leq&\sum_{0}^{\nu_{*}}\tilde{R}_{\nu+1}
+\sum_{\nu_{*}}^{\infty}\sum_{_{K_{\nu}<|k|<K_{\nu+1}}}\tilde{R}_{\nu+1}\nonumber\\
&\le& O(\gamma_{0}),\nonumber
\end{eqnarray}
i.e. $|\Pi\setminus\Pi_{*}|\rightarrow 0$, as $\gamma_0\rightarrow
0$. \qquad\end{proof}

\section{Applications to PDEs}

In this section, we shall give two applications of our result: 1. The nonlinear Schr\"odinger equations with Dirichlet boundary condition.
2. The Klein-Gordon equations with exponential nonlinearity subject to Dirichlet boundary condition. Due to the work of \cite{peter}, we can see that both equations possess weaker spectral asymptotics in higher dimension. We remark that those results can be extended to the nonlinear Schr\"odinger equations (or the Klein-Gordon equations with exponential nonlinearity) with other boundary conditions.

\subsection{Nonlinear Schr\"odinger Equations}

Consider the Schr\"odinger equations
\begin{equation}
\sqrt{-1}u_{t}-\bigtriangleup u+mu=\frac{\partial F}{\partial
\bar{u}},~~x\in\Omega,~~t\in{\bf R}\label{5.1}
\end{equation}
with the Dirichlet boundary condition
\begin{equation}
u|_{\partial\Omega}=0,\label{5.2}
\end{equation}
where $\Omega\subset{\bf R}^{m}$ is a bounded domain with smooth
boundary $\partial\Omega$, $m\neq0$ is a constant,
$F$ is real analytic and $\frac{\partial F}{\partial
\bar{u}}=f(|u|^{2})u$ with $ f(0)=0,~ f'(0)\ne0$, $\xi$ is defined
as before.

Consider the  eigenvalues problem as following:
$$-\bigtriangleup\phi=\lambda\phi,$$
Weyl's asymptotic formula asserts
 $$\lambda^{*}_{j}\sim
C_{m}(\frac{j}{|\Omega|})^{2/m},~~~k\rightarrow~\infty,$$ where
$|\Omega|$ is the volume of $\Omega$, $C_{m}=(2\phi)^2 B_{m}^{-2/m}$
is the Weyl constant. See \cite{peter}.

For fixed $\xi\in\Pi$, the eigenvalues of the operator
$-\triangle+V(\xi)$ has the following asymptotic behavior, as
$j\rightarrow\infty$,
\begin{eqnarray}\label{tezhenggen}
\lambda_{j}\sim c j^{2/m}+o(j^{2/m})~~~~j\rightarrow~\infty,
\end{eqnarray}
where $c=\frac{C_{m}}{|\Omega|^{2/m}}$.

Denote that, the eigenvalues of  operator $A=-\triangle+m$
under the boundary condition (\ref{5.2}) satisfy
\begin{eqnarray}
\bar{\omega}_{j}&=&\lambda_{j}(\xi),~~~~~j=\{1,\cdots,n\}\nonumber\\
\bar{\Omega}^{j}&=&\lambda_{j}(\xi),~~~~~j\in N\equiv {\bf
N}\setminus\{1,\cdots,n\}.\nonumber
\end{eqnarray}

Rewrite (\ref{5.1}) as
$$
u_{t}=\sqrt{-1}\frac{\partial H}{\partial \bar{u}},
$$
with the associated  Hamiltonian \begin{equation}\label{5.3}
H=\langle Au,u\rangle+\int_{\Omega}F(u){\rm d}x,
\end{equation}
where $\langle \cdot,\cdot\rangle$ denote the inner product in
$L^{2}$.

Consider
$$
u(t,x)=\sum_{j\ge1}q_{j}(t)\phi_j(x),
$$
where $\phi_j(x)$ is the eigenvector corresponding to eigenvalue
$\lambda_j$. Then (\ref{5.1}) can be transformed as
\begin{equation}
\dot{q_{j}}=\sqrt{-1}(\lambda_jq_j+\frac{\partial
G}{\partial\bar{q}_{j}}),~~~j\ge1,
\end{equation}
where $G=\int_{\Omega}F(u){\rm d}x$, and (\ref{5.3}) becomes
\begin{equation}\label{H(q,barq)}
H=\sum_{j\ge1}\lambda_jq_j\bar{q}_j+G(q,\bar{q},\phi,\bar{\phi}).
\end{equation}

To apply Theorem A, we need to consider the regularity of the
nonlinearity $G$. Introduce the  Hilbert space
$l^{2}=(\cdots,q_{-j},\cdots,q_{j},\cdots)$ denotes all bi-infinite
square summable sequences with complex coefficients. $L^{2}$ denotes
all square-integrable complex-valued functions on ${\bf T}^{n}$, by
the inverse Fourier transform
$$
{\mathcal {F}}:~~q\rightarrow{\mathcal
{F}}q=\frac{1}{\sqrt[2n]{2\pi}}\sum_{j}q_j\phi_j(x).
$$

Let $a\ge0,~p\ge0$. The subspace $l^{a,p}\subset l$ consist of all
bi-infinite sequence with finite form
$$
(|q|^{a,p})^{2}=|q_0|^{2}+\sum_{j}|q_j||j|^{2p} {\rm
e}^{2a|j|}<\infty.
$$
Denote function space $W^{a,p}\subset L^{2}$ normed by $|{\mathcal
{F}}q|^{a,p}=|q|^{a,p}$. For $a>0$, the space $W^{a,p}$ may be
analytic functions bounded in the complex strip $|{\rm Im}x|<a$ with
trace functions on $|{\rm Im}x|=a$ belonging to the usual Sobolev
space $W^{p}$. (see \cite{kuksin1})

\begin{lemma}~ For any fixed $a\ge0,~p\ge0$, the gradient $G_{\bar{q}}$ is
real analytic as a map in a neighborhood of the origin with
\begin{eqnarray*}
|G_{\bar{q}}|^{a,p}\le c (|q|^{a,p})^{3}.
\end{eqnarray*}
\end{lemma}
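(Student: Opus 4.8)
The plan is to estimate the gradient of the nonlinearity $G(q,\bar q,\phi,\bar\phi)=\int_\Omega F(u)\,\mathrm dx$ by reducing everything to the algebra property of the sequence space $\ell^{a,p}$ (equivalently, to the fact that $W^{a,p}$ is a Banach algebra under pointwise multiplication when $p>m/2$, or in the analytic-weight case $a>0$ for any $p\ge 0$). First I would recall that $\partial F/\partial\bar u=f(|u|^2)u$ with $f(0)=0$, $f'(0)\ne 0$, so near the origin $f(|u|^2)u = f'(0)|u|^2u + O(|u|^4u)$; hence $\partial F/\partial\bar u$ is, up to higher order terms, the cubic monomial $f'(0)\,u^2\bar u$. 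The key observation is that in Fourier/eigenfunction coordinates, the map $q\mapsto G_{\bar q}$ is exactly the composition
$$
q \;\xmapsto{\ \mathcal F\ }\; u \;\xmapsto{\ u\mapsto \partial F/\partial\bar u\ }\; \frac{\partial F}{\partial\bar u}(u) \;\xmapsto{\ \mathcal F^{-1}\ }\; G_{\bar q},
$$
so it suffices to show $u\mapsto \partial F/\partial\bar u$ maps a neighbourhood of $0$ in $W^{a,p}$ analytically into $W^{a,p}$ with the cubic bound.

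The main steps, in order, are: (i) Establish (or quote from \cite{kuksin1}) that $W^{a,p}$ is a Banach algebra: $|uv|^{a,p}\le c\,|u|^{a,p}|v|^{a,p}$, using that $\mathrm e^{a|j|}\le \mathrm e^{a|j-k|}\mathrm e^{a|k|}$ and $|j|^p\le c(|j-k|^p+|k|^p)$ so convolution is bounded, i.e. translate the statement to $|q*q'|^{a,p}\le c|q|^{a,p}|q'|^{a,p}$. (ii) Deduce that any monomial $u^j\bar u^k$ satisfies $|u^j\bar u^k|^{a,p}\le c^{j+k-1}(|u|^{a,p})^{j+k}$. (iii) Since $f$ is real analytic with $f(0)=0,\ f'(0)\ne 0$, write $f(|u|^2)u=\sum_{\ell\ge 1} a_\ell |u|^{2\ell} u$ with radius of convergence $R>0$; then for $|u|^{a,p}<\rho$ with $\rho$ small enough that $c\rho^2 < R$, the series $\sum_\ell a_\ell |u|^{2\ell}u$ converges absolutely in $W^{a,p}$ and defines a real analytic map, with
$$
\Big|\frac{\partial F}{\partial\bar u}(u)\Big|^{a,p}\le \sum_{\ell\ge1}|a_\ell|\,c^{2\ell}(|u|^{a,p})^{2\ell+1}\le c'\,(|u|^{a,p})^{3}
$$
for $|u|^{a,p}$ small, since the $\ell=1$ term dominates. (iv) Transport back through the (bounded) eigenfunction expansion $\mathcal F$: since $|\mathcal F q|^{a,p}=|q|^{a,p}$ by definition of $W^{a,p}$, the estimate $|G_{\bar q}|^{a,p}\le c(|q|^{a,p})^3$ follows, and analyticity is preserved because $\mathcal F,\mathcal F^{-1}$ are linear isometries.

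The main obstacle — and the only place genuine care is needed — is step (i)/(iii): one must make sure the algebra constant $c$ and the composition with the analytic function $f$ are compatible, i.e. that a single neighbourhood of the origin works, and that the spectral decomposition into eigenfunctions $\phi_j$ (rather than plain exponentials $\mathrm e^{i\langle j,x\rangle}$) still respects the weighted norm. For the torus/exponential case this is classical (\cite{kuksin1,posel1}); for a general bounded domain $\Omega$ with Dirichlet eigenfunctions, one uses that the eigenfunctions $\phi_j$ themselves lie in $W^{a,p}$ with controlled norms (a consequence of elliptic regularity and the Weyl asymptotics \eqref{tezhenggen}), so that products $\phi_i\phi_j$ expand in the $\phi$-basis with convolution-type coefficient bounds; this is exactly the content of \cite{peter} invoked earlier. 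Granting that, the cubic bound is immediate from the algebra property applied to $u\cdot u\cdot\bar u$ plus the higher-order tail, and analyticity from uniform convergence of the power series on a fixed ball. I would therefore present the proof as: algebra property $\Rightarrow$ monomial bounds $\Rightarrow$ sum the Taylor series of $f$ $\Rightarrow$ pull back by $\mathcal F$.
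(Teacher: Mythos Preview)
Your proposal is correct and follows the same approach as the paper, which simply records $|u|^{a,p}=|q|^{a,p}$ and then cites \cite{kuksin1} for the estimate $|f(|u|^2)u|^{a,p}\le c(|u|^{a,p})^3$; you have unpacked precisely the argument behind that citation (Banach-algebra property of $W^{a,p}$, Taylor expansion of $f$, monomial bounds, summation, transport by $\mathcal F$). One minor correction: the reference \cite{peter} (Li--Yau) is about eigenvalue asymptotics and does not provide the product/convolution bounds for Dirichlet eigenfunction expansions on a general domain that you attribute to it---this is a genuine technical point, but the paper's own proof carries the same unaddressed issue, since \cite{kuksin1} treats the one-dimensional/torus setting.
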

\begin{proof}~Consider a function $u\in W^{a,p}$ with the norm
$|u|^{a,p}=|q|^{a,p}$. The function $f(|u|^{2})u$ also belongs to
$W^{a,p}$ with
$$
|f(|u|^{2})u|^{a,p}\le c(|u|^{a,p})^{3},
$$
in a sufficiently small neighbourhood of the origin, for details,
see \cite{kuksin1}.\qquad\end{proof}

Consider the subspace $l^{2\bar{a}-a,2\bar{p}-p}\subset
l^{\bar{a},\bar{p}}\subset l^{a,p}$, and the corresponding function
subspaces $W^{2\bar{a}-a,2\bar{p}-p}\subset
W^{\bar{a},\bar{p}}\subset W^{a,p}\subset L^{2}$, where
$0<a<\bar{a},~0\le p\le\bar{p}$.

Let $u=\sum_{j}q_{j}\phi_j(x)$. If $u\in
 W^{\bar{a},\bar{p}}\subset W^{a,\bar{p}}$, then
according to Lemma 5.3, we have
\begin{eqnarray}\label{Gbarp}
|G_{\bar{q}}|^{\bar{a},\bar{p}}\le c (|q|^{\bar{a},\bar{p}})^{3},
\end{eqnarray}
and
$$
|q|^{2\bar{a}-a,2\bar{p}-p}\le C<\infty.
$$
By the Schwarz inequality,
\begin{eqnarray*}
(|q|^{\bar{a},\bar{p}})^{2}&=& \sum_{j}|q_j|^{2}[j]^{2\bar{p}}{\rm
e}^{2\bar{a}|j|}=\sum_{j}(|q_j|[j]^{p}{\rm
e}^{a|j|})(|q_j|[j]^{2\bar{p}-p}{\rm e}^{(\bar{a}-a)|j|})\\
&\le& \sqrt{\sum_{j}|q_j|^{2}[j]^{2p}{\rm
e}^{2a|j|}}\sqrt{\sum_{j}|q_j|^{2}[j]^{2\bar{p}-p}{\rm
e}^{(2\bar{a}-a)|j|}} \le C|q|^{a,p}.
\end{eqnarray*}
Combining with (\ref{Gbarp}) yields
$$
|G_{\bar{q}}|^{\bar{a},\bar{p}}\le c (|q|^{a,p})^{\frac32},
$$
where $c$ depends on $a,~\bar{a},~p,~\bar{p}$. The regularity of
$X_{G}$ follows from the regularity of $G_{\bar{q}},~G_{q}$.
Similarly, its Lipschtiz semi-norm with respect to $\xi$ follows.
Choose $s$ such that the perturbation satisfies A3).

In the following, we introduce the usual action-angle variables
$(\varphi,I)\in{\bf T}^{n}\times{\bf R}^{n}$ and
infinite-dimensional vector $z,~\bar{z}\in{\mathcal
{L}}^{a,p}\times{\mathcal {L}}^{a,p}$ (see \cite{geng}). Then,
systems (\ref{system1}) becomes
$$
\left\{\begin{array}{lllllllll}
\dot \varphi_{j} & =  \bar{\omega}_{j}(\xi)+G_{I_{j}},~~&j&\in\{1,\cdots,n\},\\
\dot I_{j} & = -G_{\varphi_{j}},~~&j&\in\{1,\cdots,n\},\\
\dot z_{j} & =(\bar{\Omega}_{j}(\xi)\bar{z}_{j}+G_{\bar{z}_{j}}),~~&j&\in N,\\
\dot{\bar{z}}_{j}&=-(\bar{\Omega}_{j}(\xi)z_{j}+G_{z_{j}}),~~&j&\in
N,
\end{array}\right.
$$
associated to the Hamiltonian
$$
H=\langle\bar{\omega}(\xi),I\rangle+\sum_{j\in
N}\bar{\Omega}_j(\xi)z_{j}\bar{z}_j+G(I,\varphi,z,\bar{z}),
$$
where $(x,y,z,\bar{z})$ lie in complex neighborhood
$$D_{a,p}(s,r)=\{(x,y,z,\bar{z}):|{\rm Im}~x|<r, |y|<s^2,
|z|^{a,p},|\bar{z}|^{a,p}<s\}$$of ${\bf T}^{n}\times \{0\}\times
\{0\}\times \{0\}\subset{\bf T}^{n}\times {\bf R}^{n}\times {\bf
\mathcal {L}}^{a,p}\times {\bf \mathcal {L}}^{a,p}$, and with
respect to the symplectic form
$$
\sum_{i=1}^{n}{\rm d}x_{i}\wedge {\rm d}y_{i}+\sum_{j\in N}{\rm
d}z_{j}\wedge {\rm d}\bar{z}_{j}.
$$

Note that the coefficient of $j^{2/m}$ can always be normalized to
one by rescaling. It is easily to prove that the frequences
$\bar{\omega}(\xi),~\bar{\Omega}_{j}(\xi)$ satisfy A1), A2)  after
rescaling. Using Theorem A,  we  get the following result:

\vspace{0.5cm} \noindent{\bf Theorem 5.1} {\it For any
$0<\gamma\le1$, there exists a Cantor set $\Pi_{\gamma}\subset\Pi$,
with $|\Pi\setminus\Pi_{\gamma}|= O(\gamma)$, such that for any
$\xi\in\Pi_{\gamma}$, Schr\"odinger equations (\ref{5.1}) subjected
to the boundary condition (\ref{5.2})
 admits a family of small amplitude quasi-periodic solutions
 $u_{*}(t,x)$ with respect to time $t$. Moreover
 $u_{*}(t,x)\in W_{b}^{a,\bar{p}}$ for fixed $t$.
 }

\vspace{0.5cm}
\subsection{the Klein-Gordon equations with exponential nonlinearity}
In this subsection, using Theorem A, we show that the existence of quasi-periodic solutions for the Klein-Gordon equations with exponential nonlinearity subject to Dirichlet boundary conditions
\begin{equation*}
u_{tt}-\bigtriangleup u+M_{\xi}u=ue^{\alpha u^2},~~x\in\Omega,~~t\in{\bf R},\label{wave}
\end{equation*}
\begin{equation}
u|_{\partial\Omega}=0,\label{6.2}
\end{equation}
where $M_{\xi}$ is a real Fourier multiplier,
\begin{eqnarray*}
M_{\xi}\sin jx=\xi_j\sin jx,~~\xi_j\in R^{\rho}.
\end{eqnarray*}
Exponential-type nonlinearities have been considered in several physical models (see, e.g., \cite{Lam} on a model of self-trapped beams in plasma).
S. Ibrahim, M. Majdoub and N. Masmoudi \cite{Ib1} has obtained
the existence of global solutions for the Klein-Gordon equations with exponential nonlinearity in two dimension. Recently, S. Ibrahim, M. Majdoub, N. Masmoudi and K. Nakanishi \cite{Ib2} investigated the existence and asymptotic completeness of the wave operators for the
nonlinear Klein-Gordon equation with a defocusing exponential nonlinearity in two
space dimensions. In this paper, we will prove the existence of quasi-periodic solutions for the Klein-Gordon equations with exponential nonlinearity.

Let
\begin{eqnarray*}
\frac{\partial F}{\partial u}:=u(e^{\alpha u^2}-1).
\end{eqnarray*}
Then we can rewrite the Klein-Gordon equations with exponential nonlinearity as
\begin{equation}
u_{tt}-\bigtriangleup u+(M_{\xi}-1)u=\frac{\partial F}{\partial u},\label{wave}
\end{equation}
In order to obtain the existence of quasi-periodic solutions,  we need assume:
\begin{itemize}
\item[{\bf I)}] Then operator $A=-\bigtriangleup+M_{\xi}-1$ under the
boundary condition (\ref{6.2}) admits the spectrum
\begin{eqnarray}
\omega_{\jmath}&=&\lambda_{{\imath}_{\jmath}}=|{\imath}_{\jmath}|+\xi_j-1,~~~~~~~~~1\le\jmath\le n,\nonumber\\
\Omega_{j}&=&\lambda_j=|j|+o(|j|^{-1}),~~j\in
Z^{\rho}\setminus\{\imath_1,\cdots,\imath_n\}.\nonumber
\end{eqnarray}
The corresponding eigenfunctions
$\phi_j=\frac{1}{\sqrt{(2\pi)^{\rho}}}e^{\sqrt{-1}}\langle
j,x\rangle$ form a basis. Without loss of generality, we assume
$0\in\{\imath_1,\cdots,\imath_n\}$.
\end{itemize}

Introduce $v=u_t$, and (\ref{wave}) reads
$$
\left\{\begin{array}{lllllllll}
 u_{t}&=&v,\\
 v_{t}&=&-Au-\frac{\partial
F}{\partial u},
\end{array}\right.
$$
with the associated to the Hamiltonian
$$
H=\frac12(\langle v,v\rangle+\langle Au,u\rangle)+\int_{{\bf
T}^{\rho}}F(u)dx.
$$
Consider the coordinate transformation:
\begin{eqnarray*}
u&=&\sum_{j\in
Z^{\rho}}\frac{q_{j}}{\sqrt{\lambda_{j}}}\phi_j(x),~v=\sum_{j\in
Z^{\rho}}\sqrt{\lambda_{j}}p_{j}\phi_j(x),
\end{eqnarray*}
where $\phi_j(x)$ is the eigenvector corresponding to eigenvalue
$\lambda_j$. Then (\ref{wave}) can be transformed to
\begin{equation}\label{system1}
\dot{p_{j}}=-\frac{\partial H}{\partial
q_{j}},~~\dot{q_{j}}=\frac{\partial H}{\partial p_{j}},~~j\in
Z^{\rho},
\end{equation}
and  the corresponding Hamiltonian is
$$
H=\frac12\sum_{j}\lambda_j(p^{2}_{j}+q^{2}_{j})+\int_{{\bf
T}^{\rho}}F(\sum_{j}\frac{q_n}{\sqrt{\lambda_j}}){\rm d}x.
$$

Introduce complex variables
$w_j=\frac{1}{\sqrt{2}}(q_j+\sqrt{-1}p_j),~\bar{w}_n=\frac{1}{\sqrt{2}}(q_j-\sqrt{-1}p_j)$.
Then the perturbation of equation (\ref{system1}) reads
\begin{equation}\label{GWW}
G(w,\bar{w})\equiv\int_{{\bf T}^{\rho}}F(\sum_{j\in
Z^{\rho}}\frac{w_j\phi_j+\bar{w}_j\bar{\phi}_j}{\sqrt{2\lambda_j}}){\rm
d}x.
\end{equation}
Next using usual action-angle variables $(\varphi,I)\in{\bf
T}^{\rho}\times{\bf R}^{\rho}$ and infinite-dimensional vector
$w_j=z_j,~\bar{w}_j=\bar{z}_j,~j\ne\imath_1,\cdots,\imath_n$. Then,
systems (\ref{system1}) becomes
$$
\left\{\begin{array}{llllllll} \dot \varphi_{\jmath} &=&
\omega_{\jmath}(\xi)+G_{I_{j}},
&~&\dot I_{\jmath}=-G_{\varphi_{\jmath}},~~&\jmath&\in\{\imath_1,\cdots,\imath_n\}\\
\dot z_{j}&=&(\Omega_{j}(\xi)\bar{z}_{j}+G_{\bar{z}_{j}}),~~
&~&\dot{\bar{z}}_{j}=-(\Omega_{j}(\xi)z_{j}+G_{z_{j}}),~~&j&\in
Z^{\rho}_1,
\end{array}\right.
$$
associated to the Hamiltonian
\begin{equation}\label{hamilton1}
H=\langle\omega(\xi),I\rangle+\sum_{j\in
Z_1^{\rho}}\Omega_j(\xi)z_{j}\bar{z}_j+G(I,\varphi,z,\bar{z}),
\end{equation}
where $(x,y,z,\bar{z})$ lies in complex neighborhood
$$D_{a,p}(s,r)=\{(x,y,z,\bar{z}):|{\rm Im}~x|<r, |y|<s^2,
|z|^{a,p},|\bar{z}|^{a,p}<s\}$$of ${\bf T}^{n}\times \{0\}\times
\{0\}\times \{0\}\subset{\bf T}^{n}\times {\bf R}^{n}\times {\bf
\mathcal {L}}^{a,p}\times {\bf \mathcal {L}}^{a,p}$, and with
respect to the symplectic form
$$
\sum_{i=1}^{n}{\rm d}x_{i}\wedge {\rm d}y_{i}+\sum_{j\in
Z_1^{\rho}}{\rm d}z_{j}\wedge {\rm d}\bar{z}_{j}.
$$

To apply our main theorem, we need to verify the perturbation $P$
satisfies A3)' and A4)'. (see section 1)

 Introduce the  Hilbert space $l^{2}=(\cdots,l_n,\cdots)_{n\in Z^{\rho}}$
and $L^{2}$ denotes all square-integrable complex-valued functions
on ${\bf T}^{\rho}$, by the inverse Fourier transform
$$
{\mathcal {F}}:~~q\rightarrow{\mathcal
{F}}q=\frac{1}{\sqrt{{2\pi}^{\rho}}}\sum_{j}q_j\phi_j(x).
$$

Let $a\ge0,~p\ge0$. The subspace $l^{a,p}\subset l^2$ consist of
sequences with finite form
$$
(|q|^{a,p})^{2}=|q_0|^{2}+\sum_{j}|q_j||j|^{2p} {\rm
e}^{2|j|a}<\infty.
$$
Denote function space $W^{a,p}\subset L^{2}$ normed by $|{\mathcal
{F}}q|^{a,p}=|q|^{a,p}$. For $a>0$, the space $W^{a,p}$ may be
analytic functions bounded in the complex strip $|{\rm Im}x|<a$ with
trace functions on $|{\rm Im}x|=a$ belonging to the usual Sobolev
space $W^{p}$. (see \cite{kuksin1})

Consider the subspace $l^{2\bar{a}-a,p}\subset l^{\bar{a},p}\subset
l^{a,p}$, and $u\in W^{2\bar{a}-a,p+\varsigma}\subset
W^{\bar{a},p+\varsigma}\subset W^{a,p+\varsigma}$ respectively with
the norm above, where $\bar{a}>a>0,~p\ge0$.

\begin{lemma}~ For any fixed $a\ge0,~p\ge0$, the gradient $G_{\bar{q}}$ is
real analytic as a map in a neighborhood of the origin with
\begin{eqnarray}
|G_{\bar{q}}|^{a,p+\varsigma}\le c (|q|^{a,p})^{3},
\end{eqnarray}
where $\varsigma\le\frac{1}{2}$.
\end{lemma}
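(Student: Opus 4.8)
The plan is to mirror the proof of the Schr\"odinger analogue proved above, the only genuinely new point being the bookkeeping of the weights $\lambda_j^{-1/2}$ coming from the substitution $u=\sum_{j\in Z^{\rho}}\frac{q_j\phi_j+\bar q_j\bar\phi_j}{\sqrt{2\lambda_j}}$ in (\ref{GWW}); write $\langle j\rangle=\max\{|j|,1\}$. First I would record the spectral bound: by assumption I), $\lambda_j=|j|+o(|j|^{-1})$, so there are constants $0<c_*\le c^*$ with $c_*\langle j\rangle\le\lambda_j\le c^*\langle j\rangle$ for every $j$. Consequently, whenever $\varsigma\le\frac12$,
$$\langle j\rangle^{2(p+\varsigma)}\lambda_j^{-1}\le c\,\langle j\rangle^{2p+2\varsigma-1}\le c\,\langle j\rangle^{2p},$$
so the linear substitution $q\mapsto u$ sends $l^{a,p}$ boundedly into $W^{a,p+\varsigma}$, with $|u|^{a,p+\varsigma}\le c\,|q|^{a,p}$. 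That is, dividing the Fourier coefficients by $\sqrt{\lambda_j}\sim\langle j\rangle^{1/2}$ gains up to half a derivative, which is exactly the admissible range $\varsigma\le\frac12$.

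Next I would treat the nonlinearity. Since $\frac{\partial F}{\partial u}=u({\rm e}^{\alpha u^{2}}-1)=\alpha u^{3}+\tfrac{\alpha^{2}}{2}u^{5}+\cdots$, the function $F$ is entire in $u$ with a zero of order $4$ at the origin (so $F'$ vanishes to order $3$). For $a>0$ — the only case needed in the KAM scheme — the space $W^{a,p+\varsigma}$ is a Banach algebra under pointwise multiplication; hence, exactly as for the Schr\"odinger nonlinearity (see \cite{kuksin1}), the Nemytskii operator $u\mapsto F'(u)$ is real analytic from a neighbourhood of the origin in $W^{a,p+\varsigma}$ into $W^{a,p+\varsigma}$, and satisfies $|F'(u)|^{a,p+\varsigma}\le c\,(|u|^{a,p+\varsigma})^{3}$ there.

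Finally I would assemble the estimate. Differentiating (\ref{GWW}) gives $(G_{\bar q})_j=\dfrac{\partial G}{\partial\bar q_j}=\dfrac{1}{\sqrt{2\lambda_j}}\displaystyle\int_{{\bf T}^{\rho}}F'(u)\,\bar\phi_j\,{\rm d}x$, so $(G_{\bar q})_j$ is $\lambda_j^{-1/2}$ times (a fixed multiple of) the $j$-th Fourier coefficient of $F'(u)$. Since $\lambda_j^{-1}$ is bounded,
$$(|G_{\bar q}|^{a,p+\varsigma})^{2}=\sum_{j}\frac{1}{2\lambda_j}\Big|\int_{{\bf T}^{\rho}}F'(u)\bar\phi_j\,{\rm d}x\Big|^{2}\langle j\rangle^{2(p+\varsigma)}{\rm e}^{2a|j|}\le c\,(|F'(u)|^{a,p+\varsigma})^{2}\le c\,(|u|^{a,p+\varsigma})^{6}\le c\,(|q|^{a,p})^{6},$$
which is the claimed bound $|G_{\bar q}|^{a,p+\varsigma}\le c\,(|q|^{a,p})^{3}$; real analyticity of $q\mapsto G_{\bar q}$ follows by composing the bounded linear map $q\mapsto u$, the analytic map $u\mapsto F'(u)$, and the bounded linear map $F'(u)\mapsto G_{\bar q}$. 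The step I expect to demand the most care is the middle one: verifying that, after the substitution, the exponential nonlinearity still yields a Nemytskii operator that is analytic and cubically small on $W^{a,p+\varsigma}$ — this rests on the algebra property of $W^{a,p+\varsigma}$ for $a>0$ together with the order-$4$ vanishing of $F$ at the origin, and it is where one must be sure the exponential growth of $F$ causes no trouble in a small enough neighbourhood.
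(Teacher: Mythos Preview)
Your proposal is correct and follows essentially the same route as the paper. The paper's own proof is the single observation that the substitution $q\mapsto u=\sum_j q_j\phi_j/\sqrt{\lambda_j}$ gains half a derivative (so $|u|^{a,p+\varsigma}\asymp|q|^{a,p}$ for $\varsigma\le\tfrac12$), followed by a reference to P\"oschel \cite{posel2}; you have simply unpacked that reference, making explicit the Banach-algebra property of $W^{a,p+\varsigma}$, the cubic vanishing of $F'$, and the harmless extra factor $\lambda_j^{-1/2}$ in passing from $F'(u)$ back to $G_{\bar q}$.
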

\begin{proof}~Let $q\in l^{a,p}$. Then
$u=\sum_{j}\frac{q_{j}}{\lambda_{j}}\phi_j$ is in
$W^{a,p+\varsigma}$ with $|u|^{a,p+\varsigma}=|q|^{a,p}$. One can
see \cite{posel2} for details.\qquad\end{proof}

\begin{lemma}
If $u\in W^{2\bar{a}-a,p+\varsigma}\subset
W^{\bar{a},p+\varsigma}$, we have
\begin{equation}\label{Gbarp2}
|G_{\bar{q}}|^{\bar{a},p+\varsigma}\le c (|q|^{a,p})^{\frac32},
\end{equation}
where $c$ depends on $a,~\bar{a},~p,~\bar{p}$.
\end{lemma}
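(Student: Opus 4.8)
The plan is to mirror, almost word for word, the interpolation argument carried out just above for the Schr\"odinger case (the computation preceding Theorem~5.1), only now with Lemma~5.5 playing the role that Lemma~5.3 played there. The first point is that Lemma~5.5 holds equally with $\bar a$ in place of $a$: for $u\in W^{\bar a,p+\varsigma}$ in a small neighbourhood of the origin one has $|G_{\bar q}|^{\bar a,p+\varsigma}\le c(|q|^{\bar a,p})^3$. This is the identical computation, since the proof uses only that $\bar a>0$ makes $W^{\bar a,p+\varsigma}$ a Banach algebra under pointwise multiplication (the exponential weight forces the Fourier coefficients to decay, so $p+\varsigma\ge0$ suffices), that the Taylor expansion $u(e^{\alpha u^2}-1)=\sum_{k\ge1}\frac{\alpha^k}{k!}u^{2k+1}$ converges in the algebra norm once $|u|^{\bar a,p+\varsigma}$ is small, and that the map $q\mapsto u=\sum_j\lambda_j^{-1/2}q_j\phi_j$ carries $l^{\bar a,p}$ into $W^{\bar a,p+\varsigma}$ isometrically because $\lambda_j=|j|+o(|j|^{-1})$ gains half a derivative (hence $\varsigma\le 1/2$); see \cite{posel2}.

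Next I would interpolate between the three nested spaces $l^{2\bar a-a,p}\subset l^{\bar a,p}\subset l^{a,p}$. Since $u\in W^{2\bar a-a,p+\varsigma}$ stays in a fixed small-amplitude neighbourhood, $|q|^{2\bar a-a,p}\le C<\infty$. Writing the weight as $e^{2\bar a|j|}=e^{a|j|}\,e^{(2\bar a-a)|j|}$ and applying the Cauchy--Schwarz inequality with $a_j=|q_j|\,|j|^{p}e^{a|j|}$ and $b_j=|q_j|\,|j|^{p}e^{(2\bar a-a)|j|}$ gives
\[
(|q|^{\bar a,p})^{2}=\sum_j a_jb_j\le\Big(\sum_j a_j^{2}\Big)^{1/2}\Big(\sum_j b_j^{2}\Big)^{1/2}=|q|^{a,p}\,|q|^{2\bar a-a,p}\le C\,|q|^{a,p}.
\]
Combining this with the cubic bound from the previous paragraph yields
\[
|G_{\bar q}|^{\bar a,p+\varsigma}\le c(|q|^{\bar a,p})^{3}=c\big((|q|^{\bar a,p})^{2}\big)^{3/2}\le cC^{3/2}\,(|q|^{a,p})^{3/2},
\]
which is exactly the claimed estimate with a constant depending only on $a,\bar a,p,\bar p$. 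The regularity of $X_G$ and its Lipschitz seminorm in $\xi$ then follow by applying the same estimates to $G_{\bar q}$ and $G_{q}$ (the $\xi$-dependence entering only through the Fourier multiplier $M_\xi$, which is Lipschitz in $\xi$), exactly as in the Schr\"odinger subsection; one then shrinks $s$ so that $P=G$ satisfies A3)', while A4)' is automatic because the equation contains neither $x$ nor $t$ explicitly (momentum conservation).

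I expect the only genuinely non-routine ingredient to be the upgraded form of Lemma~5.5 used in the first paragraph, i.e. the analyticity of the exponential Nemytskii operator $u\mapsto u(e^{\alpha u^2}-1)$ as a map from a neighbourhood of $0$ in $W^{\bar a,p+\varsigma}$ into itself with cubic leading term: this is where the Banach-algebra property and the norm-convergence of the exponential series must be invoked, and it is precisely the input borrowed from \cite{posel2}. The second and third steps are then pure bookkeeping, identical to the Schr\"odinger case.
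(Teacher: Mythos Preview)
Your proposal is correct and follows essentially the same line as the paper: apply the cubic bound of the preceding lemma with $\bar a$ in place of $a$, then use the Cauchy--Schwarz interpolation $(|q|^{\bar a,p})^{2}\le |q|^{a,p}\,|q|^{2\bar a-a,p}\le C\,|q|^{a,p}$ to convert the cubic estimate into the desired $3/2$-power bound. If anything, you spell out more carefully than the paper does why the cubic bound transfers to the $\bar a$-scale (Banach-algebra property of $W^{\bar a,p+\varsigma}$ and convergence of the exponential series), which the paper leaves implicit.
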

\begin{proof} According to Lemma 5.1 and the Schwarz inequality,
\begin{eqnarray*}
(|q|^{\bar{a},p})^{2}&=& \sum_{j}|q_j|^{2}[j]^{2p}{\rm
e}^{2\bar{a}|j|}=\sum_{j}(|q_j|[j]^{p}{\rm
e}^{a|j|})(|q_j|[j]^{p}{\rm e}^{(\bar{a}-a)|j|})\\
&\le& \sqrt{\sum_{j}|q_j|^{2}[j]^{2p}{\rm
e}^{2a|j|}}\sqrt{\sum_{j}|q_j|^{2}[j]^{2p}{\rm
e}^{(2\bar{a}-a)|j|}}.
\end{eqnarray*}

Since $q\in l^{2\bar{a}-a,p}$, we have
$$
\sqrt{\sum_{j}|q_j|^{2}[j]^{2p}{\rm e}^{(2\bar{a}-a)|j|}}\le
c(a,\bar{a},p,\bar{p}),
$$
then (\ref{Gbarp2}) is verified.\qquad\end{proof}

\noindent {\bf Remark} The regularity of perturbation $X_{G}$
depends not only on the order of $u$ but also on the weight of the
function space $W^{a,p}$. In previous, one assumed $u\in
W^{a,\bar{p}}$ and proved that the quasi-periodic solution $u_{*}$
of the perturbed system has the finite $a,\bar{p}$-norm for fixed
$t$. In our paper, we demand $u\in W^{2\bar{a}-a,\bar{p}}$, and
prove that the perturbed quasi-periodic solution $u_{*}$ is in
$W^{a,\bar{p}}$. The more rigorous regularity of the perturbation is
to overcome the continuous spectrum. This is just intrinsic.

The regularity of $X_{G}$ follows from the regularity of
$G_{\bar{q}},~G_{q}$. Similarly, its Lipschtiz semi-norm follows.
Choosing the parameter $s$,  A3) is verified.

Then we show that $P$ has the special form.
Since $e^{\alpha u^2}$ is real analytic in $u$, making use of $q(x)=\sum_{j\in
Z^{\rho}}q_n\phi_n(x)$, $F$ can be rewritten as
$$
F(w,\bar{w})=\sum_{\alpha,\beta}F_{\alpha,\beta}w^{\alpha}\bar{w}^{\beta}\phi^{\alpha}\bar{\phi}^{\beta},
$$
hence
\begin{eqnarray}
G(w,\bar{w})&\equiv&\int_{{\bf T}^{\rho}}F(\sum_{j\in
Z^{\rho}}\frac{w_j\phi_j+\bar{w}_j\bar{\phi}_j}{\sqrt{2\lambda_j}}){\rm
d}x\nonumber\\
&=&\sum_{\alpha,\beta}G_{\alpha\beta}\int_{{\bf T}^{\rho}}{\rm
e}^{\sqrt{-1}\langle(\sum_{j\in
Z^{\rho}}\alpha_j-\beta_j)j,x\rangle}{\rm d}x,\nonumber
\end{eqnarray}
that is
$$
G_{\alpha\beta}=0,~~{\rm if}~\sum_{j}(\alpha_j-\beta_j)j\ne0.
$$

Denote
$k=(k_1,\cdots,k_n),~k_{\jmath}=\alpha_{\jmath}-\beta_{\jmath},~1\le\jmath\le
n$. Then
\begin{eqnarray}
&~&G=\sum_{\sum_{j\in
Z^{\rho}}\sum_{j}(\alpha_j-\beta_j)j\ne0}G_{\alpha\beta}w^{\alpha}\bar{w}^{\beta}\label{GOO}\\
&=&\sum_{\sum_{{\jmath}=1}^{n}(\alpha_{\jmath}-\beta_{\jmath}){\imath}_{\jmath}+\sum_{j\in
Z_{1}^{\rho}}(\alpha_j-\beta_j)j\ne0}G_{\alpha\beta}q^{\alpha_{{\imath}_1}}_{{\imath}_1}
\bar{q}^{\beta_{{\imath}_1}}_{{\imath}_1}\cdots
q^{\alpha-\sum_{\jmath=1}^{n}\alpha_{{\imath}_{\jmath}}e_{\imath_{\jmath}}}
\bar{q}^{\beta-\sum_{\jmath=1}^{n}\beta_{{\imath}_{\jmath}}e_{\imath_{\jmath}}}\nonumber\\
&\equiv&\sum_{\sum_{{\jmath}=1}^{n}k_{\jmath}\imath_{\jmath}+\sum_{j\in
Z_{1}^{\rho}}(\alpha_j-\beta_j)j\ne0}P_{km\alpha\beta}z^{\alpha}\bar{z}^{\beta}\equiv
P,\nonumber
\end{eqnarray}
that is, $P\in {\mathcal {A}}$. Therefore, Hamiltonian PDEs in $\Omega$ not containing explicitly the space and time variable do
have the special form.

Thus we have verified A1)'-A4)'. Then we have the following result
for the Klein-Gordon equations with exponential nonlinearity.
 \vspace{0.5cm}

 \noindent{\bf Theorem 5.2} {\it Under the assumptions
{\rm I)}, for any $0<\gamma\le1$, there exists a Cantor set
$\Pi_{\gamma}\subset\Pi$, with $|\Pi\setminus\Pi_{\gamma}|=
O(\gamma)$, such that for any $\xi\in\Pi_{\gamma}$, wave equations
(\ref{wave}) subjected to the boundary condition (\ref{6.2})
 admits a family of small amplitude quasi-periodic solutions
 $u_{*}(t,x)$ with respect to time $t$. Moreover
 $u_{*}(t,x)\in W_{b}^{a,\bar{p}}$ for fixed $t$.
 }

\end{document}